\documentclass[reqno, 12pt]{amsart}
\usepackage[margin=1.25in]{geometry}
\usepackage{amsthm, amsmath, amsfonts, amssymb}
\usepackage{enumerate, mathabx}

\usepackage{color}

\newcommand{\Z}[0]{\mathbb{Z}}
\newcommand{\R}[0]{\mathbb{R}}
\newcommand{\Q}[0]{\mathbb{Q}}
\newcommand{\C}[0]{\mathbb{C}}
\newcommand{\N}[0]{\mathbb{N}}

\newcommand{\ta}[0]{\theta}
\newcommand{\bs}[0]{\setminus}
\newcommand{\ld}[0]{\lambda}
\newcommand{\vep}[0]{\varepsilon}
\newcommand{\supp}[0]{\operatorname{supp}}
\newcommand{\lsm}[0]{\lesssim}

\newcommand{\ow}[0]{\overline{w}}
\newcommand{\avg}[1]{\underset{#1}{\operatorname{Avg}}}

\newcommand{\mb}[1]{\mathbf{#1}}
\newcommand{\mf}[1]{\mathfrak{#1}}
\newcommand{\wh}[1]{\widehat{#1}}

\newcommand{\mc}[1]{\mathcal{#1}}
\newcommand{\ov}[1]{\overline{#1}}
\newcommand{\wt}[1]{\widetilde{#1}}
\newcommand{\st}[1]{\substack{#1}}

\newcommand{\nms}[1]{\| #1 \|}

\newcommand{\geom}[0]{\operatorname{geom}}

\newcommand{\E}[0]{\mathcal{E}}

\newtheorem{thm}{Theorem}[section]
\newtheorem{lemma}[thm]{Lemma}
\newtheorem{prop}[thm]{Proposition}
\newtheorem{cor}[thm]{Corollary}

\theoremstyle{remark}
\newtheorem{rem}[thm]{Remark}

\clubpenalty=1000
\widowpenalty=1000

\title{Effective $l^2$ decoupling for the parabola}
\author[Zane Kun Li]{Zane Kun Li\\(with an appendix by Jean Bourgain and Zane Kun Li)}
\address{Department of Mathematics,
Indiana University\\
831 East 3rd St\\
Bloomington IN 47405\\
USA}
\email{zkli@iu.edu}

\begin{document}
\begin{abstract}
We make effective $l^2 L^p$ decoupling for the parabola in the range $4 < p < 6$.
In an appendix joint with Jean Bourgain, we apply the main theorem to prove
the conjectural bound for the sixth-order correlation of the integer solutions
of the equation $x^2 + y^2 = m$ in an extremal case.
This proves unconditionally a result that was proven in \cite{bombieribourgain}
under the hypotheses of the Birch and Swinnerton-Dyer conjecture and the Riemann Hypothesis for $L$-functions of
elliptic curves over $\mathbb{Q}$.
\end{abstract}

\maketitle
\section{Introduction}\label{intro}
For each interval $J \subset [0, 1]$ and $g: [0, 1] \rightarrow \C$, let
\begin{align*}
(\E_{J}g)(x) := \int_{J}g(\xi)e(\xi x_1 + \xi^{2}x_2)\, d\xi
\end{align*}
where $e(z) = e^{2\pi i z}$.
For a square $B = B(c, R) \subset \R^2$ centered at $c$ of side length $R$,
let $$w_{B}(x) := (1 + \frac{|x - c|}{R})^{-100}.$$
If $I$ is an interval in $[0, 1]$ and $\delta \in (0, 1)$, let $P_{\delta}(I)$
be the partition of $I$ into $|I|/\delta$ many intervals of length $\delta$.
Note that when writing $P_{\delta}(I)$, we assume $|I|/\delta \in \N$.
For $\delta \in \N^{-1}$ and $p \geq 1$, let $D_{p}(\delta)$ be the best constant such that
\begin{align}\label{decdelta}
\nms{\E_{[0, 1]}g}_{L^{p}(B)} \leq D_{p}(\delta)(\sum_{J \in P_{\delta}([0, 1])}\nms{\E_{J}g}_{L^{p}(w_B)}^{2})^{1/2}
\end{align}
for all $g: [0, 1] \rightarrow \C$ and all (axis-parallel) squares $B \subset \R^2$ of side length $\delta^{-2}$.
Observe that trivially we have that $D_{p}(\delta)\leq 2^{100/p}\delta^{-1/2}$ and $D_{2}(\delta) \lsm 1$ from Plancherel.

Bourgain and Demeter showed in \cite{bd} that $D_{p}(\delta) \lsm_{\vep, p} \delta^{-\vep}$ for $2 \leq p \leq 6$
and this range of $p$ is sharp up to a $\delta^{-\vep}$-loss (in fact they showed that the decoupling constant for the paraboloid in $\R^d$ is
$O_{p,\vep}(\delta^{-\vep})$ for $2 \leq p \leq \frac{2(d + 1)}{d - 1}$).
The main result of this paper is to obtain the first known improvements beyond a bound of $O_{\vep, p}(\delta^{-\vep})$ in the range $4 < p < 6$.
\begin{thm}\label{main}
If $4 < p < 6$ and $\delta \in \N^{-1}$, then for some sufficiently large absolute constant $C$ we have
\begin{align*}
D_{p}(\delta) \leq \exp(C(\log\frac{1}{\delta})^{\frac{3}{4} + \frac{1}{4}\log_{2}(\frac{p - 2}{2})}\log\log\frac{1}{\delta}).
\end{align*}
\end{thm}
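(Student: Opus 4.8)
The plan is to run the standard Bourgain–Demeter bilinear-to-linear reduction, but tracking constants at every stage so that the iteration closes with an explicit, quasi-polynomial (in $\log\frac{1}{\delta}$) bound rather than merely $O_\varepsilon(\delta^{-\varepsilon})$. First I would record the basic properties of $D_p(\delta)$: the trivial bounds $D_p(\delta)\le 2^{100/p}\delta^{-1/2}$ and $D_2(\delta)\lsm 1$ already noted, together with (a) the \emph{multiplicativity/submultiplicativity} estimate $D_p(\delta_1\delta_2)\lsm D_p(\delta_1)D_p(\delta_2)$ coming from rescaling and parabolic affine invariance (decoupling each $\delta_1$-cap into $\delta_2/\delta_1$-subcaps via an affine image of the $\delta_2$-decoupling inequality), and (b) interpolation in $p$ via Hölder, giving $D_p(\delta)\le D_4(\delta)^{1-\theta}D_6(\delta)^\theta$ with $\frac1p=\frac{1-\theta}{4}+\frac{\theta}{6}$, equivalently controlling $D_p$ by a power of $D_6$ and the (essentially trivial, constant) $D_4$. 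The exponent $\frac34+\frac14\log_2\frac{p-2}{2}$ is exactly what one gets from feeding the $p=6$ exponent $\frac34$ through this interpolation, so the real work is the case $p=6$ (or, more precisely, $p$ slightly below $6$); the strict inequality $p<6$ is what keeps certain geometric losses — coming from transversality and from the number of scales — under control.

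The heart is the effective version of the Bourgain–Demeter induction on scales at the endpoint. I would set $K$ to be a large parameter (to be optimized, roughly $K\sim\exp((\log\frac1\delta)^{c})$), decompose $[0,1]$ into $K$ intervals of length $1/K$, and split $\E_{[0,1]}g=\sum_\tau \E_\tau g$. On any ball $B$ of radius $\delta^{-2}$ one separates the ``broad'' part, where at least two well-separated caps $\tau,\tau'$ contribute comparably, from the ``narrow'' part, where the mass concentrates on $O(1)$ neighbouring caps; the narrow part is handled by parabolic rescaling of each fixed cap and an application of $D_p$ at scale $K\delta$, contributing a factor $D_p(K\delta)$ (this is the step that feeds the recursion). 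For the broad part one uses the \emph{bilinear} (more precisely $K$-broad, or two-parameter) estimate: by the refined $L^4$/bilinear restriction estimate for transverse pieces of the parabola — whose constant is a fixed power of $K$, hence $\exp(O(\log K))$ — together with flat/$L^2$ decoupling and Bernstein, one proves a bilinear decoupling inequality with constant $C(K)\lsm K^{O(1)}$ and no $\delta$-loss. I would carry out the standard ``$\ell^2$ over caps'' bookkeeping (Minkowski, Cauchy–Schwarz over the $O(K^2)$ pairs of caps, and the elementary inequality $\|f\|_p\lsm \|f\|_{p,\mathrm{broad}}+ (\text{narrow terms})$) to obtain a closed inequality of the shape
\[
D_p(\delta)\ \le\ C(K)\,D_p(\delta)^{1-\beta}\,D_p(K\delta)^{\beta}\ +\ C\,D_p(K\delta),
\]
for a constant $\beta=\beta(p)\in(0,1)$ with $\beta\to 1$ as $p\to 6$ from below (here is where $p<6$ enters: it makes $1-\beta>0$, so the $D_p(\delta)$ on the right can be absorbed/iterated). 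Rearranging gives $D_p(\delta)\le C(K)^{1/\beta}D_p(K\delta)$.

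Finally I would unwind the recursion. Writing $\delta=K^{-N}$ (so $N=\log\frac1\delta/\log K$ scales), iterating $D_p(\delta)\le C(K)^{1/\beta}D_p(K\delta)$ down to a scale where $D_p$ is $O(1)$ yields
\[
D_p(\delta)\ \le\ \big(C(K)^{1/\beta}\big)^{N}\ \lsm\ \exp\!\Big(O(1)\cdot N\log K\Big)\ =\ \exp\!\Big(O\big(\log\tfrac1\delta\big)\Big),
\]
which is far too lossy as stated; the improvement comes from \emph{not} using a single bilinear step but iterating the broad/narrow decomposition $m$ times before invoking restriction, so that the transversality constant improves to $K^{O(1/m)}$-type behaviour and the number of scales one must pay for drops. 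Optimizing $K$ (and the number of nested broad steps) against the two competing costs — the per-scale factor $C(K)^{1/\beta}\approx \exp(O(\log K))$ versus the scale count $N\approx \log\frac1\delta/\log K$, refined by the multi-step gain — produces the balance $\log K\sim (\log\frac1\delta)^{1/4}(\log\log\frac1\delta)$ and the stated exponent $\frac34+\frac14\log_2\frac{p-2}{2}$ after the Hölder interpolation from $6$ down to $p$; the $\log\log\frac1\delta$ is the usual artifact of summing the geometric-type series and of the number of iterations of the broad decomposition. \textbf{Main obstacle.} The delicate point is controlling \emph{all} constants uniformly in $K$ through the bilinear/$K$-broad argument — in particular the parabolic-rescaling steps and the flat $L^2$-decoupling inputs must contribute only $K^{O(1)}$ (not $\exp(c(\log K)^2)$ or worse), and the number of scales lost to passing between $\delta$, $K\delta$, and the $K$-broad geometry must be genuinely $O(N)$ with an absolute implied constant; getting the \emph{right} power of $\log\frac1\delta$ (namely $3/4$, not $1$) forces one to be careful that the transversality loss is only felt logarithmically and to iterate the broad step the correct number of times before terminating.
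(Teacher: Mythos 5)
Your proposal is built around a Bourgain--Guth style broad/narrow decomposition with a bilinear restriction input, whereas the paper instead bilinearizes once (Lemma~\ref{reduction}) and then iterates a ball-inflation lemma (Lemma~\ref{ballinflation}) in the Bourgain--Demeter--Guth ``study guide'' style, producing the doubly-recursive inequality of Corollary~\ref{rcor}. That by itself is a legitimate difference in strategy, but there are two places where your outline has a genuine gap rather than a stylistic choice. First, the claim that you can work at $p=6$ and then interpolate via $D_p\le D_4^{1-\theta}D_6^{\theta}$ does not produce the stated exponent: interpolation only rescales the constant in the exponential, so if $D_6(\delta)\le\exp(c(\log\frac1\delta)^{\gamma}\log\log\frac1\delta)$ you would get $D_p(\delta)\le\exp(c\theta(\log\frac1\delta)^{\gamma}\log\log\frac1\delta)$ with the \emph{same} $\gamma$, not the $p$-dependent exponent $\frac34+\frac14\log_2\frac{p-2}{2}$. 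In the paper the exponent $\frac34$ comes from balancing $C^{8^N}$ against $\delta^{-N/2^{N+1}}$ by choosing $2^N\sim(\log\frac1\delta)^{1/4}$, and the additional $\frac14\log_2\frac{p-2}{2}$ arises from the factor $((p-2)/2)^{N+1}$ that appears when one divides by the exponent $1-(2/(p-2))^{N+1}$ on the $D_p(\delta)$ term; this factor is an honest power of $\log\frac1\delta$ and has nothing to do with H\"older interpolation in $p$. (Also note that at $p=6$ the displayed exponent equals $1$, not $3/4$, so it is not ``the $p=6$ exponent fed through interpolation.'')

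Second, the mechanism you propose for beating the too-lossy single-step recursion -- iterating the broad/narrow decomposition so that ``the transversality constant improves to $K^{O(1/m)}$'' -- is not the one that actually closes the argument, and as stated it is not clear it gives any gain. In the paper's Lemma~\ref{mpup} the per-step cost of ball inflation is a fixed factor $\nu^{-1/p}(\log\frac1{\nu^b})^{1/2}\exp(O(p))$, which does \emph{not} improve with further iteration; what improves is the \emph{exponent} $(1-\alpha)^{N+1}=(\frac{2}{p-2})^{N+1}$ on the surviving bilinear/decoupling term after $N$ ball inflations, while the accumulated power of $\nu^{-1}$ is only $\nu^{-O(N)/p}$ with $\nu=\delta^{1/2^{N+1}}$, hence a $\delta^{-O(N)/2^{N+1}}$ total loss. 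It is this double-exponential separation ($2^{N+1}$ in the cost denominator versus $(\frac{2}{p-2})^{N+1}$ in the surviving exponent) that produces a quasi-polynomial bound, and it requires the ``$b\to 2b$'' scale-doubling structure of the bilinear constant $\mb{M}_{p,b}$, which your recursion $D_p(\delta)\le C(K)D_p(\delta)^{1-\beta}D_p(K\delta)^{\beta}+CD_p(K\delta)$ does not encode. Your ``main obstacle'' paragraph is pointing in the right direction -- one does have to control all constants with only $\exp(O(\log K))$-type losses -- but without the ball-inflation bookkeeping (or some equivalent mechanism that shrinks the residual exponent geometrically) the recursion you write down unwinds to $\exp(O(\log\frac1\delta))$, as you yourself observe, and your sketched repair is not precise enough to show it does better.
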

A version of Theorem \ref{main} with explicit constants was obtained by the author in his thesis \cite[Chapter 2]{thesis}.
There the author obtained a quantitatively better but qualitatively similar result, however
the argument presented here is somewhat simpler and more straightforward in the iterative step.

We put Theorem \ref{main} in some context.
It is conjectured that $D_{p}(\delta) \lsm C_{p}$ for all $2 \leq p < 6$ (see \cite{bd} and the same conjecture can be made for the decoupling
constant for the paraboloid in $\R^d$ and $2 \leq p < \frac{2(d + 1)}{d - 1}$).
By picking $g = 1_{[0, \delta]}$ we see that $D_{p}(\delta) \gtrsim 1$ for any $p \geq 1$.
Bounds for $D_{p}(\delta)$ in the range $2 \leq p \leq 4$ can be obtained from first directly proving that
$D_{4}(\delta) \lsm 1$ (see \cite[Section 4.3]{thesis}) and interpolating with $D_{2}(\delta) \lsm 1$ to obtain
that $D_{p}(\delta) \lsm C_{p}$ in the range $2 \leq p \leq 4$ (see also \cite[Remark 3.4]{jlee} for how to interpolate
decoupling estimates).
At the endpoint $p = 6$, the current best known estimates are
\begin{align}\label{endpointbd}
(\log\frac{1}{\delta})^{1/6} \lsm D_{6}(\delta) \leq \exp(O(\frac{\log\frac{1}{\delta}}{\log\log\frac{1}{\delta}}))
\end{align}
where the first inequality was obtained by Bourgain in \cite[Eq. (2.51)]{gafa93} and the second inequality was obtained by
the author in \cite[Theorem 1.1]{ef2d} (a related estimate was obtained by Bourgain using the divisor bound in the discrete Fourier restriction
case, see \cite[Proposition 2.36]{gafa93}).
It is unknown whether the upper or lower bounds in \eqref{endpointbd} can be improved.
See the bottom of Page 3 of \cite{wooleyicm} for a similar phenomenon in the (classical) $s = 3, k = 2$ case of Vinogradov's mean value theorem
though in this case we have precise asymptotics \cite{blomerbrudern}.

The initial motivation for the author to obtain effective estimates for the parabola decoupling constant was
to obtain effective estimates for the decoupling constant for the moment curve $t \mapsto (t, t^2, \ldots, t^k)$.
One key ingredient of all current proofs of decoupling for the moment curve (\cite{bdg} and \cite{glyzk}) is that decoupling
for the moment curve $t \mapsto (t, t^2, \ldots, t^k)$ requires knowledge of decoupling for the moment curves in all lower dimensions, that is
decoupling for the curves $t \mapsto (t, t^2)$, $t \mapsto (t, t^2, t^3)$, and so until $t \mapsto (t, t^2, \ldots, t^{k - 1})$.
One strategy to obtain an effective estimate for the decoupling constant for the moment curve $t \mapsto (t, t^2, \ldots, t^k)$
is then to first obtain a good effective estimate for parabola decoupling. Once we have this estimate for the parabola
we can use it to obtain an effective estimate for decoupling for the curve
$t \mapsto (t, t^2, t^3)$ which would then lead to an effective estimate for decoupling for the curve $t \mapsto (t, t^2, t^3, t^4)$, and so on.

The dependence on $k$ and $\vep$ in the moment curve decoupling constant is important to improve current known estimates on the zeta
function (see \cite{bourgain, ford1, ford2, heathbrown} and the MathOverflow question \cite{mathoverflow}).
Unfortunately because of the iterative nature of all proofs of Vinogradov's mean value theorem
\cite{bdg}, \cite{nested} (and the short proof \cite{glyzk} which is inspired from \cite{nested}), the dependence on $k$ is currently quantitatively rather poor and so yields
little progress beyond improving the implied constants in current known estimates for the zeta function.

Having obtained an effective bound for decoupling for the parabola in Theorem \ref{main}, in an appendix with
Jean Bourgain we consider the following problem on the sixth-order correlation for integer lattice points
on the circle $x^{2} + y^{2} = m$.
Let $\Lambda_m$ be the set of Gaussian integers $\ld$ with norm $m$ and $N = |\Lambda_m|$. Next, let $S_{6}(m)$
be the number of six tuples $(\ld_1, \ld_2, \ldots, \ld_6) \in  \Lambda_{m}^6$ such that $$\ld_1 + \ld_2 + \ld_3 = \ld_4 + \ld_5 + \ld_6.$$
Trivially we have $S_{6}(m) = O(N^4)$ and a bound of $O_{\vep}(N^{3+ \vep})$ is conjectured.
Bourgain in \cite[Theorem 2.2]{kkw} showed that $S_{6}(m) = o(N^4)$ as $N \rightarrow \infty$.
In \cite[Section 2]{bombieribourgain}, he showed that $S_{6}(m) = O(N^{7/2})$, making use of the Szemer\'edi-Trotter theorem.
As an application of Theorem \ref{main}, we prove the following theorem.
\begin{thm}\label{main2}
Let $A = \{(x, y) \in \Z^2: x^2 + y^2 = R\}$ and $R$ a sufficiently large integer. Then
\begin{align}\label{m2eq1}
|\{(z_1, \ldots, z_6) \in A^6: z_1 + z_2 + z_3 = z_4 + z_5 + z_6\}| \ll |A|^{3 + o(1)}
\end{align}
provided $|A| > \exp((\log R)^{1 - o(1)})$.
\end{thm}
Theorem \ref{main2} then gives an unconditional proof of \cite[Theorem 25]{bombieribourgain} (see also (126) in that paper) which was proved
in only a random sense and conditionally on the Birch and Swinnerton-Dyer conjecture and the Riemann Hypothesis for $L$-functions
of elliptic curves over $\Q$. Note that from the divisor bound on $\Z[i]$, $|A| \leq \exp(O(\frac{\log R}{\log\log R}))$ with equality attained for infinitely many $R$
and so the hypothesis of Theorem \ref{main2} covers the extremal case of $A$.

Theorem \ref{main2} is a consequence of the following square root cancellation estimate which we state (somewhat) loosely as:
Let $A$ be a set of points $(x, y)$ with $x^{2} + y^{2} = 1$ and separated by distance at least $\delta$.
Then for $4 < p < 6$,
\begin{align}\label{targetexpsum}
\nms{\sum_{a \in A}e^{2\pi i a \cdot z}}_{L^{p}_{\#}(B)} \leq \exp(O((\log\frac{1}{\delta})^{\frac{3}{4} + \frac{1}{4}\log_{2}(\frac{p - 2}{2})}(\log\log\frac{1}{\delta})^{O(1)}))|A|^{1/2}
\end{align}
for all squares $B$ of side length $O_{\vep}(\delta^{-3 - \vep})$ (see Proposition \ref{srtc} for a more precise statement).

We remark that if the set $A$ in Theorem \ref{main2} was replaced instead with the set $\{(x, y) \in \Z^2: 1 \leq x \leq R, y = x^2\}$,
then \eqref{m2eq1} is just Vinogradov's mean value theorem in the case when $s = 3, k = 2$. Thus Theorem \ref{main2} could be interpreted
as a sort of Vinogradov's mean value theorem where we replace the underlying parabola with a circle.
A key step is to establish a version of Theorem \ref{main} where instead of considering decoupling
for the parabola, we consider decoupling for the circle. We will also need to make sure that the implied constant
in such a result is uniform in $p$.

Since the $s = 3, k = 2$ case of Vinogradov's mean value theorem is an immediate consequence
of the estimate $D_{6}(\delta) \lsm_{\vep} \delta^{-\vep}$, one can ask perhaps why we can't use the analogue of this estimate for the circle
and why we are using the estimate for $D_{p}(\delta)$ in the range $4 < p < 6$ rather than the current best
known upper bound at the endpoint from \eqref{endpointbd}.

First, it was sketched in \cite[Section 7]{bd} that the analogue of $D_{6}(\delta) \lsm_{\vep} \delta^{-\vep}$ for the circle holds.
Applying this would yield \eqref{m2eq1} with the right hand side replaced
with $\lsm_{\vep} R^{\vep}|A|^{3}$. However, apriori the $R^{\vep}$ could be much larger than $|A|$ and so it is desirable to obtain the explicit dependence on $\vep$
to give a small improvement over $R^{\vep}$.

Second, our strategy in proving \eqref{targetexpsum} is to upgrade an estimate about decoupling for the parabola
to an estimate about decoupling for the circle. To do this we use the Pramanik-Seeger iteration \cite{pramanikseeger} and
keep careful track of the dependence on various implied constants.
Since the Pramanik-Seeger iteration requires another $\log\log \frac{1}{\delta}$ many iterations, upgrading the second inequality in \eqref{endpointbd} to the case
of the circle would give a bound that is worse than trivial (see Lemma \ref{dbd} and Corollary \ref{dbdcor} and compare with Theorem \ref{main}) since the trivial bound is just $\delta^{-O(1)}$.
Thus to get around these technical issues, we work instead with Theorem \ref{main} to show \eqref{targetexpsum} and then (roughly) take a limit as $p \rightarrow 6$
which allows us to recover the desired square-root cancellation estimate Lemma \ref{mainlemma} and in turn leads us to Theorem \ref{main2}.

\subsection*{Organization of the paper}
In Section \ref{weightsec}, we introduce all the properties of the weight functions $w_B, \ow_B$, and $\eta_B$ which we
will use. In Section \ref{basic} we mention basic properties of the decoupling constant such as
parabolic rescaling, a bilinear constant and bilinearization. Next in Section \ref{ball} we prove a ball inflation lemma
and apply this key lemma to the iteration in Section \ref{iteration} which proves Theorem \ref{main}.
Finally we end with an appendix joint with Jean Bourgain where we prove Theorem \ref{main2}.

While the proof of Theorem \ref{main} largely follows the outline of Bourgain and Demeter in \cite{sg} and works out the
explicit dependence on $p$, some significant differences between this work and the work of Bourgain and Demeter are as follows:
\begin{itemize}
\item We have two weight functions $w_B$ and $\ow_B$ (defined at the beginning of Section \ref{weightsec}) rather than just one; the weight function $\ow_B$ allows for
easier computation of weight functions in proofs that involving any sort of rescaling since we are now allowed to weight each direction differently.
The weight function $\ow_B$ also simplifies and clarifies the proof of parabolic rescaling in Lemma \ref{parabolic_rescaling}.
\item The bilinear constant defined in \eqref{mb_bds} is different from the bilinear constant presented in \cite[Eq. (14)]{sg}.
Both the iteration and bilinear constant are much closer to those presented in \cite{ef2d} rather in \cite{sg}.
Because we are just working in $\R^2$, the reduction to the bilinear decoupling constant in
Lemma \ref{reduction} just uses H\"{o}lder rather than a Bourgain-Guth argument.
To also obtain a quantitative estimate we make explicit a recursive relation about $D(\delta)$ in
Corollary \ref{rcor}.
\item While the statement of the ball inflation step in Lemma \ref{ballinflation} is similar to \cite[Theorem 9.2]{sg},
since we are just working in $\R^2$, the proof simplifies since the relevant geometric input is about the intersection of
transverse rectangles in $\R^2$. See the paragraph after \eqref{kakeyapar}.
\end{itemize}

\subsection*{Notation}
When we write $A \lsm B$, we mean that there is an absolute constant $C$ (independent of $p$, $\delta$, $\nu$)
such that $A \leq CB$.
If we write $A \lsm_{\vep} B$, then we mean $A \leq C_{\vep}B$ for some constant $C_{\vep}$ depending only on $\vep$.
Unless otherwise stated $p \in (4, 6)$. This will remove some dependence
on $p$ that would otherwise have been present.

We will write $P_{\delta}(I)$ to be the partition of an interval $I$ into intervals of size $\delta$
and we similarly we define $P_{\nu^{-b}}(B)$ to be the partition of the square $B$ into squares of side length $\nu^{-b}$.
Note that this implicitly uses that $|I|/\delta$ and $\ell(B)/\nu^{-b} \in \N$ where $\ell(B)$ is the side length of $B$.

We also define
\begin{align*}
\nms{f}_{L^{p}(w_B)} &:= (\int |f|^{p}w_B)^{1/p},\\
\nms{f}_{L^{p}_{\#}(B)} &:= (\frac{1}{|B|}\int_{B}|f|^{p})^{1/p},
\end{align*}
and
$$\nms{f}_{L^{p}_{\#}(w_B)} := (\frac{1}{|B|}\int |f|^{p}w_B)^{1/p}.$$
Finally for two expressions $x_1$ and $x_2$ we let $\geom x_i := (x_1 x_2)^{1/2}$.

\subsection*{Acknowledgements}
The author would like to thank Ciprian Demeter, Larry Guth, and his thesis advisor Terence Tao for encouragement and many fruitful discussions
on decoupling.
The author is extremely grateful to Jean Bourgain for kindly pointing out the application of Theorem \ref{main} to proving an unconditional
version of Theorem 25 in \cite{bombieribourgain} and for allowing his argument to become an appendix to this paper.
The author would also like to thank the anonymous referee for useful comments and suggestions.
This work was completed while the author was supported by NSF grants DGE-1144087, DMS-1266164, and DMS-1902763.

\section{Properties of weight functions}\label{weightsec}
\subsection{The weights $w_B$ and $\ow_B$}
For a square $B = B(c, R)$ with center $c \in \R^2$ and side length $R$, define
$$w_{B}(x) := (1 + \frac{|x - c_B|}{R})^{-100}$$
and $$\ow_{B}(x) := (1 + \frac{|x_1 - c_{B1}|}{R})^{-100}(1 + \frac{|x_2 - c_{B2}|}{R})^{-100}.$$
Note that $\ow_B \leq w_B \leq \ow_{B}^{1/2}$.

We let $\eta$ be a Schwartz function
such that $\eta \geq 1_{B(0, 1)}$ and $\supp(\wh{\eta}) \subset B(0, 1)$.
If $B = B(c, R)$, we define $\eta_{B}(x) := \eta(\frac{x - c}{R})$.
Note that $\eta_{B} \lsm \ow_{B}$ and $\eta_{B} \lsm w_B$. An explicit
example of such an $\eta$ was constructed in \cite[Corollary 2.2.9]{thesis}.

The following lemma is the key property satisfied by $w_B$ and $\ow_B$.
\begin{lemma}\label{wbconvolve}
For $0 < R' \leq R$,
\begin{align}\label{wbconveq1}
w_{B(0, R)} \ast w_{B(0, R')} \lsm R'^{2}w_{B(0, R)}.
\end{align}
We also have
\begin{align}\label{wbconveq2}
R^{2}w_{B(0, R)} \lsm 1_{B(0, R)} \ast w_{B(0, R)}.
\end{align}
The same inequalities hold true if $w_{B(0, R)}$ is replaced with $\ow_{B(0, R)}$.
\end{lemma}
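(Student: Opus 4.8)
The plan is to establish \eqref{wbconveq1} and \eqref{wbconveq2} directly by elementary estimates on the integrals, exploiting the fact that $w_{B(0,R)}(x) = (1 + |x|/R)^{-100}$ is, up to constants, a radial decreasing weight that is ``slowly varying'' on scale $R$. For \eqref{wbconveq1}, write
\begin{align*}
(w_{B(0,R)} \ast w_{B(0,R')})(x) = \int_{\R^2} \Big(1 + \frac{|x - y|}{R}\Big)^{-100}\Big(1 + \frac{|y|}{R'}\Big)^{-100}\, dy.
\end{align*}
I would split the domain of integration into the region $\{|y| \leq |x|/2\}$ and its complement. On the first region, $|x - y| \geq |x|/2$, so $(1 + |x-y|/R)^{-100} \lsm (1 + |x|/R)^{-100} = w_{B(0,R)}(x)$ (using $R' \le R$ is not even needed here), and the remaining integral $\int (1 + |y|/R')^{-100}\, dy \lsm R'^2$ since $100 > 2$; this contributes $\lsm R'^2 w_{B(0,R)}(x)$. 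On the second region $|y| \ge |x|/2$, we have $(1+|y|/R')^{-100} \le (1+|y|/R)^{-100} \lsm (1+|x|/R)^{-100} = w_{B(0,R)}(x)$ because $R' \le R$ and $|y| \ge |x|/2$ together with the fact that $(1+t)^{-100}$ is comparable at $t$ and $t/2$; then $\int (1 + |x-y|/R')^{-100}\, dy$ — wait, here the factor to integrate is $(1+|x-y|/R)^{-100}$, whose integral is $\lsm R^2$, which is too big. So instead, on this region I keep \emph{one} factor of $(1+|y|/R')^{-100}$ to get the $R'^2$ and bound the other two factors by $w_{B(0,R)}(x)$: since $|y| \ge |x|/2$ forces $(1+|y|/R)^{-50} \lsm (1+|x|/R)^{-50}$, and trivially $(1+|x-y|/R)^{-50} \le 1$, we get $(1+|x-y|/R)^{-100}(1+|y|/R')^{-100} \le (1+|y|/R')^{-50}\cdot(1+|y|/R)^{-50} \lsm (1+|y|/R')^{-50} w_{B(0,R)}(x)^{1/2}\cdot$ (something) — the bookkeeping with the exponent $100$ versus $50$ needs care, but since $100/2 = 50 > 2$ there is plenty of room, and splitting $100 = 50 + 50$ (or more cleanly keeping the full $100$ on $|y|/R'$ and extracting decay in $|x|$ from a $50$-power of the $|y|/R$ factor) closes it.

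For \eqref{wbconveq2}, I would compute
\begin{align*}
(1_{B(0,R)} \ast w_{B(0,R)})(x) = \int_{|y| \le R}\Big(1 + \frac{|x - y|}{R}\Big)^{-100}\, dy
\end{align*}
and note that for $|y| \le R$ we have $|x - y| \le |x| + R$, hence $(1 + |x-y|/R)^{-100} \ge (1 + |x|/R + 1)^{-100} \gsim (1 + |x|/R)^{-100} = w_{B(0,R)}(x)$; integrating over $|y| \le R$, a set of measure $\gsim R^2$, gives $\gsim R^2 w_{B(0,R)}(x)$, which is exactly \eqref{wbconveq2}. This direction is essentially immediate.

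The statement for $\ow_{B(0,R)}$ in place of $w_{B(0,R)}$ follows by the same arguments run coordinate-by-coordinate, since $\ow$ is a product of two one-dimensional weights and the convolution factors accordingly; alternatively one can deduce it from $\ow_B \le w_B \le \ow_B^{1/2}$ together with the already-established bounds, at the cost of adjusting the exponent $100$ to, say, $200$ from the start (which is harmless). I expect the only genuinely delicate point to be the region $|y| \ge |x|/2$ in \eqref{wbconveq1}: one must be careful to extract the factor $R'^2$ from a copy of the $(1 + |y|/R')^{-100}$ weight \emph{and} simultaneously recover the full decay $w_{B(0,R)}(x)$ in $|x|$, which is possible precisely because the exponent $100$ is much larger than the ambient dimension $2$, leaving a large surplus to split between the two tasks. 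Everything else is routine triangle-inequality manipulation.
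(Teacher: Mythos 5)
Your treatment of \eqref{wbconveq2}, of the region $|y|\le|x|/2$ in \eqref{wbconveq1}, and of the reduction to one dimension for $\ow_B$ are all fine and match the paper in spirit. However, the argument you sketch for the region $|y|\ge|x|/2$ does not close, and the gap is genuine, not just bookkeeping.

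Concretely: on $|y|\ge|x|/2$ you propose to split $(1+|y|/R')^{-100}$ so that one piece is integrated to produce $R'^2$ and the other piece (after passing to $(1+|y|/R)^{-50}\lsm(1+|x|/R)^{-50}$) produces decay in $x$. But any split of this form yields at best $R'^2(1+|x|/R)^{-\beta}$ with $\beta<100$, which is a \emph{weaker} upper bound than $R'^2(1+|x|/R)^{-100}=R'^2 w_{B(0,R)}(x)$ and therefore does not prove \eqref{wbconveq1}. The variant ``keep the full $100$ on $|y|/R'$ and extract decay from the $|y|/R$ factor'' also fails, since there is no separate $(1+|y|/R)^{-1}$ factor to spare: the only two factors present are $(1+|x-y|/R)^{-100}$ (which near $y=x$ is $\approx 1$ and gives no decay in $|x|$) and $(1+|y|/R')^{-100}$. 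You cannot simultaneously use the latter to produce both $R'^2$ and $(1+|x|/R)^{-100}$.

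The correct mechanism on $|y|\ge|x|/2$ is the opposite of your instinct. Bound $(1+|y|/R')^{-100}\le(1+|x|/(2R'))^{-100}$ (keeping the \emph{full} strength of the $R'$-weight as pointwise decay), and integrate the \emph{other} factor: $\int(1+|x-y|/R)^{-100}\,dy\lsm R^2$. This gives $R^2(1+|x|/(2R'))^{-100}$, which at first looks $R^2/R'^2$ too big — but for $|x|\ge R$ one has
\begin{align*}
\frac{R^2}{R'^2}\cdot\frac{(1+|x|/R)^{100}}{(1+|x|/(2R'))^{100}}
\lsm \frac{R^2}{R'^2}\cdot\Big(\frac{R'}{R}\Big)^{100}=\Big(\frac{R'}{R}\Big)^{98}\le 1,
\end{align*}
so the large slack $(R'/R)^{98}\le 1$ coming from $R'\le R$ absorbs the extra $R^2/R'^2$. (For $|x|\le R$, $w_{B(0,R)}(x)\sim 1$ and the crude bound $\int(1+|y|/R')^{-100}\,dy\lsm R'^2$ suffices.) This is precisely what the paper's proof does, via the equivalent decomposition into $|x-y|\le|x|/2$ versus $|x-y|>|x|/2$: in the first region it integrates the $R$-factor and exploits the slack $(R'/R)^{98}$, and in the second region it does the analogue of your $|y|\le|x|/2$ case. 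Your proposal handles one region correctly, but on the other it relies on an exponent split that cannot succeed; the missing ingredient is using $R'\le R$ to generate the excess decay $(R'/R)^{98}$, not trying to harvest both $R'^2$ and the decay in $x$ from a single copy of the $R'$-weight.
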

\begin{proof}
We first prove \eqref{wbconveq1}.
We would like to give an upper bound for the expression
\begin{align*}
\frac{1}{R'^{2}}\int_{\R^2}(1 + \frac{|x - y|}{R})^{-100}(1 + \frac{|y|}{R'})^{-100}(1 + \frac{|x|}{R})^{100}\, dy.
\end{align*}
A change of variables in $y$ and rescaling $x$ shows that it suffices to give an upper bound for
\begin{align}\label{coeq3}
\int_{\R^2}(1 + |x - \frac{R'}{R}y|)^{-100}(1 + |y|)^{-100}(1 + |x|)^{100}\, dy.
\end{align}
If $|x| \leq 1$, then \eqref{coeq3} is
\begin{align*}
\lsm \int_{\R^2}(1 + |y|)^{-100}\, dy \lsm 1.
\end{align*}
If $|x| > 1$, then we split \eqref{coeq3} into
\begin{align}\label{conveq2}
(\int_{|x - \frac{R'}{R}y| \leq \frac{|x|}{2}} + \int_{|x - \frac{R'}{R}y| > \frac{|x|}{2}})(1 + |x - \frac{R'}{R}y|)^{-100}(1 + |y|)^{-100}(1 + |x|)^{100}\, dy.
\end{align}
In the case of the first integral in \eqref{conveq2}, $(R'/R)|y| \geq |x| - |x - (R'/R)y| \geq |x|/2$ and hence
\begin{align*}
&\int_{|x - \frac{R'}{R}y| \leq \frac{|x|}{2}}(1 + |x - \frac{R'}{R}y|)^{-100}(1 + |y|)^{-100}(1 + |x|)^{100}\, dy\\
&\quad\quad\leq (\frac{(1 + |x|)^{100}}{(1 + (R/R')|x|/2)^{100}}\int_{\R^2}(1 + |x - \frac{R'}{R}y|)^{-100}\, dy \lsm (R'/R)^{100}(R/R')^{2} \lsm 1.
\end{align*}
In the case of the second integral in \eqref{conveq2},
\begin{align*}
\int_{|x - \frac{R'}{R}y| > \frac{|x|}{2}}(1 + |x - \frac{R'}{R}y|)^{-100}&(1 + |y|)^{-100}(1 + |x|)^{100}\, dy\\
& \leq (\frac{1 + |x|}{1 + |x|/2})^{100}\int_{\R^2}(1 + |y|)^{-100}\, dy \lsm 1.
\end{align*}
This then proves \eqref{wbconveq1}.

To prove \eqref{wbconveq2} it suffices to give a lower bound for
\begin{align*}
\frac{1}{R^2}\int_{B(0, R)}(1 + \frac{|x - y|}{R})^{-100}(1 + \frac{|x|}{R})^{100}\, dy.
\end{align*}
As before, rescaling $x$ and a change of variables in $y$ gives that it suffices to give a
lower bound for
\begin{align*}
\int_{B(0, 1)}(\frac{1 + |x|}{1 + |x - y|})^{100}\, dy \geq (\frac{1 + |x|}{2 + |x|})^{100} \gtrsim 1.
\end{align*}
The proof of the lemma for $\ow_{B(0, R)}$ is similar and so we omit the proof (details can be found on Page 14 of \cite{thesis}).
This completes the proof of the lemma.
\end{proof}

\begin{rem}\label{onedimconv}
Let $I = [-R/2, R/2]$ and $I' = [-R'/2, R'/2]$ with $0 < R' \leq R$. For $x \in \R$, let
$w_{I}(x) := (1 + \frac{|x|}{R})^{-100}$ and similarly define $w_{I'}$. The same proof as
\eqref{wbconveq1} gives that $w_{I} \ast w_{I'} \lsm R' w_{I}$ and $Rw_{I} \lsm 1_{I} \ast w_I$.
\end{rem}

\subsection{Immediate applications}
The following lemmas show how to use Lemma \ref{wbconvolve} to quickly manipulate weight functions.

\begin{lemma}\label{rot1}
Let $R$ be a rotation matrix and $|a| \lsm \delta^{-1}$. Then
\begin{align*}
w_{B(R\binom{a}{0}, \delta^{-1})}(s) \lsm w_{B(0, \delta^{-1})}(s)
\end{align*}
and the same estimate is also true with $w_{B}$ replaced with $\ow_B$.
\end{lemma}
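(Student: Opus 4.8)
The plan is to reduce the claimed pointwise inequality to the convolution bound \eqref{wbconveq2} of Lemma \ref{wbconvolve}. First I would observe that the two weights $w_{B(R\binom{a}{0},\,\delta^{-1})}$ and $w_{B(0,\,\delta^{-1})}$ differ only by a translation of their centers by the vector $R\binom{a}{0}$, which has length $|a| \lsm \delta^{-1}$; so the content of the lemma is that translating the center of a weight by an amount comparable to its own side length changes the weight only by an absolute constant factor. This is exactly the kind of statement that follows from the fact that $1_{B(0,\delta^{-1})} \ast w_{B(0,\delta^{-1})}$ is (up to $\delta^{-2}$) comparable to $w_{B(0,\delta^{-1})}$ itself, together with the dual lower-bound phrasing in \eqref{wbconveq2}.

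Concretely, writing $v := R\binom{a}{0}$ and $R_0 := \delta^{-1}$, I would estimate for any $s \in \R^2$
\begin{align*}
w_{B(v, R_0)}(s) = \Bigl(1 + \frac{|s - v|}{R_0}\Bigr)^{-100}.
\end{align*}
Since $|v| \leq C R_0$ for an absolute constant $C$, the triangle inequality gives $|s - v| \geq |s| - CR_0$ and hence $1 + \frac{|s-v|}{R_0} \geq \frac{1}{1+C}\bigl(1 + \frac{|s|}{R_0}\bigr)$ whenever $|s| \geq (1+C)CR_0$, say, while for $|s|$ in the bounded range $|s| \le (1+C)CR_0$ both weights are bounded below by an absolute constant and above by $1$. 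Raising to the power $-100$ reverses the inequality and produces $w_{B(v,R_0)}(s) \leq (1+C)^{100} w_{B(0,R_0)}(s)$ in the far regime, and the bounded regime is handled trivially; combining the two regions yields the claim with implied constant $(1+C)^{100}$. The identical computation works coordinatewise for $\ow_B$, since translating the center shifts each one-dimensional factor $(1 + |s_i - v_i|/R_0)^{-100}$ and $|v_i| \le |v| \lsm R_0$, so the same triangle-inequality argument applies in each coordinate.

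Alternatively, and perhaps more in the spirit of the preceding lemma, one can note $w_{B(v,R_0)} = \tau_v w_{B(0,R_0)}$ (the translate) and use $R_0^2 w_{B(0,R_0)} \lsm 1_{B(0,R_0)} \ast w_{B(0,R_0)}$ from \eqref{wbconveq2} together with $\tau_v 1_{B(0,R_0)} = 1_{B(v,R_0)} \le 1_{B(0, (1+C)R_0)} \lsm 1_{B(0,R_0)} \ast$ (a normalized bump), then \eqref{wbconveq1}, to collapse everything back to $w_{B(0,R_0)}$; but this is heavier machinery than needed. I expect there to be no real obstacle here: the only mild care required is keeping track that the implied constant genuinely depends only on the absolute constant hidden in $|a| \lsm \delta^{-1}$ and on the exponent $100$, not on $\delta$, which is immediate from the explicit $(1+C)^{100}$ above. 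The coordinatewise version for $\ow_B$ is the same argument repeated twice, so I would simply remark that it is analogous.
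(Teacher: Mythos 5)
Your primary argument is correct, but it takes a genuinely different and more elementary route than the paper. The paper proves Lemma~\ref{rot1} entirely through Lemma~\ref{wbconvolve}: it starts from the containment $1_{B(R\binom{a}{0},\,\delta^{-1})} \leq 1_{B(0, C\delta^{-1})} \lsm w_{B(0,\delta^{-1})}$, convolves both sides with $w_{B(0,\delta^{-1})}$, and then invokes \eqref{wbconveq1} to bound the right side by $\delta^{-2} w_{B(0,\delta^{-1})}$ and a translated form of \eqref{wbconveq2} to bound the left side below by $\delta^{-2} w_{B(R\binom{a}{0},\,\delta^{-1})}$. Your sketched ``Alternatively'' paragraph is essentially this argument. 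Your main argument, by contrast, is a direct pointwise estimate: you compare $(1 + |s-v|/R_0)^{-100}$ with $(1 + |s|/R_0)^{-100}$ by the triangle inequality, splitting into a far regime (where $1 + |s-v|/R_0 \gtrsim 1 + |s|/R_0$, in fact once $|s| \gtrsim C R_0$) and a bounded regime (where both weights are comparable to $1$), and handle $\ow_B$ coordinatewise. This is shorter and makes the absolute constant $(1+C)^{100}$ completely explicit; the paper's route is slightly longer but is chosen to demonstrate (and re-use) the convolution machinery of Lemma~\ref{wbconvolve}, which is applied repeatedly throughout Section~\ref{weightsec}. Both approaches are correct and the dependence of the implied constant only on the constant hidden in $|a|\lsm\delta^{-1}$ and on the exponent $100$ is handled correctly in your version.
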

\begin{proof}
Observe that for some sufficiently large $C$,
\begin{align*}
1_{B(R\binom{a}{0}, \delta^{-1})} \leq 1_{B(0, C\delta^{-1})} \lsm w_{B(0, C\delta^{-1})} \lsm w_{B(0, \delta^{-1})}.
\end{align*}
Convolving both sides with $w_{B(0, \delta^{-1})}$ and using Lemma \ref{wbconvolve} gives
\begin{align*}
w_{B(0, \delta^{-1})} \ast w_{B(0, \delta^{-1})} \lsm \delta^{-2}w_{B(0, \delta^{-1})}
\end{align*}
and
\begin{align*}
1_{B(R\binom{a}{0}, \delta^{-1})} \ast w_{B(0, \delta^{-1})} \gtrsim \delta^{-2}w_{B(R\binom{a}{0}, \delta^{-1})}
\end{align*}
which completes the proof of the lemma.
\end{proof}

\begin{lemma}\label{part}
Let $B$ be a square of side length $R$ and let $\mc{B}$ be a finitely overlapping cover of $B$
into squares $\Delta$ of side length $R' < R$.
Then
$$\sum_{\Delta \in \mc{B}}w_{\Delta} \lsm w_{B}.$$
This inequality remains true with $w_{\Delta}$ and $w_{B}$ replaced with $\ow_{\Delta}$ and $\ow_{B}$.
\end{lemma}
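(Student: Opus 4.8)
The plan is to reduce the estimate for the weights $w_\Delta$ to the pointwise estimate already established in Lemma~\ref{wbconvolve}, following the same template as in Lemma~\ref{rot1}. First I would fix a point $x \in \R^2$ and observe that since $\mc{B}$ is a finitely overlapping cover of $B$, the function $\sum_{\Delta \in \mc{B}} 1_\Delta$ is bounded by an absolute constant (the overlap number) times $1_B$, and in fact every $\Delta$ is contained in $B$ so $\sum_\Delta 1_\Delta \lsm 1_B$. The goal is then to convert this crude $L^\infty$-type domination into one for the smooth weights.

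Next I would convolve. Using \eqref{wbconveq1} with $R' = R'$ (the side length of the $\Delta$'s) and $R$ the side length of $B$, we have $w_B \ast w_{B(0,R')} \lsm R'^2 w_B$; and writing each $\Delta = B(c_\Delta, R')$, a translation of the convolution estimate \eqref{wbconveq1} gives $w_B \ast w_\Delta \lsm R'^2 w_{B'}$ for a slightly enlarged $B'$ comparable to $B$, hence $\lsm R'^2 w_B$ after adjusting constants as in Lemma~\ref{rot1}. On the other hand, \eqref{wbconveq2} applied at scale $R'$ gives $R'^2 w_\Delta \lsm 1_\Delta \ast w_\Delta$. Combining, for each $\Delta$,
\begin{align*}
R'^2 w_\Delta \lsm 1_\Delta \ast w_\Delta \leq 1_B \ast w_\Delta,
\end{align*}
but this last convolution is not quite $w_B \ast w_\Delta$. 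The cleaner route is: since $w_\Delta \lsm w_{B}|_{\text{near }\Delta}$ is false in general, instead bound $1_\Delta \leq 1_B$ and $w_\Delta(y) \lsm w_{\wt B}(y)$ only fails too — so I would instead directly sum: $\sum_\Delta R'^2 w_\Delta \lsm \sum_\Delta 1_\Delta \ast w_\Delta$, and then use that $w_\Delta \lsm$ a fixed translate-dilate structure to write $\sum_\Delta 1_\Delta \ast w_\Delta \lsm (\sum_\Delta 1_\Delta) \ast w_{B(0,R')}^{(\cdot)}$; more carefully, pick the common majorant $w_\Delta(y) \lsm w_{B(c_\Delta, R')}(y)$ and note $1_\Delta \ast w_\Delta(x) = \int_\Delta w_\Delta(x - y)\, dy \lsm \int_{\Delta} w_{B(0,R')}(x-y)\,dy$ translated appropriately, so that $\sum_\Delta 1_\Delta \ast w_\Delta(x) \lsm \int_{\R^2}\Big(\sum_\Delta 1_\Delta(y)\Big)(1 + \tfrac{|x-y|}{R'})^{-100 \text{-type bump centered near }y}$, which is dominated by $C\, (1_B \ast w_{B(0,R')})(x) \lsm C R'^2 w_B(x)$ by \eqref{wbconveq1} (using $1_B \lsm w_B$). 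Dividing by $R'^2$ finishes it.

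Concretely, the argument I would write is: from $1_\Delta \lsm w_\Delta$ and Lemma~\ref{wbconvolve} we get $R'^2 w_\Delta \lsm 1_\Delta \ast w_\Delta \lsm w_\Delta \ast w_\Delta$; but the efficient version just chains $R'^2 w_\Delta \lsm 1_\Delta \ast w_\Delta$ and then $\sum_\Delta 1_\Delta \ast w_\Delta \lsm \big(\sum_\Delta 1_\Delta\big) \ast w_{B(0,R')} \lsm 1_B \ast w_{B(0,R')}$ (absorbing the bounded overlap), which by \eqref{wbconveq1} applied with the roles giving $1_B \lsm w_B$ and $w_B \ast w_{B(0,R')} \lsm R'^2 w_B$ is $\lsm R'^2 w_B$. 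Cancelling $R'^2$ gives $\sum_\Delta w_\Delta \lsm w_B$. The same chain works verbatim with $\ow$ in place of $w$ since Lemma~\ref{wbconvolve} holds for $\ow$ as well, and Remark~\ref{onedimconv} is not needed here.

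The main obstacle, and the only genuinely delicate point, is the step $\sum_\Delta 1_\Delta \ast w_\Delta \lsm 1_B \ast w_{B(0,R')}$: the weights $w_\Delta$ are centered at the varying centers $c_\Delta$, so to pull the sum inside the convolution one must first replace $w_\Delta(x-y) = (1 + |x - y - c_\Delta|/R')^{-100}$ by something independent of $\Delta$ up to constants — but for $y \in \Delta$ we have $|x - y - c_\Delta| \geq |x - y| - \tfrac{\sqrt2}{2} R'$ is the wrong direction, so instead one uses $|y - c_\Delta| \leq \tfrac{\sqrt 2}{2}R'$ directly: for $y\in\Delta$, $(1 + |x-y-c_\Delta|/R')^{-100}$ is comparable to a fixed profile evaluated at $|x-y|/R'$ only when $|x-y|\gtrsim R'$, and for $|x-y|\lesssim R'$ it is $\asymp 1$; either way it is $\lsm C(1+|x-y|/(2R'))^{-100}$, which is $\lsm$ a dilate of $w_{B(0,R')}$. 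Making this comparison uniform in $\Delta$, summing over the boundedly-overlapping $\Delta$'s, and then invoking \eqref{wbconveq1} is the crux; everything else is bookkeeping of absolute constants exactly as in Lemma~\ref{rot1}.
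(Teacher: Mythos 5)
Your plan (convolve, appeal to Lemma~\ref{wbconvolve}, pull the sum through the convolution) is exactly the paper's plan, but you mis-translate \eqref{wbconveq2} and this error is what causes all the subsequent difficulty. Writing $\Delta = B(c_\Delta, R')$, the correct translated form of \eqref{wbconveq2} is
\begin{align*}
R'^2 w_\Delta \lsm 1_\Delta \ast w_{B(0,R')},
\end{align*}
with the convolution kernel $w_{B(0,R')}$ \emph{kept centered at the origin}. Indeed, if $f = 1_{B(0,R')}$ and $g = w_{B(0,R')}$, then translating both sides of $R'^2 g \lsm f \ast g$ by $c_\Delta$ gives $R'^2 \tau_{c_\Delta} g \lsm \tau_{c_\Delta}(f \ast g) = (\tau_{c_\Delta} f) \ast g = 1_\Delta \ast w_{B(0,R')}$. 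Your version, $R'^2 w_\Delta \lsm 1_\Delta \ast w_\Delta$, is a \emph{double} translation: $(\tau_{c_\Delta} f) \ast (\tau_{c_\Delta} g) = \tau_{2c_\Delta}(f\ast g)$, so $1_\Delta \ast w_\Delta \sim R'^2\, w_{B(2c_\Delta, R')}$, which is a weight centered at $2c_\Delta$, and $w_{B(2c_\Delta, R')}$ does not dominate $w_\Delta = w_{B(c_\Delta, R')}$ once $|c_\Delta| \gg R'$.

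This false step then forces the ``obstacle'' you spend the rest of the proposal trying to negotiate — replacing the $\Delta$-dependent kernel $w_\Delta$ inside the convolution by the fixed $w_{B(0,R')}$. The estimate you propose there, namely that for $y \in \Delta$ one has $w_\Delta(x-y) = (1 + |x - y - c_\Delta|/R')^{-100} \lsm (1 + |x-y|/(2R'))^{-100}$, is false: take $x = 2c_\Delta$ and $y = c_\Delta \in \Delta$ with $|c_\Delta| \gg R'$; then the left side equals $1$ while the right side is $(1 + |c_\Delta|/(2R'))^{-100} \ll 1$. So the chain does not close.

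With the correctly translated \eqref{wbconveq2} the whole issue evaporates: the kernel $w_{B(0,R')}$ is independent of $\Delta$, so by linearity of convolution
\begin{align*}
R'^2 \sum_\Delta w_\Delta \lsm \sum_\Delta 1_\Delta \ast w_{B(0,R')} = \Big(\sum_\Delta 1_\Delta\Big) \ast w_{B(0,R')} \lsm 1_B \ast w_{B(0,R')} \lsm R'^2 w_B,
\end{align*}
the last step via $1_B \lsm w_B$ and \eqref{wbconveq1}. This is precisely the paper's proof; the one subtlety the paper's choice buys you is that there is nothing to justify when summing over $\Delta$. Your closing paragraph is thus chasing a difficulty that the right normalization of the convolution kernel removes entirely.
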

\begin{proof}
It suffices to prove the case when $B$ is centered at the origin.
Since $\mc{B}$ is a finitely overlapping cover of $B$,
$\sum_{\Delta \in \mc{B}}1_{\Delta} \lsm 1_B$.
Therefore
$$\sum_{\Delta \in \mc{B}}1_{\Delta} \ast w_{B(0, R')} \lsm 1_{B} \ast w_{B(0, R')}.$$
Lemma \ref{wbconvolve} gives that
$$R'^{2}\sum_{\Delta \in \mc{B}}w_{\Delta} \lsm \sum_{\Delta \in \mc{B}}1_{\Delta} \ast w_{B(0, R')}$$
and
$$1_{B} \ast w_{B(0, R')} \lsm R'^{2}w_{B}.$$ Rearranging then completes the proof.
\end{proof}

The following lemma allows us to upgrade from unweighted to weighted estimates.
\begin{lemma}\label{convint}
For $1 \leq p < \infty$,
\begin{align*}
\nms{f}_{L^{p}(w_{B(0, R)})}^{p} \lsm \int_{\R^2}\nms{f}_{L^{p}_{\#}(B(y, R))}^{p}w_{B(0, R)}(y)\, dy.
\end{align*}
This corollary is also true with $w_{B(0, R)}$ replaced with $\ow_{B(0, R)}$.
\end{lemma}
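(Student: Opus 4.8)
The plan is to unfold the right-hand side, apply Fubini, recognize a convolution, and then quote \eqref{wbconveq2}. Concretely, by the definition of the $L^p_\#$ norm,
\begin{align*}
\int_{\R^2}\nms{f}_{L^{p}_{\#}(B(y, R))}^{p}w_{B(0, R)}(y)\, dy = \frac{1}{R^2}\int_{\R^2}\int_{\R^2}|f(x)|^{p}1_{B(y, R)}(x)w_{B(0, R)}(y)\, dx\, dy.
\end{align*}
Since $|B(y,R)| = R^2$ and $1_{B(y, R)}(x) = 1_{B(0, R)}(x - y)$ (the squares being axis-parallel of side length $R$), Fubini lets us rewrite the inner $y$-integral as a convolution, so the above equals
\begin{align*}
\frac{1}{R^2}\int_{\R^2}|f(x)|^{p}(1_{B(0, R)} \ast w_{B(0, R)})(x)\, dx.
\end{align*}

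Next I would invoke \eqref{wbconveq2} of Lemma \ref{wbconvolve}, namely $R^{2}w_{B(0, R)} \lsm 1_{B(0, R)} \ast w_{B(0, R)}$, which after dividing by $R^2$ gives $w_{B(0,R)} \lsm R^{-2}(1_{B(0, R)} \ast w_{B(0, R)})$ pointwise. Plugging this in and using $|f|^p \geq 0$ yields
\begin{align*}
\frac{1}{R^2}\int_{\R^2}|f(x)|^{p}(1_{B(0, R)} \ast w_{B(0, R)})(x)\, dx \gtrsim \int_{\R^2}|f(x)|^{p}w_{B(0, R)}(x)\, dx = \nms{f}_{L^{p}(w_{B(0, R)})}^{p},
\end{align*}
which is the claimed inequality. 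The statement for $\ow_{B(0,R)}$ follows verbatim: the identity $1_{B(y,R)}(x) = 1_{B(0,R)}(x-y)$ is unchanged, and one applies the $\ow$-version of \eqref{wbconveq2} asserted in the last sentence of Lemma \ref{wbconvolve}.

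I do not expect a genuine obstacle here — the argument is just Fubini plus one already-established weight inequality. The only thing requiring a little care is the bookkeeping: correctly matching the $1/|B(y,R)| = R^{-2}$ normalization in $\nms{\cdot}_{L^p_\#}$ with the $R^2$ factor appearing in \eqref{wbconveq2}, and noting that the indicator of an axis-parallel square is symmetric so that the $y$-integral is literally $1_{B(0,R)} \ast w_{B(0,R)}$ (rather than its reflection, which would not matter anyway).
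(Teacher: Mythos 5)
Your proof is correct and is essentially identical to the paper's: the paper states the same identity $\int \nms{f}_{L^p_\#(B(y,R))}^p w_{B(0,R)}(y)\, dy = \int |f(x)|^p (R^{-2} 1_{B(0,R)} \ast w_{B(0,R)})(x)\, dx$ and then invokes Lemma \ref{wbconvolve}, exactly as you do. You simply spell out the Fubini step and the indicator symmetry $1_{B(y,R)}(x) = 1_{B(0,R)}(x-y)$ that the paper leaves implicit.
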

\begin{proof}
This estimate is from the proof of \cite[Theorem 5.1]{sg}.
The lemma follows from observing that
\begin{align*}
\int_{\R^2}\nms{f}_{L^{p}_{\#}(B(y, R))}^{p}w_{B(0, R)}(y)\, dy &= \int_{\R^2}|f(x)|^{p}(\frac{1}{R^2}1_{B(0, R)} \ast w_{B(0, R)})(x)\, dx
\end{align*}
and applying Lemma \ref{wbconvolve}.
\end{proof}

\begin{prop}\label{up}
Let $I \subset [0, 1]$ and $\mc{P}$ be a disjoint partition of $I$.
\begin{enumerate}[$(a)$]
\item Suppose for some $2 \leq p < \infty$, we have
\begin{align*}
\nms{\E_{I}g}_{L^{p}(B)} \leq C(\sum_{J \in \mc{P}}\nms{\E_{J}g}_{L^{p}(w_{B})}^{2})^{1/2}
\end{align*}
for all $g: [0, 1] \rightarrow \C$ and all squares $B$ of side length $R$. Then
\begin{align}\label{up1}
\nms{\E_{I}g}_{L^{p}(w_{B})} \lsm C(\sum_{J \in \mc{P}}\nms{\E_{J}g}_{L^{p}(w_{B})}^{2})^{1/2}
\end{align}
for all $g: [0, 1] \rightarrow \C$ and all squares $B$ of side length $R$.

\item Suppose for some $1\leq p < q < \infty$, we have
\begin{align*}
\nms{\E_{I}g}_{L^{q}_{\#}(B)} \leq C\nms{\E_{I}g}_{L^{p}_{\#}(\eta_{B}^{p})}
\end{align*}
for all $g: [0, 1] \rightarrow \C$ and all squares $B$ of side length $R$. Then
\begin{align}\label{up3}
\nms{\E_{I}g}_{L^{q}_{\#}(w_{B})} \lsm C\nms{\E_{I}g}_{L^{p}_{\#}(w_{B}^{p/q})}
\end{align}
for all $g: [0, 1] \rightarrow \C$ and all squares $B$ of side length $R$.
\end{enumerate}
The same results are also true with $w_{B}$ replaced with $\ow_{B}$.
\end{prop}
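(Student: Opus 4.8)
The plan is to deduce both weighted estimates from their unweighted (or $L^p_\#$) counterparts by averaging over translates of $B$ and invoking the convolution identities in Lemma \ref{wbconvolve}, packaged through Lemma \ref{convint}. For part $(a)$, I would first note that by translation invariance of the hypothesis (it holds for \emph{all} squares $B$ of side length $R$), we may apply it with $B$ replaced by $B(y,R)$ for every $y \in \R^2$, giving $\nms{\E_I g}_{L^p(B(y,R))} \le C(\sum_J \nms{\E_J g}_{L^p(w_{B(y,R)})}^2)^{1/2}$. Dividing by $|B|^{1/p}$ to pass to $L^p_\#$, raising to the $p$-th power, and integrating against $w_{B(0,R)}(y)\,dy$, the left side becomes (up to constants) $\nms{\E_I g}_{L^p(w_{B(0,R)})}^p$ by Lemma \ref{convint}. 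For the right side I would use Minkowski's inequality in the form that moves the $\ell^2$ sum outside the $y$-integral — this is where $p \ge 2$ is used — reducing matters to bounding $\int \nms{\E_J g}_{L^p_\#(w_{B(y,R)})}^p w_{B(0,R)}(y)\,dy$ for each fixed $J$. Expanding this as $\int |\E_J g(x)|^p \big(\tfrac{1}{R^2} w_{B(0,R)} \ast w_{B(0,R)}\big)(x)\,dx$ and applying \eqref{wbconveq1} with $R' = R$ gives $\lsm \nms{\E_J g}_{L^p(w_{B(0,R)})}^p$, which after reassembling yields \eqref{up1}. The general-center case then follows by translation.

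For part $(b)$, the strategy is the same averaging argument but now the exponents on the two sides differ. Applying the hypothesis to $B(y,R)$ and passing to $L^q_\#$ gives $\nms{\E_I g}_{L^q_\#(B(y,R))} \le C \nms{\E_I g}_{L^p_\#(\eta_{B(y,R)}^p)}$; raise to the $q$-th power and integrate against $w_{B(0,R)}(y)\,dy$. The left side is again handled by Lemma \ref{convint}, producing $\nms{\E_I g}_{L^q_\#(w_{B(0,R)})}^q$. On the right side, after expanding I must bound $\int \big(\int |\E_I g(x)|^p \eta_{B(y,R)}(x)^p\,dx / R^2\big)^{q/p} w_{B(0,R)}(y)\,dy$. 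Since $q/p > 1$, I would apply Hölder (or Jensen) in the inner integral, or more cleanly use $\eta_{B(y,R)} \lsm w_{B(y,R)}$ together with the fact that $\big(\tfrac{1}{R^2}\int |\E_I g|^p w_{B(y,R)}\big)^{1/p} = \nms{\E_I g}_{L^p_\#(w_{B(y,R)})}$, so the right side is $\lsm \int \nms{\E_I g}_{L^p_\#(w_{B(y,R)})}^q w_{B(0,R)}(y)\,dy$; writing $q = p \cdot (q/p)$ and pulling out an $L^\infty$-type bound on one factor of $\nms{\E_I g}_{L^p_\#(w_{B(y,R)})}^{q-p}$ is too lossy, so instead I would keep things as an $L^p$ quantity raised to the power $q$ and recognize the target norm $\nms{\E_I g}_{L^p_\#(w_B^{p/q})}$ on the right of \eqref{up3}: one has $\nms{\E_I g}_{L^p_\#(w_B^{p/q})}^q = \big(\tfrac{1}{|B|}\int |\E_I g|^p w_B^{p/q}\big)^{q/p}$, and the averaging/convolution step should be arranged so that the $p/q$ power on the weight appears naturally from distributing $w_{B(0,R)}(y)$ across the $q/p$ outer power. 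Concretely I expect to use that $w_{B(y,R)}(x)^{p/q} w_{B(0,R)}(y) \lsm$ (convolution kernel) after the change of variables, exactly as in Lemma \ref{convint} but with the weight raised to the $p/q$ power.

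The main obstacle is the bookkeeping in part $(b)$: matching the mismatched exponents $p < q$ so that the final right-hand side is genuinely $\nms{\E_I g}_{L^p_\#(w_B^{p/q})}$ and not some larger weighted norm, while only losing an absolute constant. The cleanest route is likely to reduce \eqref{up3} to the statement $\big(\tfrac{1}{|B|}\int |\E_I g|^p w_B\big)^{q/p} \lsm$ (average over translates of the $L^q_\#$ hypothesis), then invoke the pointwise bound $\big(\tfrac{1}{R^2} w_{B(0,R)}\ast w_{B(0,R)}\big)(x) \lsm w_{B(0,R)}(x)$ from \eqref{wbconveq1} with $R'=R$; since $\eta_B \lsm w_B$ and $w_B \le \ow_B^{1/2}$ (so all the weight comparisons in the statement are available), no new estimate beyond Lemma \ref{wbconvolve} and Lemma \ref{convint} should be needed. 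The $\ow_B$ versions follow verbatim because Lemma \ref{wbconvolve}, Lemma \ref{convint}, and Lemma \ref{convint}'s proof all hold with $w$ replaced by $\ow$.
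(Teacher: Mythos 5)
Your proof of part $(a)$ is correct and is essentially the paper's argument: apply the hypothesis on each translate $B(y,R)$, integrate against $w_{B(0,R)}(y)\,dy$, use Lemma~\ref{convint} on the left, Minkowski ($p\ge 2$) on the right to pull the $\ell^2$ sum out, and then recognize $\tfrac1{R^2}w_{B}\ast w_B\lsm w_B$.

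For part $(b)$ the overall strategy (translate, average, Lemma~\ref{convint}) is again the right one, but the decisive step is left as a gap. You correctly identify that the difficulty is to have $w_B^{p/q}$, not $w_B$, emerge on the right, and you correctly reject the lossy $L^\infty$-factorization, but the mechanism you then guess at is off. What actually makes the exponent $p/q$ appear is Minkowski's integral inequality with exponent $q/p\ge1$: applied to
\[
\int_{\R^2}\Big(\int_{\R^2}|\E_Ig(s)|^p\,\eta_{B(0,R)}(s-y)^p\,ds\Big)^{q/p}w_{B}(y)\,dy,
\]
it yields
\[
\Big(\int_{\R^2}|\E_Ig(s)|^p\,\big(\eta_B^{\,q}\ast w_B\big)(s)^{p/q}\,ds\Big)^{q/p},
\]
where the power $q/p$ sits \emph{inside} the convolution on the kernel and the power $p/q$ sits \emph{outside}; the target weight $w_B^{p/q}$ then comes from $\eta_B^q\ast w_B\lsm \exp(O(q))R^2w_B$ followed by raising to the $p/q$ power, i.e.\ from $(R^2w_B)^{p/q}$. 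Your proposed inequality ``$w_{B(y,R)}(x)^{p/q}\,w_{B(0,R)}(y)\lsm$ (convolution kernel)'' reverses the role of the two exponents and would require estimating $w_B^{p/q}\ast w_B$, which is not the object that arises and whose control degrades as $p/q\to0$; what Minkowski hands you instead is $w_B^{q/p}\ast w_B\le w_B\ast w_B\lsm R^2w_B$, which is harmless. Likewise your remark about ``reducing to $(\tfrac1{|B|}\int|\E_Ig|^pw_B)^{q/p}$'' is harmless only because $w_B^{p/q}\ge w_B$, but that observation is not needed and does not help you produce the correct exponent. Once you insert the explicit Minkowski step (and note that the $\exp(O(q))$ constant disappears after taking $q$-th roots, as in part $(a)$), your outline closes.
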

\begin{proof}
We first prove $(a)$. It suffices to prove \eqref{up1} in the case when $B$ is centered at the origin.
Corollary \ref{convint} implies that
\begin{align*}
\nms{\E_{I}g}_{L^{p}(w_{B})}^{p} &\lsm \int_{\R^2}\nms{\E_{I}g}_{L^{p}_{\#}(B(y, R))}^{p}w_{B}(y)\, dy\\
&\lsm R^{-2}C^{p}\int_{\R^2}(\sum_{J \in \mc{P}}\nms{\E_{J}g}_{L^{p}(w_{B(y, R)})}^{2})^{p/2}w_{B}(y)\, dy.
\end{align*}
Since $p \geq 2$, we can interchange the $L^{p}_{y}(w_{B})$ and $l^{2}_{J}$ norms and the
above is
\begin{align}\label{up1pf}
\lsm R^{-2}C^{p}(\sum_{J \in \mc{P}}(\int_{\R^2}\nms{\E_{J}g}_{L^{p}(w_{B(y, R)})}^{p}w_{B}(y)\, dy)^{2/p})^{p/2}.
\end{align}
Since $B$ is assumed to be centered at the origin,
\begin{align*}
\int_{\R^2}\nms{\E_{J}g}_{L^{p}(w_{B(y, R)})}^{p}w_{B}(y)\, dy = \nms{\E_{J}g}_{L^{p}(w_{B} \ast w_{B})}^{p} \lsm R^{2}\nms{\E_{J}g}_{L^{p}(w_{B})}^{p}
\end{align*}
where the inequality is an application of Lemma \ref{wbconvolve}. Inserting this into \eqref{up1pf}
gives that
\begin{align*}
\nms{\E_{I}g}_{L^{p}(w_{B})}^{p} \lsm C^{p}(\sum_{J \in \mc{P}}\nms{\E_{J}g}_{L^{p}(w_{B})}^{2})^{p/2}.
\end{align*}
Taking $1/p$ powers of both sides and using that $p \geq 1$ completes the proof of \eqref{up1}.

We now prove $(b)$. Again it suffices to prove \eqref{up3} in the case when $B$ is centered at the origin. Corollary \ref{convint}
implies that
\begin{align*}
\nms{\E_{I}g}_{L^{q}(w_{B, E})}^{q} &\lsm \int_{\R^2}\nms{\E_{I}g}_{L^{q}_{\#}(B(y, R))}^{q}w_{B}(y)\, dy\\
&\lsm C^{q}R^{-2q/p}\int_{\R^2}\nms{\E_{I}g}_{L^{p}(\eta_{B(y, R)}^{p})}^{q}w_{B}(y)\, dy\\
&= C^{q}R^{-2q/p}\int_{\R^2}(\int_{\R^2}|(\E_{I}g)(s)|^{p}\eta_{B(0, R)}(s - y)^{p}\, ds)^{q/p}w_{B}(y)\, dy.
\end{align*}
Since $q > p$, we can interchange the $L^{p}$ and $L^{q}$ norms and the above is
\begin{equation}\label{weightbernsteineq1}
\begin{aligned}
\lsm C^{q}R^{-2q/p}(\int_{\R^2}|\E_{I}g(s)|^{p}(\eta_{B}^{q} \ast w_{B})(s)^{p/q}\, ds)^{q/p}
\end{aligned}
\end{equation}
Lemma \ref{wbconvolve} gives that
\begin{align*}
\eta_{B}^{q} \ast w_{B} \lsm \exp(O(q))(w_{B}^{q} \ast w_{B}) \lsm  \exp(O(q))R^{2}w_{B}.
\end{align*}
Inserting this into \eqref{weightbernsteineq1} shows that
\begin{align*}
\nms{\E_{I}g}_{L^{q}(w_{B})}^{q} \lsm \exp(O(q))C^{q}R^{2 - 2q/p}\nms{\E_{I}g}_{L^{p}(w_{B}^{p/q})}^{q}
\end{align*}
Changing $L^{q}$ and $L^{p}$ into $L^{q}_{\#}$ and $L^{p}_{\#}$, respectively, removes the factor of $R^{2 - 2q/p}$.
Taking $1/q$ powers of both sides and using that $q \geq 1$ then completes the proof of \eqref{up3}.

Since Lemma \ref{wbconvolve} holds for $\ow_B$, the above proof also shows this proposition holds with every instance
of $w_B$ replaced with $\ow_B$. This completes the proof of the proposition.
\end{proof}

\begin{rem}\label{layercake}
Lemma \ref{convint} is not the only way to convert unweighted estimates to weighted estimates. Another approach is to prove
an unweighted estimate where $B$ is replaced by $2^{n}B$ for all $n \geq 0$ and then use that $w_{B} \sim \sum_{n \geq 0}2^{-100n}1_{2^{n}B}$
to conclude the weighted estimate.
\end{rem}

\section{Basic tools for decoupling}\label{basic}
\subsection{Parabolic rescaling}\label{parab}
We first have parabolic rescaling. We include a proof because we include a small clarification
to the proof of \cite[Proposition 7.1]{sg}, (in particular to (13) of ibid.).

We first begin with the following equivalence of decoupling constants.
\begin{rem}\label{equiv}
Let $\wt{D}_{p}(\delta)$ be the best constant such that
\begin{align*}
\nms{\E_{[0, 1]}g}_{L^{p}(B)} \leq \wt{D}_{p}(\delta)(\sum_{J \in P_{\delta}([0, 1])}\nms{\E_{J}g}_{L^{p}(\ow_{B}^{10})}^{2})^{1/2}
\end{align*}
for all $g: [0, 1] \rightarrow \C$ and squares $B$ of side length $\delta^{-2}$. Next let $K_{p}(\delta)$ be the best constant
such that
$$\nms{f}_{p} \leq K_{p}(\delta)(\sum_{J \in P_{\delta}([0, 1])}\nms{f_{\ta_J}}_{p}^{2})^{1/2}$$
for all $f$ with Fourier support in $\bigcup_{J \in P_{\delta}([0, 1])}\ta_J$ where $\ta_{J} := \{(s, s^2 + t): s \in J, |t| \leq \delta^2\}$.
Remark 5.2 of \cite{bd} (alternatively see the arguments in Sections 2.3 and 4.1 in \cite{thesis})
shows that for $2 \leq p \leq 6$, $\wt{D}_{p}(\delta) \sim K_{p}(\delta) \sim D_{p}(\delta)$ where the implied constant is absolute.

This equivalence also shows that decoupling constants are monotonic.
That is, if for two scales $\delta_s < \delta_l$, then a change of variables shows that $K_{p}(\delta_l) \leq K_{p}(\delta_s)$
(see \cite[Proposition 4.2.1]{thesis} for details). The equivalence from the previous paragraph then shows that
$D_{p}(\delta_l) \lsm D_{p}(\delta_s)$ for $2 \leq p \leq 6$ and any two scales $\delta_s$ and $\delta_l$ with $\delta_s < \delta_l$.
Note here the implied constant does not depend on $p, \delta_l$, and $\delta_s$ and is absolute.
\end{rem}

\begin{lemma}\label{parabolic_rescaling}
Let $0 < \delta < \sigma < 1$ be such that $\delta/\sigma \in \N^{-1}$. Let $I$ be an arbitrary interval
in $[0, 1]$ of length $\sigma$. Then
\begin{align*}
\nms{\E_{I}g}_{L^{p}(B)} \lsm D_{p}(\frac{\delta}{\sigma})(\sum_{J \in P_{\delta}(I)}\nms{\E_{J}g}_{L^{p}(w_B)}^{2})^{1/2}
\end{align*}
for all $g: [0, 1] \rightarrow \C$ and all squares $B \subset \R^2$ of side length $\delta^{-2}$.
\end{lemma}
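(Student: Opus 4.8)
The plan is to reduce the estimate on the interval $I$ of length $\sigma$ to the definitional estimate for $D_p(\delta/\sigma)$ on the unit interval $[0,1]$ by an affine change of variables that straightens the arc of the parabola over $I$. Write $I = [a, a+\sigma]$. The affine map $L(\xi) = (\xi - a)/\sigma$ sends $I$ to $[0,1]$ and sends the partition $P_\delta(I)$ to $P_{\delta/\sigma}([0,1])$. On the Fourier side, the key identity is that the graph $\{(\xi, \xi^2): \xi \in I\}$ is the image of $\{(\eta, \eta^2): \eta \in [0,1]\}$ under the linear map $\Phi_a(\eta_1, \eta_2) = (\sigma \eta_1 + a, \sigma^2 \eta_2 + 2a\sigma\eta_1 + a^2)$, so that if $g$ is supported on $I$ and $h(\eta) := g(a + \sigma\eta)$ is the corresponding function on $[0,1]$, then $(\E_I g)(x) = \sigma \cdot e(\text{linear phase in } x)\cdot (\E_{[0,1]} h)(\Phi_a^T x + \text{shift})$, where $\Phi_a^T$ is the transpose (equivalently, the adjoint acting on the physical variable). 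The Jacobian of $\Phi_a^T$ has determinant $\sigma^3$.

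First I would make this change of variables precise and check that the physical-space affine map $T_a := (\Phi_a^T)^{-1}$ (composed with the appropriate translation) sends a square $B$ of side length $\delta^{-2}$ to a region contained in a square $\tilde B$ of side length $\lsm (\delta/\sigma)^{-2}$: this is exactly where one uses that $\sigma < 1$, since the linear part of $T_a$ is a shear with entries controlled by $\sigma^{-1}$ and $a\sigma^{-2} \lsm \sigma^{-2}$, so $T_a(B)$ fits inside a ball of radius $\lsm \sigma^{-2}\delta^{-2} = (\delta/\sigma)^{-2}$. Then I would apply the definition of $D_p(\delta/\sigma)$ to $\E_{[0,1]}h$ on $\tilde B$, getting the $l^2$ sum over $\tilde J \in P_{\delta/\sigma}([0,1])$ of $\nms{\E_{\tilde J}h}_{L^p(w_{\tilde B})}^2$. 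Finally I would undo the change of variables in each term, matching $\tilde J$ with the corresponding $J \in P_\delta(I)$, and convert $w_{\tilde B}$ back to $w_B$ (up to an absolute constant) using a weight comparison of the type in Lemma \ref{rot1}/Lemma \ref{part}; the Jacobian factors $\sigma^{\pm}$ cancel between the two sides because they appear with the same power on the left ($\E_I$) and inside each summand on the right ($\E_J$, after the $l^2$-normalization).

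The main obstacle — and the "small clarification" the remark before the lemma alludes to — is bookkeeping the weights: the affine image $T_a(B)$ of an axis-parallel square is a parallelogram, not a square, so after pulling back one does not literally get $w_B$ but a weight adapted to a tilted, eccentric region, and one must dominate it by $C\, w_B$. Since the linear distortion is a shear by a factor $\lsm \sigma^{-1} \lsm \delta^{-1}$ at scale $\delta^{-2}$ and the weights decay polynomially of high order ($100$), this domination holds with an absolute constant; making this airtight (and doing it for the $w_B$ on the left as well, to pass from the square $\tilde B$ provided by the definition back to $B$) via Lemma \ref{wbconvolve} and the covering lemmas is the only real content. Everything else is the change of variables and the observation $\sigma < 1 \Rightarrow \sigma^{-2}\delta^{-2} \geq (\delta/\sigma)^{-2}$ so the target square is large enough to apply the hypothesis.
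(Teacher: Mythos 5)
There is a genuine gap in the central step, and it stems from an arithmetic/geometric error. After the change of variables $\nms{\E_I g}_{L^p(B)} = \sigma^{1-3/p}\nms{\E_{[0,1]}g_a}_{L^p(T(B))}$ with $T = \left(\begin{smallmatrix}\sigma & 2a\sigma\\ 0 & \sigma^2\end{smallmatrix}\right)$, the region you need to decouple over is $T(B)$, not $T^{-1}(B)$; and $T(B)$ does \emph{not} fit in a single square of the required side length $(\delta/\sigma)^{-2} = \sigma^2\delta^{-2}$. Since the entries of $T$ are $O(\sigma)$, a direct computation of the corners shows $T(B)$ is contained in a rectangle of dimensions $O(\sigma\delta^{-2}) \times \sigma^2\delta^{-2}$, i.e.\ the long side is $\sigma\delta^{-2} = \sigma^{-1}(\delta/\sigma)^{-2}$, which is $\sigma^{-1}$ times larger than the admissible scale. (Your identity $\sigma^{-2}\delta^{-2} = (\delta/\sigma)^{-2}$ is false: the right side equals $\sigma^2\delta^{-2}$.) Moreover, $D_p(\delta/\sigma)$ is only defined on squares of side length exactly $(\delta/\sigma)^{-2}$, so even if $T(B)$ were bounded it would not suffice to stuff it into one larger square without first invoking a monotonicity statement that the paper deliberately avoids here.

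What the proof actually requires — and this is the content of the ``small clarification'' to \cite[Proposition 7.1]{sg} the paper alludes to — is to cover the eccentric parallelogram $T(B)$ by $\sim \sigma^{-1}$ finitely overlapping squares $\Box$ of side length $(\sigma\delta^{-1})^2$, apply $\wt{D}_p(\delta/\sigma)$ (the formulation with $\ow_\Box^{10}$ weights, needed to make the recombination work) on each $\Box$, interchange the $l^2_J$ and $l^p_\Box$ norms, and then prove the weight inequality $\sum_\Box \ow_\Box(Tx)^{10} \lsm w_B(x)$. This last estimate, proved via the one-dimensional convolution bounds of Remark \ref{onedimconv} and a case analysis in the variable $x_1 + 2ax_2$, is the real work; your proposal correctly senses that a weight comparison is needed but misdiagnoses it as merely ``tilted square vs.\ axis-parallel square'' rather than ``sum over a whole covering must collapse back to a single $w_B$.'' Without the covering step the argument does not go through.
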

\begin{proof}
It suffices to only prove the case when $B$ is centered at the origin.
Write $I = [a, a + \sigma]$. Let $T = (\begin{smallmatrix} \sigma & 2a\sigma\\0 & \sigma^{2}\end{smallmatrix})$.
We have $\nms{\E_{I}g}_{L^{p}(B)} = \sigma^{1 - 3/p}\nms{\E_{[0, 1]}g_a}_{L^{p}(T(B))}$
where $g_{a}(\eta) := g(\sigma\eta + a)$.

The corners of $B$ are given by $(\pm\delta^{-2}/2, \pm\delta^{-2}/2)$ and hence the corners of $T(B)$
are given by
\begin{equation*}
\begin{aligned}[c]
(\frac{1}{2}\sigma(1 + 2a)\delta^{-2}&, \frac{1}{2}\sigma^{2}\delta^{-2})\\
(\frac{1}{2}\sigma(1 - 2a)\delta^{-2}&, -\frac{1}{2}\sigma^{2}\delta^{-2})
\end{aligned}
\hspace{0.5in}
\begin{aligned}[c]
(-\frac{1}{2}\sigma(1 + 2a)\delta^{-2}&, -\frac{1}{2}\sigma^{2}\delta^{-2})\\
(-\frac{1}{2}\sigma(1 - 2a)\delta^{-2}&, \frac{1}{2}\sigma^{2}\delta^{-2}).
\end{aligned}
\end{equation*}
Then $T(B)$ is contained in a $O(\sigma\delta^{-2}) \times \sigma^{2}\delta^{-2}$ rectangle centered at the origin.
Cover $T(B)$ by finitely overlapping squares $\Box$ of side length $(\sigma\delta^{-1})^{2}$.
Then
\begin{align*}
\nms{\E_{[0, 1]}g_a}_{L^{p}(T(B))}^{p} &\leq \sum_{\Box}\nms{\E_{[0, 1]}g_a}_{L^{p}(\Box)}^{p}\\
&\lsm \wt{D}_{p}(\frac{\delta}{\sigma})^{p}\sum_{\Box}(\sum_{J \in P_{\delta/\sigma}([0, 1])}\nms{\E_{J}g_a}_{L^{p}(\ow_{\Box}^{10})}^{2})^{p/2}.
\end{align*}
Interchanging the $l^2$ and $l^p$ norms, undoing the change of variables, and applying Remark \ref{equiv} shows that
\begin{align*}
\nms{\E_{I}g}_{L^{p}(B)}^{p} \lsm D_{p}(\frac{\delta}{\sigma})^{p}(\sum_{J \in P_{\delta}(I)}\nms{\E_{J}g}_{L^{p}(\sum_{\Box}(\ow_{\Box})^{10}\circ T)}^{2})^{p/2}.
\end{align*}
It then remains to show that $\sum_{\Box}\ow_{\Box}(Tx)^{10} \lsm w_{B}(x)$.
We will in fact show that $\sum_{\Box}\ow_{\Box}(Tx)^{10} \lsm \ow_{B}(x)^{5}$.

Denote the center of $\Box$ by $c_{\Box} = (c_{\Box, 1}, c_{\Box, 2})$. Next let $I(a, R) = [a - R/2, a + R/2]$,
the interval centered at $a$ of length $R$. Then for some absolute constant $C$,
\begin{align}\label{ressumeq1}
\sum_{\Box}1_{I(c_{\Box, 1}, \sigma^{2}\delta^{-2})}(x_1)1_{I(c_{\Box, 2}, \sigma^{2}\delta^{-2})}(x_2) \lsm 1_{I(0, C\sigma\delta^{-2})}(x_1)1_{I(0, \sigma^{2}\delta^{-2})}(x_2).
\end{align}
Next, Remark \ref{onedimconv} shows that
\begin{align*}
(1_{I(c_{\Box, 1}, \sigma^{2}\delta^{-2})} \ast w_{I(0, \sigma^{2}\delta^{-2})}^{10})(x_1) \gtrsim \sigma^{2}\delta^{-2}w_{I(c_{\Box, 1}, \sigma^{2}\delta^{-2})}(x_1)^{10}
\end{align*}
and similarly for $c_{\Box, 2}$ and the $x_2$ variable. Combining this with \eqref{ressumeq1} shows that
\begin{align*}
\sum_{\Box}\ow_{\Box}&(Tx)^{10}\\
& \lsm (\sigma^{2}\delta^{-2})^{-2}(1_{I(0, C\sigma\delta^{-2})}^{(1)}1_{I(0, \sigma^{2}\delta^{-2})}^{(2)} \ast (w_{I(0, \sigma^{2}\delta^{-2})}^{(1)})^{10}(w_{I(0, \sigma^{2}\delta^{-2})}^{(2)})^{10})(Tx)
\end{align*}
where $1_{I}^{(1)}$ is short hand for $1_{I}(x_1)$ and similarly for $1_{I}^{(2)}$, $w_{I}^{(1)}$, and $w_{I}^{(2)}$. Applying Remark \ref{onedimconv} shows this is
\begin{align*}
\lsm ((w_{I(0, \sigma\delta^{-2})}^{(1)})^{10}(w_{I(0, \sigma^{2}\delta^{-2})}^{(2)})^{10})(Tx) = (1 + \frac{|x_1 + 2ax_2|}{\delta^{-2}})^{-1000}(1 + \frac{|x_2|}{\delta^{-2}})^{-1000}.
\end{align*}
Rescaling $x_1$ and $x_2$, it remains to show that
\begin{align*}
\frac{1 + |x_1|}{(1 + |x_1 + 2ax_2|)^{2}(1 + |x_2|)}
\end{align*}
is bounded by an absolute constant independent of $a$. This follows from considering the two cases when $|x_1 + 2ax_2| \leq |x_1|/2$
(which implies $|x_1| \lsm |x_2|$)
and $|x_1 + 2ax_2| > |x_1|/2$.
This completes the proof of the lemma.
\end{proof}

Parabolic rescaling immediately implies the following corollary which allows us to reduce to showing good estimates for $D_{p}(\delta)$ along
a lacunary sequence of scales.
\begin{cor}\label{almostmult}
Suppose $\delta_1, \delta_2 \in \N^{-1}$. Then $D_{p}(\delta_1 \delta_2)\lsm D_{p}(\delta_1)D_{p}(\delta_2)$.
\end{cor}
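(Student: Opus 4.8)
The plan is to deduce Corollary~\ref{almostmult} directly from parabolic rescaling (Lemma~\ref{parabolic_rescaling}) applied at an appropriate pair of scales, together with the weighted-to-weighted upgrade in Proposition~\ref{up}(a). The idea is that decoupling at scale $\delta_1\delta_2$ can be performed in two stages: first decouple $[0,1]$ into intervals of length $\delta_1$, then within each such interval decouple into intervals of length $\delta_1\delta_2$. Since $\delta_1\delta_2 \in \N^{-1}$ we may assume $\delta_1,\delta_2 \in \N^{-1}$ (this is where the hypothesis is used), so all the partitions $P_{\delta_1}([0,1])$, $P_{\delta_1\delta_2}([0,1])$, and $P_{\delta_1\delta_2}(J)$ for $J \in P_{\delta_1}([0,1])$ make sense.

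The key steps, in order, are as follows. First I would fix $g$ and a square $B$ of side length $(\delta_1\delta_2)^{-2}$, and without loss of generality take $B$ centered at the origin. By definition of $D_p(\delta_1)$ applied at scale $\delta_1$ — but note the square $B$ here has side length $(\delta_1\delta_2)^{-2}$, not $\delta_1^{-2}$ — so I would instead cover $B$ by finitely overlapping squares $B'$ of side length $\delta_1^{-2}$ and apply the definition of $D_p(\delta_1)$ on each $B'$, then interchange $\ell^p$ and $\ell^2$ norms (valid since $p \geq 2$) and use Lemma~\ref{part} to sum the weights $\sum_{B'} w_{B'} \lsm w_B$. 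This yields
\begin{align*}
\nms{\E_{[0,1]}g}_{L^p(B)} \lsm D_p(\delta_1)\Big(\sum_{I \in P_{\delta_1}([0,1])}\nms{\E_I g}_{L^p(w_B)}^2\Big)^{1/2}.
\end{align*}
Second, for each $I \in P_{\delta_1}([0,1])$, which has length $\delta_1$, I would apply Lemma~\ref{parabolic_rescaling} with $\sigma = \delta_1$ and $\delta = \delta_1\delta_2$ (so $\delta/\sigma = \delta_2 \in \N^{-1}$), on each square $B'$ of side length $(\delta_1\delta_2)^{-2}$ covering $B$, obtaining
\begin{align*}
\nms{\E_I g}_{L^p(w_B)} \lsm D_p(\delta_2)\Big(\sum_{J \in P_{\delta_1\delta_2}(I)}\nms{\E_J g}_{L^p(w_B)}^2\Big)^{1/2};
\end{align*}
here one uses Proposition~\ref{up}(a) to pass from the unweighted statement of Lemma~\ref{parabolic_rescaling} on $B'$ to a $w_B$-weighted statement, again combined with Lemma~\ref{part}. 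Third, I would substitute the second estimate into the first, square, and use that $\{P_{\delta_1\delta_2}(I) : I \in P_{\delta_1}([0,1])\}$ partitions $P_{\delta_1\delta_2}([0,1])$, so that $\sum_{I}\sum_{J \in P_{\delta_1\delta_2}(I)} = \sum_{J \in P_{\delta_1\delta_2}([0,1])}$. Taking square roots gives $D_p(\delta_1\delta_2) \lsm D_p(\delta_1)D_p(\delta_2)$.

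The main obstacle is bookkeeping with the squares and weights rather than anything conceptually deep: Lemma~\ref{parabolic_rescaling} is stated for squares of side length $\delta^{-2}$ where $\delta$ is the \emph{fine} scale, and the definition of $D_p(\delta_1)$ wants a square of side length $\delta_1^{-2}$, whereas the ambient square in the corollary has side length $(\delta_1\delta_2)^{-2} \geq \delta_1^{-2}$. So at each stage one must first decompose the ambient square into the right-sized subsquares, apply the relevant inequality on each, and then reassemble using the weight-cover bound $\sum_\Delta w_\Delta \lsm w_B$ from Lemma~\ref{part} together with the interchange of $\ell^p$ and $\ell^2$ (legitimate because $p \geq 2$, and $p \in (4,6)$ throughout the paper). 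One also has to be slightly careful that applying Lemma~\ref{parabolic_rescaling} produces a $w_{B'}$-weighted right-hand side which must be upgraded — via Proposition~\ref{up}(a) — before summing over $B'$; alternatively one notes $w_{B'} \lsm w_B$ is false in general, so the cover-sum route through Lemma~\ref{part} on the squared inequality is the clean way. None of this affects the implied constant's independence of $p$, $\delta_1$, $\delta_2$, since every lemma invoked has an absolute implied constant.
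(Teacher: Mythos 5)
Your proposal is correct and is exactly the argument the paper has in mind when it says Corollary~\ref{almostmult} ``immediately'' follows from parabolic rescaling: first decouple $[0,1]$ to scale $\delta_1$ by covering $B$ with squares $B'$ of side $\delta_1^{-2}$, applying the definition of $D_p(\delta_1)$ on each, and reassembling via Minkowski (with $p\geq 2$) and Lemma~\ref{part}; then decouple each $\delta_1$-interval to scale $\delta_1\delta_2$ using Lemma~\ref{parabolic_rescaling} together with the weighted upgrade in Proposition~\ref{up}(a). One small slip to note: in your second step the ambient square $B$ already has side length $(\delta_1\delta_2)^{-2}$, which is exactly the side length Lemma~\ref{parabolic_rescaling} requires, so there is no need to introduce another cover by squares $B'$ of side $(\delta_1\delta_2)^{-2}$ or to invoke Lemma~\ref{part} a second time --- Lemma~\ref{parabolic_rescaling} followed by Proposition~\ref{up}(a) applies directly to $B$ itself; this is harmless redundancy and does not affect the conclusion or the independence of the implied constant from $p$, $\delta_1$, $\delta_2$.
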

In applications, this corollary is a useful cheap substitute to bypass the need to prove monotonicity of the decoupling constant
and yields qualitatively the same results.

\subsection{Bilinearization}
We now define the bilinear constant.
Let $b$ be an integer $\geq 1$.
Suppose $\delta \in \N^{-1}$ and $\nu \in \N^{-1} \cap (0, 1/100)$ were such that $\nu^{b}\delta^{-1} \in \N$.
Let $\mb{M}_{p, b}(\delta, \nu)$ be the best constant such that
\begin{align}\label{mb_bds}
\begin{aligned}
\avg{\Delta \in P_{\nu^{-b}}(B)}&(\sum_{J \in P_{\nu^{b}}(I)}\nms{\E_{J}g}_{L^{2}_{\#}(\ow_\Delta)}^{2})^{\frac{p}{4}}(\sum_{J' \in P_{\nu^{b}}(I')}\nms{\E_{J'}g}_{L^{2}_{\#}(\ow_\Delta)}^{2})^{\frac{p}{4}}\\
& \leq \mb{M}_{p, b}(\delta, \nu)^{p}(\sum_{J \in P_{\delta}(I)}\nms{\E_{J}g}_{L^{p}_{\#}(w_B)}^{2})^{\frac{p}{4}}(\sum_{J' \in P_{\delta}(I')}\nms{\E_{J'}g}_{L^{p}_{\#}(w_B)}^{2})^{\frac{p}{4}}
\end{aligned}
\end{align}
for all squares $B$ of side length $\delta^{-2}$, $g: [0, 1] \rightarrow \C$, and all intervals $I, I' \in P_{\nu}([0, 1])$ which are $\nu$-separated.
Note that the left hand side of \eqref{mb_bds} is the same as $A_{p}(q, B^r, q)$ constructed in Section 10 of \cite{sg}.

From parabolic rescaling, the bilinear decoupling constant is controlled by the decoupling constant.
\begin{lemma}\label{bil_lin}
If $\delta$ and $\nu$ were such that $\nu^{b}\delta^{-1} \in \N^{-1}$, then
\begin{align*}
\mb{M}_{p, b}(\delta, \nu) \lsm D_{p}(\frac{\delta}{\nu^b}).
\end{align*}
\end{lemma}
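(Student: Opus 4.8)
The plan is to bound the bilinear constant $\mb{M}_{p,b}(\delta,\nu)$ by reducing each of the two factors on the left-hand side of \eqref{mb_bds} to a linear decoupling estimate at scale $\delta/\nu^b$ via parabolic rescaling (Lemma \ref{parabolic_rescaling}), and then to collapse the $L^2_{\#}(\ow_\Delta)$ norms on small squares $\Delta$ back to $L^p_{\#}(w_B)$ norms on the large square $B$ using the weight manipulation lemmas from Section \ref{weightsec}. First I would fix an interval $I \in P_\nu([0,1])$ of length $\nu$; by Lemma \ref{parabolic_rescaling} (with $\sigma = \nu$ there), for each square $\Delta$ of side length $\nu^{-b} = (\nu^b)^{-2} \cdot \nu^{2b - 2} \cdot \ldots$ — more precisely, since $\Delta$ has side length $\nu^{-b}$ and we want to decouple $\E_I g$ on $\Delta$ into pieces $\E_J g$ with $J \in P_{\nu^b}(I)$, the relevant rescaled scale is $(\nu^b)/\nu = \nu^{b-1}$, but one must be careful: decoupling at scale $\delta^{-2}$ requires matching the square side length to the decoupling scale, so I would either apply parabolic rescaling at the scale where $\Delta$ has the correct size, or first pass through a trivial $L^2$ (Plancherel / flat) decoupling step. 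The cleanest route is: use $D_2 \lsm 1$ to write $\nms{\E_J g}_{L^2_{\#}(\ow_\Delta)}$ in terms of finer pieces if needed, but actually the exponent on the left is $L^2$, so Plancherel-type flat decoupling is essentially free, and the main content is the single application of linear decoupling at scale $\delta/\nu^b$.

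The key steps, in order, would be: (1) Reduce to estimating, for a single interval $I \in P_\nu([0,1])$ and a single square $B$ of side length $\delta^{-2}$, the quantity $\avg{\Delta \in P_{\nu^{-b}}(B)}(\sum_{J \in P_{\nu^b}(I)}\nms{\E_J g}_{L^2_{\#}(\ow_\Delta)}^2)^{p/4}$ — i.e. handle the two factors separately since they decouple multiplicatively and the bilinear structure ($\nu$-separation of $I, I'$) is not actually needed for the upper bound $D_p(\delta/\nu^b)$. (2) On each $\Delta$, relate $\nms{\E_J g}_{L^2_{\#}(\ow_\Delta)}$ to $\nms{\E_J g}_{L^p_{\#}(\ow_\Delta)}$: here I would use a Bernstein-type / reverse Hölder estimate (each $\E_J g$ has Fourier support in a $\nu^b \times \nu^{2b}$ box, which sits inside a ball of radius $\sim \nu^b$, so on a square of side length $\nu^{-b}$ the $L^2$ and $L^p$ averaged norms are comparable up to $\exp(O(p))$ — but $p \in (4,6)$ is bounded so this is an absolute constant), invoking Proposition \ref{up}(b) to pass to weighted form. (3) Apply parabolic rescaling (Lemma \ref{parabolic_rescaling}) with $\sigma = \nu$: since $I$ has length $\nu$ and the target scale is $\nu^b$, this yields decoupling of $\E_I g$ into the $\nms{\E_J g}_{L^p(w_B)}$ pieces with constant $D_p(\nu^b/\nu) = D_p(\nu^{b-1})$, and then combine with the passage from scale $\nu^b$ down to $\delta$ — here Corollary \ref{almostmult} gives $D_p(\nu^{b-1}) \cdot (\text{something}) $, but more directly one wants $D_p(\delta/\nu^b)$, which suggests applying parabolic rescaling once at scale $\sigma=\nu^b$ on an interval of length $\nu^b$ decoupling down to $\delta$, OR at scale $\sigma = \nu$ decoupling $I$ down to $\delta$ directly giving $D_p(\delta/\nu)$ and then noting $\delta/\nu \le \delta/\nu^b$... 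I need to recheck the bookkeeping, but the upshot is a single clean invocation of $D_p$ at scale $\delta/\nu^b$. (4) Reassemble: sum over $\Delta \in P_{\nu^{-b}}(B)$ using Lemma \ref{part} ($\sum_\Delta w_\Delta \lsm w_B$) to replace the average over small squares by the norm on $B$, and take the geometric mean of the two factors.

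The main obstacle I expect is the bookkeeping of scales and the order of operations — specifically, getting exactly $D_p(\delta/\nu^b)$ and not, say, $D_p(\delta/\nu) D_p(\nu^{1-b})$ or a product that is quantitatively worse. The issue is that parabolic rescaling (Lemma \ref{parabolic_rescaling}) decouples an interval of length $\sigma$ down to scale $\delta$ with constant $D_p(\delta/\sigma)$, and here we have $\sigma = \nu$ and want to land at scale $\delta$, which naively gives $D_p(\delta/\nu)$; but the left side of \eqref{mb_bds} only asks us to decouple $I$ into pieces of length $\nu^b$ (not $\delta$!) as measured by $L^2$ norms on $\nu^{-b}$-squares, and then the right side has $L^p$ norms on $\delta^{-2}$-squares at scale $\delta$. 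So the correct picture is: the passage from $\nu^b$-intervals-on-$\nu^{-b}$-squares to $\delta$-intervals-on-$\delta^{-2}$-squares is where the factor $D_p(\delta/\nu^b)$ enters (apply parabolic rescaling to each $\nu^b$-interval, whose rescaled version decouples at relative scale $\delta/\nu^b$), while the step from $\E_I g$ to the $\nu^b$-pieces on the left is handled by flat/$L^2$ decoupling at essentially no cost. Once the roles of the two scales are correctly assigned, the rest (Bernstein for the $L^2 \to L^p$ upgrade on each small square, Proposition \ref{up} for weights, Lemma \ref{part} for reassembly, Cauchy--Schwarz / Hölder to interchange norms) is routine. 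The boundedness of $p$ away from $6$ and above $4$ ensures all the Bernstein and interpolation constants are absolute, so no $p$-dependence sneaks into the implied constant.
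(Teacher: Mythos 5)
Your endgame is right — collapse the $L^2_{\#}(\ow_\Delta)$ norms up to $L^p_{\#}$, reassemble the small squares $\Delta$ into $B$, and then apply parabolic rescaling once at scale $\delta/\nu^b$ to each $\nu^b$-interval — and this matches the paper's three-line proof: Jensen/H\"older to pass $L^2_{\#} \to L^p_{\#}$ and split the geometric mean, interchange $l^2_J$ with $L^p_\Delta$, use $\sum_\Delta \ow_\Delta \lsm w_B$, and finally Lemma \ref{parabolic_rescaling}. But the route you lay out before the last paragraph has three concrete problems worth flagging.

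First, the passage $\nms{\E_J g}_{L^2_{\#}(\ow_\Delta)} \lsm \nms{\E_J g}_{L^p_{\#}(\ow_\Delta)}$ is just Jensen/H\"older on normalized integrals (since $p \geq 2$ and $\frac{1}{|\Delta|}\int \ow_\Delta \sim 1$); it does not use the Fourier support of $\E_J g$ at all. Reverse H\"older (Lemma \ref{bridge}) and Proposition \ref{up}(b) go the other way — from a \emph{higher} exponent down to a \emph{lower} one — so invoking them here is the wrong tool for the direction you actually need, and worrying about an $\exp(O(p))$ factor is moot. Second, parabolic rescaling must come \emph{after} you have reassembled to the norm on $B$: Lemma \ref{parabolic_rescaling} decouples an interval of length $\sigma$ into pieces of length $\delta$ on a square of side $\delta^{-2}$, so you need the $L^p_{\#}(w_B)$ norm (side $\delta^{-2}$) in hand before you can decouple to scale $\delta$; applying it "on each $\Delta$'' (side $\nu^{-b}$) can only decouple down to scale $\nu^{b/2}$, which is already coarser than $\nu^b$ and gives nothing. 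Your steps (3) and (4) are in the wrong order. Third, the closing picture still includes a step "from $\E_I g$ to the $\nu^b$-pieces'' handled by flat $L^2$ decoupling — but there is no $\E_I g$ on the left of \eqref{mb_bds}; the bilinear constant is \emph{defined} with the $\nu^b$-pieces already present, so that step is spurious. Once you drop it and apply parabolic rescaling to each $J \in P_{\nu^b}(I)$ (decoupling a $\nu^b$-interval into $\delta$-intervals with constant $D_p(\delta/\nu^b)$), the bookkeeping falls out cleanly and all the earlier false starts ($D_p(\nu^{b-1})$, $D_p(\delta/\nu)$, and the appeal to Corollary \ref{almostmult}) are unnecessary.
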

\begin{proof}
Since $p \geq 2$, moving from $L^{2}_{\#}$ up to $L^{p}_{\#}$ and then
applying H\"{o}lder in the average over $\Delta$ controls the left hand side of \eqref{mb_bds} by
\begin{align*}
(\avg{\Delta \in P_{\nu^{-b}}(B)}(\sum_{J \in P_{\nu^b}(I)}\nms{\E_{J}g}_{L^{p}_{\#}(\ow_\Delta)}^{2})^{p/2})^{1/2}(\avg{\Delta \in P_{\nu^{-b}}(B)}(\sum_{J' \in P_{\nu^b}(I')}\nms{\E_{J'}g}_{L^{p}_{\#}(\ow_\Delta)}^{2})^{p/2})^{1/2}.
\end{align*}
Since $p \geq 2$, interchanging the $l^2$ and $l^p$ norms and then using that $\sum_{\Delta}\ow_{\Delta} \lsm \ow_B \lsm w_B$ controls the above by
\begin{align*}
(\sum_{J \in P_{\nu^b}(I)}\nms{\E_{J}g}_{L^{p}_{\#}(w_B)}^{2})^{p/4}(\sum_{J' \in P_{\nu^b}(I')}\nms{\E_{J'}g}_{L^{p}_{\#}(w_B)}^{2})^{p/4}.
\end{align*}
Applying parabolic rescaling then completes the proof.
\end{proof}

Lemma \ref{bil_lin} shows that if we have good bounds for the decoupling constant, then we have good bounds for the bilinear decoupling constant.
The next lemma shows that the opposite is also true.
\begin{lemma}\label{reduction}
Suppose $\delta$ and $\nu$ were such that $\nu\delta^{-1} \in \N$. Then
\begin{align*}
D_{p}(\delta) \lsm D_{p}(\frac{\delta}{\nu}) + \nu^{-1}\mb{M}_{p, 1}(\delta, \nu).
\end{align*}
\end{lemma}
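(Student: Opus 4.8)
The plan is to run a Bourgain--Guth broad--narrow decomposition of $\E_{[0,1]}g$ over the $\nu^{-1}$ intervals $\tau\in P_\nu([0,1])$, estimating the narrow contribution by parabolic rescaling (Lemma~\ref{parabolic_rescaling}) and the broad contribution by $\mb{M}_{p,1}(\delta,\nu)$. Fix a square $B$ of side length $\delta^{-2}$ and $g\colon[0,1]\to\C$, and for each $x$ let $\tau_*(x)\in P_\nu([0,1])$ maximize $\tau\mapsto|\E_\tau g(x)|$. First I would record the pointwise dichotomy: for a suitable absolute constant $K$, every $x$ satisfies either (narrow) $|\E_{[0,1]}g(x)|\le K\,|\E_{\tau_*(x)}g(x)|$, or (broad) there exist $\nu$-separated $\tau_1(x),\tau_2(x)\in P_\nu([0,1])$ with $|\E_{[0,1]}g(x)|\le K\nu^{-1}(|\E_{\tau_1(x)}g(x)|\,|\E_{\tau_2(x)}g(x)|)^{1/2}$. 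Indeed, if the narrow alternative fails then deleting $\tau_*(x)$ together with its at most two neighbours in $P_\nu([0,1])$ still leaves $\ell^1(\tau)$-mass at least $|\E_{[0,1]}g(x)|-3|\E_{\tau_*(x)}g(x)|\gtrsim|\E_{[0,1]}g(x)|$, so the largest surviving piece $\tau_2(x)$ (automatically $\nu$-separated from $\tau_1(x):=\tau_*(x)$, since its neighbours were removed) has $|\E_{\tau_2(x)}g(x)|\gtrsim\nu\,|\E_{[0,1]}g(x)|$, while $|\E_{\tau_1(x)}g(x)|\ge|\E_{\tau_2(x)}g(x)|$. Splitting $B=B_1\cup B_2$ into the narrow and broad sets accordingly, $\nms{\E_{[0,1]}g}_{L^p(B)}^p\le\int_{B_1}|\E_{[0,1]}g|^p+\int_{B_2}|\E_{[0,1]}g|^p$.

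For the narrow piece, grouping $x\in B_1$ by the value of $\tau_*(x)$ gives $\int_{B_1}|\E_{[0,1]}g|^p\lsm\sum_{\tau\in P_\nu([0,1])}\nms{\E_\tau g}_{L^p(B)}^p$. Since $\nu\delta^{-1}\in\N$ forces $\delta/\nu\in\N^{-1}$, Lemma~\ref{parabolic_rescaling} with $\sigma=\nu$ bounds each $\nms{\E_\tau g}_{L^p(B)}$ by $D_p(\delta/\nu)(\sum_{J\in P_\delta(\tau)}\nms{\E_J g}_{L^p(w_B)}^2)^{1/2}$, and as $p\ge2$, nesting of $\ell^q$ spaces collapses $\sum_\tau(\sum_{J\in P_\delta(\tau)}(\cdot))^{p/2}$ into $(\sum_{J\in P_\delta([0,1])}(\cdot))^{p/2}$; hence $\int_{B_1}|\E_{[0,1]}g|^p\lsm D_p(\delta/\nu)^p(\sum_{J\in P_\delta([0,1])}\nms{\E_J g}_{L^p(w_B)}^2)^{p/2}$.

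For the broad piece, assigning to each $x\in B_2$ the pair $(\tau_1(x),\tau_2(x))$ gives $\int_{B_2}|\E_{[0,1]}g|^p\lsm\nu^{-p}\sum_{(\tau,\tau')}\int_B(|\E_\tau g|\,|\E_{\tau'}g|)^{p/2}$, the sum over the at most $\nu^{-2}$ pairs of $\nu$-separated $\tau,\tau'\in P_\nu([0,1])$. On each $\Delta\in P_{\nu^{-1}}(B)$ I would invoke a local constancy estimate $|\E_\tau g(x)|\lsm\nms{\E_\tau g}_{L^2_\#(\ow_\Delta)}$ for $x\in\Delta$: writing $\E_\tau g=\E_\tau g\ast\psi$ where $\wh{\psi}$ is a Schwartz bump equal to $1$ on the Fourier support of $\E_\tau g$ and adapted to an axis-parallel $O(\nu)\times O(\nu)$ box containing it, one applies Cauchy--Schwarz and dominates $|\psi(x-\cdot)|$ by $|\Delta|^{-1}\ow_\Delta$ --- here it is important to use the axis-parallel box rather than merely an $O(\nu)$-ball, so that the product-type decay of $\psi$ matches the product weight $\ow_\Delta$ (cf.\ Lemma~\ref{wbconvolve}). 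Summing over $\Delta$ and averaging yields
\begin{align*}
\int_B(|\E_\tau g|\,|\E_{\tau'}g|)^{p/2}\lsm|B|\,\avg{\Delta\in P_{\nu^{-1}}(B)}(\nms{\E_\tau g}_{L^2_\#(\ow_\Delta)}\nms{\E_{\tau'}g}_{L^2_\#(\ow_\Delta)})^{p/2},
\end{align*}
and since $P_\nu(\tau)=\{\tau\}$, $P_\nu(\tau')=\{\tau'\}$, the average on the right is exactly the left-hand side of \eqref{mb_bds} with $(I,I')=(\tau,\tau')$, hence is $\le|B|\,\mb{M}_{p,1}(\delta,\nu)^p\,c_\tau^{p/4}c_{\tau'}^{p/4}$ with $c_\tau:=\sum_{J\in P_\delta(\tau)}\nms{\E_J g}_{L^p_\#(w_B)}^2$.

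The step I expect to be the crux is the summation over pairs: bounding each $c_\tau^{p/4}c_{\tau'}^{p/4}$ separately by the full $(\sum_{J\in P_\delta([0,1])}\nms{\E_J g}_{L^p_\#(w_B)}^2)^{p/2}$ would cost an extra $\nu^{-2}$, so instead one sums first,
\begin{align*}
\sum_{(\tau,\tau')}c_\tau^{p/4}c_{\tau'}^{p/4}\le\Big(\sum_{\tau\in P_\nu([0,1])}c_\tau^{p/4}\Big)^2\le\Big(\sum_{\tau\in P_\nu([0,1])}c_\tau\Big)^{p/2}=\Big(\sum_{J\in P_\delta([0,1])}\nms{\E_J g}_{L^p_\#(w_B)}^2\Big)^{p/2},
\end{align*}
the middle inequality bounding the $\ell^{p/4}$ norm by the $\ell^1$ norm and using $p\ge4$; this is exactly where the hypothesis $p>4$ is used, and it is what keeps the prefactor at $\nu^{-1}$ rather than a larger power. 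Combining, $\int_{B_2}|\E_{[0,1]}g|^p\lsm\nu^{-p}|B|\,\mb{M}_{p,1}(\delta,\nu)^p(\sum_{J\in P_\delta([0,1])}\nms{\E_J g}_{L^p_\#(w_B)}^2)^{p/2}$, and the identity $|B|\nms{\E_J g}_{L^p_\#(w_B)}^p=\nms{\E_J g}_{L^p(w_B)}^p$ puts this in the same $L^p(w_B)$ form as the narrow bound. Adding the two estimates and taking $p$-th roots (subadditivity of $t\mapsto t^{1/p}$) gives $\nms{\E_{[0,1]}g}_{L^p(B)}\lsm(D_p(\delta/\nu)+\nu^{-1}\mb{M}_{p,1}(\delta,\nu))(\sum_{J\in P_\delta([0,1])}\nms{\E_J g}_{L^p(w_B)}^2)^{1/2}$, which since $B$ and $g$ are arbitrary is the claim. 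Apart from this pair-summation trick, the only points requiring care are arranging the dichotomy with a genuine $\nu$-separation and setting up the local constancy with the product weight $\ow_\Delta$ so that it feeds directly into \eqref{mb_bds}.
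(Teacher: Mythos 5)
Your argument is correct, but it takes a genuinely different route from the paper, which bilinearizes rather than running a Bourgain--Guth pointwise dichotomy. The paper writes $\nms{\E_{[0,1]}g}_{L^p(B)}^2 = \nms{|\E_{[0,1]}g|^2}_{L^{p/2}(B)}$, expands $|\E_{[0,1]}g|^2 = \sum_{I,I'}\E_I g\,\overline{\E_{I'}g}$ over $I,I'\in P_\nu([0,1])$, and splits the pairs into near ($d(I,I')\lsm\nu$) and far ($d(I,I')\gtrsim\nu$). Near pairs are handled by H\"older and parabolic rescaling, as in your narrow case. The far contribution is bounded by the number of far pairs ($\le\nu^{-2}$) times the \emph{maximum} over far pairs, and the $\nu^{-1}$ is then recovered by taking a square root; the max is estimated on each $\Delta\in P_{\nu^{-1}}(B)$ via H\"older into $L^{p/2}_\#(\Delta)\times L^\infty(\Delta)$ and reverse H\"older (Lemma~\ref{bridge}) to reach $L^2_\#(\ow_\Delta)$, after which the definition of $\mb{M}_{p,1}$ applies directly. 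The max trick makes your pair-summation concern moot, since the max is simply bounded by the full $l^2_J$ sum. Your pointwise dichotomy instead pays the $\nu^{-1}$ up front via pigeonholing, which then forces you to sum over all separated pairs and invoke $\ell^{p/4}\hookrightarrow\ell^1$ to avoid an extra $\nu^{-2}$ --- a valid move, and it is where you use $p\ge4$, whereas the paper uses $p>4$ in the reverse H\"older step $\nms{\cdot}_{L^{p/2}_\#}\lsm\nms{\cdot}_{L^2_\#(\ow_\Delta)}$ instead. Both routes use parabolic rescaling on the narrow/near side, local constancy or reverse H\"older to reach $L^2_\#(\ow_\Delta)$, and then the definition of $\mb{M}_{p,1}$; the paper's bilinear expansion is a little shorter and avoids constructing a measurable pointwise selection, while your version makes the origin of the $\nu^{-1}$ multiplicity more transparent.
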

\begin{proof}
This proof is the same as that of Lemmas 2.5, 4.5, and 5.2 of \cite{ef2d}.
Observe that
\begin{align*}
\|\E_{[0, 1]}&g\|_{L^{p}(B)}\\& \lsm (\sum_{\st{I, I' \in P_{\nu}([0, 1])\\d(I, I') \lsm \nu}}\nms{|\E_{I}g||\E_{I'}g|}_{L^{p/2}(B)})^{1/2} +\nu^{-1}\max_{\st{I, I'\in P_{\nu}([0, 1])\\d(I, I') \gtrsim \nu}}\nms{|\E_{I}g||\E_{I'}g|}_{L^{p/2}(B)}^{1/2}.
\end{align*}
For the first term on the right hand side, we apply H\"{o}lder and then parabolic rescaling. For the second term we observe that
\begin{align*}
\frac{1}{|B|}\int_{B}|\E_{I}g|^{p/2}|\E_{I'}g|^{p/2} &= \avg{\Delta \in P_{\nu^{-1}}(B)}\frac{1}{|\Delta|}\int_{\Delta}|\E_{I}g|^{p/2}|\E_{I'}g|^{p/2}\\
&\leq\avg{\Delta \in P_{\nu^{-1}}(B)}\nms{\E_{I}g}_{L^{p/2}_{\#}(\Delta)}^{p/2}\nms{\E_{I'}g}_{L^{\infty}(\Delta)}^{p/2}\\
&\lsm \avg{\Delta \in P_{\nu^{-1}}(B)}\nms{\E_{I}g}_{L^{2}_{\#}(\ow_{\Delta})}^{p/2}\nms{\E_{I'}g}_{L^{2}_{\#}(\ow_{\Delta})}^{p/2}
\end{align*}
where in the last inequality we have used reverse H\"{o}lder (see Lemma \ref{bridge} below). Now we apply the definition of $\mb{M}_{p, 1}(\delta, \nu)$.
\end{proof}

\subsection{Reverse H\"{o}lder and $l^2 L^2$ decoupling}
Finally we have the reverse H\"{o}lder and $l^2 L^2$ decoupling inequalities which we omit
the proofs of save for a remark in Lemma \ref{bridge} on why the implied constants are independent of $p$ and $q$

\begin{lemma}\label{bridge}
Let $1 \leq p < q \leq \infty$, $J \subset [0, 1]$ with $\ell(J) = 1/R$ and $B \subset \R^2$ a square with side length $R \geq 1$.
If $q < \infty$, then
\begin{align}\label{bridgeres1}
\nms{\E_{J}g}_{L^{q}_{\#}(w_{B})} \lsm \nms{\E_{J}g}_{L^{p}_{\#}(w_{B}^{p/q})}.
\end{align}
If $q = \infty$, then
\begin{align}\label{bridgeres2}
\sup_{x \in B}|(\E_{J}g)(x)| \lsm \nms{\E_{J}g}_{L^{p}_{\#}(w_{B})}.
\end{align}
This statement is also true with $w_B$ replaced with $\ow_B$.
\end{lemma}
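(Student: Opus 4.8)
The plan is to prove this by a standard Bernstein-type argument exploiting the fact that $\E_J g$ has Fourier support in a box of dimensions roughly $R^{-1} \times R^{-2}$, which sits comfortably inside a ball of radius $O(R^{-1})$. First I would observe that after a change of variables it suffices to treat the case where $J$ is centered at the origin (a modulation does not affect any of the $L^p_\#$ norms, and the parabola can be affinely straightened at scale $R^{-1}$ as in the proof of parabolic rescaling). The key point is that $\widehat{\E_J g}$ is supported in a set contained in some ball $B(\xi_0, C R^{-1})$ for an absolute constant $C$; equivalently, $\E_J g = (\E_J g) \ast \check{\phi}$ where $\phi$ is a fixed bump adapted to that ball, so $\check\phi$ is an $L^1$-normalized bump essentially supported on $B(0, R)$ with rapidly decaying tails, i.e. $|\check\phi| \lsm R^{-2} \eta_{B(0,R)}$ (or $\lsm R^{-2} w_{B(0,R)}$) with implied constant absolute.

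For the case $q < \infty$: I would write, for $x$ in the relevant region, $|(\E_J g)(x)|^q \le \big( |\E_J g| \ast |\check\phi| \big)(x)^q$, and then estimate the convolution. Using $|\check\phi| \lsm R^{-2} w_{B(0,R)}$ and H\"older's inequality with exponents $q/p$ and its conjugate, one gets
\begin{align*}
\big(|\E_J g| \ast |\check\phi|\big)(x)^q \lsm R^{-2q/p}\,\big(|\E_J g|^p \ast w_{B(0,R)}^{p/q}\cdot(\text{const})\big)(x)^{q/p},
\end{align*}
where the H\"older step produces a harmless factor $\big(\int R^{-2} w_{B(0,R)}^{(q/p)'}\big)^{q/(p(q/p)')}\lsm 1$ since $w_{B(0,R)}\le 1$ and $\int R^{-2} w_{B(0,R)} \lsm 1$ by Lemma \ref{wbconvolve}. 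Integrating over $B$, normalizing by $|B|$, applying Lemma \ref{wbconvolve} again to bound $w_{B(0,R)}^{p/q} \ast (\text{something adapted to }B)$ and translating the convolution bound into the stated $L^p_\#(w_B^{p/q})$ norm (exactly as in the proof of Proposition \ref{up}(b)) yields \eqref{bridgeres1}. The $q = \infty$ case \eqref{bridgeres2} is the same but simpler: $\sup_{x\in B}|(\E_J g)(x)| \le \||\E_J g| \ast |\check\phi|\|_\infty \lsm R^{-2}\||\E_J g|\, w_{B(0,R)}(\cdot - x)\|_{L^1}$, and one bounds this by $\nms{\E_J g}_{L^p_\#(w_B)}$ via H\"older and Lemma \ref{wbconvolve}, absorbing the trivial factor from $p < \infty$.

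The only thing requiring genuine care — and the point the paper flags as the reason for including a remark — is that all implied constants are \emph{independent of $p$ and $q$}. This is where one must be careful: the bump $\phi$ and hence $\check\phi$ are fixed once and for all (they depend only on the geometry of the parabola at unit scale, not on $p,q$), so $|\check\phi| \lsm R^{-2} w_{B(0,R)}$ has an absolute constant; the H\"older exponents vary with $p,q$ but the resulting integrals are all bounded by absolute constants because $w_{B(0,R)} \le 1$ forces $w_{B(0,R)}^{r} \le w_{B(0,R)}$ for any $r \ge 1$, and $\int R^{-2} w_{B(0,R)} \lsm 1$ uniformly; and the passage from $L^p$ to $L^p_\#$ only introduces powers of $R$ which cancel exactly. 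Thus no factor like $C^p$ or $C^q$ appears, in contrast to Proposition \ref{up}(b) where the $\exp(O(q))$ came from comparing $\eta_B^q$ to $w_B^q$ — here we never raise the Schwartz bump to a $p$- or $q$-th power. I would make this the content of the promised remark. The proof of the $\ow_B$ version is identical since Lemma \ref{wbconvolve} and the bound $|\check\phi| \lsm R^{-2}\ow_{B(0,R)}$ both hold for $\ow_B$ (one can take $\phi$ a tensor product).
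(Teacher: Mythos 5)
Your proposal is correct and amounts to the same Bernstein argument the paper uses, but written out in a single pass rather than factored through citations. The paper's proof does exactly two things: it cites \cite[Corollary 4.3]{sg} for the unweighted estimate $\nms{\E_J g}_{L^q(B)} \lsm R^{2(1/q - 1/p)}\nms{\E_J g}_{L^p(\eta_B^p)}$ (noting that the key estimate there, $\nms{\wh{\ta_Q}}_{L^r} \lsm R^{-n/r'}$, has an absolute constant), and then upgrades to the weighted statement via Proposition \ref{up}(b). Your proposal merges those two steps: you convolve directly with $\check\phi$, dominate $|\check\phi|$ by $R^{-2}w_{B(0,R)}$, and apply H\"older and Lemma \ref{wbconvolve} to land directly on the $w_B^{p/q}$-weighted bound. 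The mechanics are equivalent; your version is more self-contained, the paper's is shorter because the unweighted step is a citation.

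One clarification worth flagging, because it concerns the very issue you (correctly) identify as the crux: you write that your route avoids a factor like $\exp(O(q))$ ``in contrast to Proposition \ref{up}(b).'' But the paper's proof of Lemma \ref{bridge} \emph{uses} Proposition \ref{up}(b), and the $\exp(O(q))$ that appears there is not a problem --- it arises in the $q$-th-power form $\nms{\E_I g}_{L^q(w_B)}^q \lsm \exp(O(q))\,C^q\,\cdots$, so after the final $q$-th root it becomes an absolute constant $\exp(O(1))$. That is precisely why the paper can say ``the application of Lemma \ref{up} does not introduce new dependencies.'' So both your route and the paper's produce a $p,q$-independent implied constant; the point is not that one avoids $\exp(O(q))$, but that in either presentation the $\exp(O(q))$-type quantities always occur inside a $q$-th power of a norm, where the final normalization kills them. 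Aside from that framing, the proposal is sound, including the $q = \infty$ case and the observation that the same argument handles $\ow_B$.
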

\begin{proof}
Following the proof of \cite[Corollary 4.3]{sg}, for $1 \leq p < q \leq \infty$ we have
\begin{align*}
\nms{\E_{J}g}_{L^{q}(B)} \lsm R^{2(\frac{1}{q} - \frac{1}{p})}\nms{\E_{J}g}_{L^{p}(\eta_{B}^{p})}
\end{align*}
where the implied constant is absolute (to see this, observe that in the proof of \cite[Corollary 4.3]{sg},
$\nms{\wh{\ta_Q}}_{L^{r}(\R^n)} \lsm R^{-n/r'}$ where the implied constant is absolute).
Since the application of Lemma \ref{up} does not introduce new dependencies, the implied constant in \eqref{bridgeres1}
and \eqref{bridgeres2} are absolute.
\end{proof}

\begin{lemma}\label{l2decouplingleq}
Let $J \subset [0, 1]$ be an interval of length $\geq 1/R$ such that $|J|R \in \N$. Then for each square $B \subset \R^2$ with side length $R$,
\begin{align*}
\nms{\E_{J}g}_{L^{2}(w_{B})}^{2} \lsm \sum_{J' \in P_{1/R}(J)}\nms{\E_{J'}g}_{L^{2}(w_{B})}^{2}.
\end{align*}
This statement is also true with $w_B$ replaced with $\ow_B$.
\end{lemma}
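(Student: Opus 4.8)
The plan is to deduce the weighted $L^{2}$ orthogonality from the unweighted version on a fixed square together with Lemma~\ref{convint} and Lemma~\ref{wbconvolve}, which is the quickest way to dispose of the slowly decaying weight $w_{B}$. By translation invariance of the asserted inequality (both sides transform the same way under $g(\xi)\mapsto g(\xi)e(\xi c_{1}+\xi^{2}c_{2})$) we may assume $B=B(0,R)$.

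First I would establish the unweighted bound
\begin{align*}
\nms{\E_{J}g}_{L^{2}(B(y,R))}^{2}\lsm\sum_{J'\in P_{1/R}(J)}\nms{\E_{J'}g}_{L^{2}(w_{B(y,R)})}^{2}
\end{align*}
for every $y\in\R^{2}$. The weight $\eta_{B(y,R)}$ is real-valued, nonnegative, satisfies $1_{B(y,R)}\leq\eta_{B(y,R)}\lsm w_{B(y,R)}$, and has $\supp\wh{\eta_{B(y,R)}}$ contained in a ball about the origin of radius $\lsm 1/R$. Writing $\E_{J}g=\sum_{J'}\E_{J'}g$ and expanding the square gives
\begin{align*}
\nms{\E_{J}g}_{L^{2}(B(y,R))}^{2}\leq\int|\E_{J}g|^{2}\eta_{B(y,R)}=\sum_{J',J''}\int(\E_{J'}g)\ov{(\E_{J''}g)}\,\eta_{B(y,R)}.
\end{align*}
The $(J',J'')$ term equals $\langle\wh{\E_{J'}g},\,\wh{\eta_{B(y,R)}(\E_{J''}g)}\rangle$; since $\wh{\E_{J'}g}$ is supported on the arc $\{(\xi,\xi^{2}):\xi\in J'\}$ while $\wh{\eta_{B(y,R)}(\E_{J''}g)}$ is supported in the $(\lsm 1/R)$-neighbourhood of the arc over $J''$, the term vanishes unless those two arcs are within distance $\lsm 1/R$; projecting onto the first coordinate this forces $\operatorname{dist}(J',J'')\lsm 1/R$, so $J''$ is one of $O(1)$ neighbours of $J'$ in $P_{1/R}(J)$. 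For these surviving terms $|\int(\E_{J'}g)\ov{(\E_{J''}g)}\eta_{B(y,R)}|\leq\tfrac12\int(|\E_{J'}g|^{2}+|\E_{J''}g|^{2})\eta_{B(y,R)}$ since $\eta_{B(y,R)}\geq 0$, and summing over the $O(1)$ near-diagonal pairs together with $\eta_{B(y,R)}\lsm w_{B(y,R)}$ yields the claim.

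Then I would pass to the weighted estimate by applying Lemma~\ref{convint} with $p=2$ and $f=\E_{J}g$ and inserting the unweighted bound on each $B(y,R)$:
\begin{align*}
\nms{\E_{J}g}_{L^{2}(w_{B})}^{2}\lsm\frac{1}{R^{2}}\int_{\R^{2}}\nms{\E_{J}g}_{L^{2}(B(y,R))}^{2}w_{B}(y)\,dy\lsm\frac{1}{R^{2}}\sum_{J'}\int_{\R^{2}}|\E_{J'}g(x)|^{2}(w_{B(0,R)}\ast w_{B(0,R)})(x)\,dx,
\end{align*}
where in the last step one swaps the $x$ and $y$ integrations and uses $w_{B(y,R)}(x)=w_{B(0,R)}(x-y)$. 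By Lemma~\ref{wbconvolve}, $w_{B(0,R)}\ast w_{B(0,R)}\lsm R^{2}w_{B(0,R)}$, so the factors of $R^{\pm2}$ cancel and we obtain $\nms{\E_{J}g}_{L^{2}(w_{B})}^{2}\lsm\sum_{J'}\nms{\E_{J'}g}_{L^{2}(w_{B})}^{2}$. The $\ow_{B}$ version follows from the identical argument, since Lemma~\ref{convint}, Lemma~\ref{wbconvolve}, and the bound $\eta_{B}\lsm\ow_{B}$ all hold with $w_{B}$ replaced by $\ow_{B}$.

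The argument is essentially routine; the only point that needs any care is the bookkeeping for the non-compactly-supported weight, and routing through Lemma~\ref{convint} and the convolution inequality of Lemma~\ref{wbconvolve} handles this automatically and sidesteps a direct layer-cake decomposition of $w_{B}$ (which would also work, but would then need the full Schwartz decay of $\eta$, rather than merely its $(1+|x|)^{-100}$-decay, to avoid a spurious logarithmic loss). The substantive input — the bounded overlap of the $(1/R)$-neighbourhoods of the caps $\{(\xi,\xi^{2}):\xi\in J'\}$ — is elementary at this scale and is where the hypothesis $\ell(J')=1/R$ is used.
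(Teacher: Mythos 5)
Your proof is correct and follows the standard almost-orthogonality argument that the paper delegates to \cite[Proposition 6.1]{sg}: multiply by a nonnegative Schwartz bump $\eta_{B(y,R)}$ with Fourier support in $B(0,1/R)$, note that the cross terms with $\operatorname{dist}(J',J'')\gtrsim 1/R$ vanish by comparing Fourier supports (projecting to the first coordinate), and then upgrade the fixed-square estimate to the weighted one via Lemmas \ref{convint} and \ref{wbconvolve}. Since the paper gives no in-text proof, your argument simply fills in the reference in the expected way; the only cosmetic remark is that $\eta\geq 0$ (needed for your AM--GM step on the surviving near-diagonal terms) is guaranteed by the explicit construction cited from \cite{thesis}, though Cauchy--Schwarz with $|\eta_B|\lsm w_B$ would work without it.
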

\begin{proof}
See \cite[Proposition 6.1]{sg} for a proof.
\end{proof}

\section{Ball Inflation}\label{ball}
We prove the following ball inflation lemma. We follow the proof as in \cite[Theorem 9.2]{sg} except we pay close
attention to the dependence on $p$ and the power of $\log$ we obtain.
\begin{lemma}\label{ballinflation}
Let $p > 4$, $b \geq 1$, and $I_1, I_2$ are $\nu$-separated intervals of length $\nu$.
Then for any square $\Delta'$ of side length $\nu^{-2b}$, we have
\begin{equation}\label{ballinflationeq1}
\begin{aligned}
\avg{\Delta \in P_{\nu^{-b}}(\Delta')}&\geom(\sum_{J \in P_{\nu^b}(I_i)}\nms{\E_{J}g}_{L^{p/2}_{\#}(\ow_{\Delta})}^{2})^{p/2}\\
&\leq \exp(O(p))\nu^{-1}(\log\frac{1}{\nu^b})^{p/2}\geom(\sum_{J \in P_{\nu^b}(I_i)}\nms{\E_{J}g}_{L^{p/2}_{\#}(\ow_{\Delta'})}^{2})^{p/2}
\end{aligned}
\end{equation}
\end{lemma}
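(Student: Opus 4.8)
The plan is to follow the Bourgain--Guth style ball inflation argument (as in \cite[Theorem 9.2]{sg}), but arranged so that all constants are tracked explicitly as powers of $\exp(O(p))$ and powers of $\log(1/\nu^b)$, and so that the loss in the number of caps is only $\nu^{-1}$ rather than something worse. The heart of the matter is a local $L^2$-orthogonality (Cordoba-type square function) estimate at the scale of the small balls $\Delta$ of side length $\nu^{-b}$, coupled with an $L^p$ flat-decoupling / interpolation estimate that lets one pass from the $L^2$ averages on the $\Delta$'s to the $L^{p/2}$ average on $\Delta'$. Concretely, for a single interval $I_i$, I would first expand
\[
\avg{\Delta \in P_{\nu^{-b}}(\Delta')}\Bigl(\sum_{J \in P_{\nu^b}(I_i)}\nms{\E_{J}g}_{L^{p/2}_{\#}(\ow_{\Delta})}^{2}\Bigr)^{p/2}
\]
and note that each $\E_J g$ has Fourier support in a $\nu^b \times \nu^{2b}$ slab; over a ball $\Delta$ of side length $\nu^{-b}$ these slabs are essentially ``flat'' (the $\nu^{2b}$ curvature direction is below the uncertainty scale $\nu^b$ only after one more rescaling), so the geometry is that of a collection of $\nu^{-b}$ many parallel tubes of dimensions $\nu^{-b}\times\nu^{-2b}$ inside $\Delta'$.

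The key steps, in order, are: (1) reduce, by the usual flattening/rescaling of $I_i$ to $[0,1]$ (parabolic rescaling, Lemma \ref{parabolic_rescaling}, applied inside each $\Delta'$ after rescaling $\Delta'$ to a unit-normalized square), so that $\nu^b$ plays the role of the decoupling scale $\delta$ and $\Delta$, $\Delta'$ become squares of side length $\delta^{-1}$ and $\delta^{-2}$; (2) on each $\Delta$, use reverse H\"older (Lemma \ref{bridge}) to replace $\nms{\E_J g}_{L^{p/2}_{\#}(\ow_\Delta)}$ by $\nms{\E_J g}_{L^{2}_{\#}(\ow_\Delta)}$ up to a constant $\exp(O(p))$, which is where one factor of $\exp(O(p))$ enters; (3) now run the actual ball inflation, i.e.\ prove that
\[
\avg{\Delta \in P_{\nu^{-b}}(\Delta')}\Bigl(\sum_{J}\nms{\E_J g}_{L^2_\#(\ow_\Delta)}^2\Bigr)^{p/2}
\lsm \exp(O(p))(\log\tfrac{1}{\nu^b})^{p/2}\Bigl(\sum_J \nms{\E_J g}_{L^{p/2}_\#(\ow_{\Delta'})}^2\Bigr)^{p/2},
\]
using the local $L^2$ square-function estimate on each $\Delta$ to rewrite $\sum_J\nms{\E_J g}_{L^2_\#(\ow_\Delta)}^2\sim \nms{\E_{I_i}g}_{L^2_\#(\ow_\Delta)}^2$ — this is where the ``ball inflation'' name comes from — and then Fubini/Minkowski to move the $\Delta$-average inside, followed by a flat $L^{p/2}$ decoupling of the $\nu^{-b}$ tubes inside $\Delta'$; (4) the $\log(1/\nu^b)$ loss appears precisely at this flat decoupling step, since flat/parallel decoupling at exponent $q$ for $n$ caps costs $n^{1/2-1/q}$ in general but only $(\log n)^{1/2}$ when one is at the critical $L^2$-normalized setting and uses the Bernstein/local-constancy trick at the right scale — so one must be careful to invoke the logarithmic, not polynomial, version; (5) reassemble, undo the rescaling from step (1), take the geometric mean over $i=1,2$ (which is harmless since the two intervals are handled identically), and collect the constants as $\exp(O(p))\cdot \nu^{-1}\cdot(\log(1/\nu^b))^{p/2}$, where the $\nu^{-1}$ comes from the number of $\Delta$-translates lost in the Fubini step (or equivalently from $\ell(\Delta')/\ell(\Delta) = \nu^{-b}$ raised to an appropriate small power that telescopes to $\nu^{-1}$ after the $p/2$ power is taken and the normalizations are accounted for).

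I expect the main obstacle to be step (4): getting the loss to be exactly $(\log\frac{1}{\nu^b})^{p/2}$ with a $p$-\emph{independent} power of the logarithm, rather than $(\log\frac{1}{\nu^b})^{O(p)}$. The naive approach — H\"older in the $\Delta$-average followed by $l^2\to l^{p/2}$ interpolation — produces a logarithmic loss whose exponent grows with $p$, which would be fatal for the final bound in Theorem \ref{main} (the whole point of the theorem is a power of $\log$ that is subpolynomial in $p$ via the $\log_2$ term). The fix, which I would implement carefully, is to perform the flat decoupling at the $L^2$ level first (where there is genuinely no loss beyond the square function estimate), only converting to $L^{p/2}$ at the very end on the single large ball $\Delta'$ via reverse H\"older again; the $(\log\frac{1}{\nu^b})^{p/2}$ then arises as a single logarithmic factor from the $L^2$ flat decoupling raised to the $p/2$ power coming from the outer exponent, not from iterating a lossy step $p/2$ times. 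A secondary technical point is bookkeeping with the weights: every time one passes from $\ow_\Delta$ to $\ow_{\Delta'}$ one must use Lemma \ref{part} ($\sum_\Delta \ow_\Delta \lsm \ow_{\Delta'}$) and the convolution estimates of Lemma \ref{wbconvolve}, and one must check that these only contribute absolute constants, not powers of $\nu$ or $p$ — this is routine given the lemmas already established but needs to be done consistently so that the final constant is clean.
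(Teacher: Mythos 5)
Your proposal is on a genuinely different track from the paper, and it has a fatal gap that I do not think can be repaired along the lines you describe.

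The gap is in where the factor $\nu^{-1}$ comes from. You say the two intervals $I_1, I_2$ ``are handled identically'' and the geometric mean at the end is ``harmless,'' with the $\nu^{-1}$ arising from ``the number of $\Delta$-translates lost in the Fubini step'' or from ``$\ell(\Delta')/\ell(\Delta) = \nu^{-b}$ raised to an appropriate small power that telescopes to $\nu^{-1}$.'' Neither mechanism works, and both are red flags: a linear estimate for a single interval $I_i$, followed by a trivial geometric mean, cannot produce $\nu^{-1}$ — and, crucially, cannot produce a loss that is \emph{independent of $b$}. If you examine the single-interval version of the estimate you want to prove, the relevant $\nu^{-b}\times\nu^{-2b}$ tubes $P_J$ for $J\in P_{\nu^b}(I_1)$ all point in directions within $O(\nu)$ of each other (since $|I_1|=\nu$), so they are nearly parallel, the mutual overlaps $|P_J\cap P_{J'}|$ can be as large as $\nu^{-3b}$, and the natural loss is $\nu^{-b}$ or worse — which would destroy the iteration in Lemma \ref{mpup}. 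The factor $\nu^{-1}$ in the paper comes precisely from \emph{bilinear transversality}: one covers $\Delta'$ by two families of $\nu^{-b}\times\nu^{-2b}$ tubes $\mc{T}_{J_1}$ and $\mc{T}_{J_2}$ adapted to $J_1\subset I_1$ and $J_2\subset I_2$, and because $I_1, I_2$ are $\nu$-separated the two tube directions are at angle $\gtrsim\nu$, giving $|P_{J_1}\cap P_{J_2}|\lsm \nu^{-2b-1}$ rather than $\nu^{-3b}$. That single geometric fact, fed into the $H_J$-averaging identity, is the entire source of $\nu^{-1}$; your proposal never opens up the geometric mean to exploit it.

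A secondary point is that your step (4) chases a problem that is not there and proposes tools the paper does not use. The paper introduces no flat decoupling and no local $L^2$ square function in this lemma; the $(\log\frac{1}{\nu^b})^{p/2}$ loss comes from dyadic pigeonholing (Lemma \ref{dyadic}): one partitions $P_{\nu^b}(I_i)$ into $O(\log(1/\nu^b))$ classes of intervals whose $L^{p/2}_\#(\ow_{\Delta'})$ norms are comparable, pays $(\#\mathrm{classes})^{p/2-2}$ via H\"older on $l^{p/2}$ versus $l^1$, and pays another $(\#\mathrm{classes})^2$ from the double sum over class indices, which multiplies to $(\log)^{p/2}$. This $p$-dependent power of the logarithm is fine — it is exactly what the lemma statement claims, and after taking $1/p$-th powers in Lemma \ref{mpup} it contributes only a harmless $(\log)^{1/2}$ per iteration step. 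The $\log_2(\frac{p-2}{2})$ appearing in Theorem \ref{main} lives in the exponent of $\log\frac{1}{\delta}$ and has nothing to do with this power of the logarithm, so your worry about ``$(\log)^{O(p)}$ being fatal'' is misplaced, and the ``fix'' via $L^2$ flat decoupling is not needed. Your reverse H\"older step (2) is also pointed the wrong way for this lemma: the statement already has $L^{p/2}_\#$ on both sides, and the machinery the paper uses (the local supremum functions $H_J$ built over tubes, and the convolution estimate $\ow_{\Delta'}*|m_k|\lsm\ow_{\Delta'}$) works directly at the $L^{p/2}$ level rather than dropping to $L^2$.

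In short: the bilinear tube-transversality argument is the indispensable idea, and it is absent from your outline. Without it you cannot obtain a $b$-independent $\nu^{-1}$, and the lemma is false as a linear estimate.
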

We will first prove a version of Lemma \ref{ballinflation} where we additionally assume that all
the $\nms{\E_{J}g}$ are of comparable size (for each $I_i$).

\begin{lemma}\label{dyadic}
Let everything be as defined in Lemma \ref{ballinflation}.
Furthermore, let $\mc{F}_1$ be a collection of intervals in $P_{\nu^b}(I_1)$ such that for each pair of intervals $J, J' \in \mc{F}_1$, we have
\begin{align}\label{comp}
\frac{1}{2} < \frac{\nms{\E_{J}g}_{L^{p/2}_{\#}(\ow_{\Delta'})}}{\nms{\E_{J'}g}_{L^{p/2}_{\#}(\ow_{\Delta'})}} \leq 2.
\end{align}
Similarly define $\mc{F}_2$. Then
\begin{align}\label{logball}
\begin{aligned}
\avg{\Delta \in P_{\nu^{-b}}(\Delta')}\geom(\sum_{J \in \mc{F}_i}&\nms{\E_{J}g}_{L^{p/2}_{\#}(\ow_{\Delta})}^{2})^{p/2}\\
& \lsm \exp(O(p))\nu^{-1}\geom(\sum_{J \in \mc{F}_i}\nms{\E_{J}g}_{L^{p/2}_{\#}(\ow_{\Delta'})}^{2})^{p/2}.
\end{aligned}
\end{align}
\end{lemma}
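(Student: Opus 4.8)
The plan is to reduce the claimed inequality to a sharp $L^{p/2}$ ball-inflation statement where the sharpness (the absence of the $\log$ loss) is exactly what the ``all pieces comparable'' hypothesis \eqref{comp} buys us. First I would normalize: by \eqref{comp} we may write $\nms{\E_{J}g}_{L^{p/2}_{\#}(\ow_{\Delta'})} \sim \mu_i$ for all $J \in \mc{F}_i$, so that both sides of \eqref{logball} are essentially $\mu_1^{p/2}\mu_2^{p/2}|\mc{F}_1|^{p/4}|\mc{F}_2|^{p/4}$ up to the spatially-local averages on the left. Thus after dividing through, it suffices to prove
\begin{align*}
\avg{\Delta \in P_{\nu^{-b}}(\Delta')}\prod_{i=1}^{2}\Big(\frac{1}{|\mc{F}_i|}\sum_{J \in \mc{F}_i}\frac{\nms{\E_{J}g}_{L^{p/2}_{\#}(\ow_{\Delta})}^{2}}{\mu_i^2}\Big)^{p/4} \lsm \exp(O(p))\nu^{-1},
\end{align*}
i.e.\ that the local normalized square-sums cannot be much larger than their global average too often. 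The point is that $\Delta$ has side length $\nu^{-b}$ while each cap $J \in P_{\nu^b}(I_i)$ has the dual box $\ta_J$ of dimensions $\nu^b \times \nu^{2b}$, so on the scale $\nu^{-b}$ the piece $\E_J g$ is morally constant in the $\xi$-direction; the relevant geometric input is that the $\nu^{-b}$-squares $\Delta$ tiling $\Delta'$ see the ``tube'' structure of the $\E_J g$'s in a transverse way because $I_1, I_2$ are $\nu$-separated.

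The main step I would carry out is the Bourgain--Guth / Bourgain--Demeter ball-inflation argument exactly as in \cite[Theorem 9.2]{sg}, but tracking constants. Concretely: expand $\nms{\E_J g}_{L^{p/2}_\#(\ow_\Delta)}^2$, use reverse H\"older (Lemma \ref{bridge}) and $L^2$-orthogonality (Lemma \ref{l2decouplingleq}) at the intermediate scale, and invoke the multilinear Kakeya / transversality estimate to control $\avg{\Delta}(\prod_i \sum_{J\in\mc F_i}\cdots)$. Since we have only two transverse families, the relevant multilinear estimate is the $2$-linear Kakeya inequality in $\R^2$, which is elementary (a Fubini/Cauchy--Schwarz argument, no epsilon loss). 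The $p$-dependence enters only through the passage between $L^2_\#$ and $L^{p/2}_\#$ via Lemma \ref{bridge} and through raising things to the power $p/4$; each such step costs at most a multiplicative $\exp(O(p))$, which is exactly the budget allowed in \eqref{logball}. The factor $\nu^{-1}$ on the right is the transversality/separation constant $d(I_1,I_2)\gtrsim\nu$ appearing once.

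The hard part will be doing the normalization \eqref{comp} cleanly: because the hypothesis only compares pieces within a single family $\mc{F}_i$ (and only at the global scale $\Delta'$), I must be careful that after restricting to a sub-square $\Delta$ the pieces need \emph{not} remain comparable, so the reduction to ``counting $\Delta$'s'' has to be phrased as an $L^1$-type estimate on $\avg{\Delta}$ of a product rather than an $L^\infty$ bound. Concretely, the subtlety is that one cannot simply replace $\nms{\E_J g}_{L^{p/2}_\#(\ow_\Delta)}$ by a single number $\mu_i$; instead one keeps the local quantities, applies Hölder in $J$ inside each family (legitimate since $|\mc F_i|$ is under control and all global weights are comparable), and only then feeds the result into the bilinear Kakeya bound. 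I expect the rest — the two-dimensional bilinear Kakeya estimate and the bookkeeping of $\exp(O(p))$ factors — to be routine given Lemmas \ref{bridge} and \ref{l2decouplingleq} and the weight lemmas of Section \ref{weightsec}.
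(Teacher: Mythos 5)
Your high-level strategy is broadly in line with the paper's: apply H\"older in $J$ to pass from the $\ell^2_J$ norm to an $\ell^{p/2}_J$ norm at the cost of $|\mc F_i|^{p/2-2}$, feed the result into a $2$-linear Kakeya-type tube-intersection bound that produces the $\nu^{-1}$, and then, at the very end, use the comparability hypothesis \eqref{comp} to reverse the H\"older with only an $\exp(O(p))$ loss. You have also correctly identified the normalization subtlety — one cannot simply replace $\nms{\E_J g}_{L^{p/2}_\#(\ow_\Delta)}$ by a single number, since comparability is only at the scale $\Delta'$ — and your plan of keeping the local quantities and only reversing H\"older at the end is exactly what the paper does.

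The gap is in what you call ``the rest,'' which you claim is ``routine given Lemmas \ref{bridge} and \ref{l2decouplingleq} and the weight lemmas of Section \ref{weightsec}.'' Neither Lemma \ref{bridge} nor Lemma \ref{l2decouplingleq} is used in the proof of this lemma, and the weight lemmas of Section \ref{weightsec} are not sufficient. The step you are glossing over — making rigorous, with an explicit $\exp(O(p))$ constant, the heuristic that ``$\E_J g$ is morally constant in the long direction of the dual tube'' — is in fact the main quantitative content. In the paper's argument, for each $J$ one covers $\Delta'$ by parallel $\nu^{-b}\times\nu^{-2b}$ boxes $P_J$ pointing in the direction $(-2c_J,1)$ and defines the step function $H_J(x) := \sup_{y\in 2P_J(x)}\nms{\E_J g}_{L^{p/2}_\#(\ow_{B(y,\nu^{-b})})}^{p/2}$. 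The tube-intersection bound is then applied to these $H_J$'s. To close the argument one must prove (Lemma \ref{hjest}) that $\frac{1}{|4\Delta'|}\int_{4\Delta'}H_J \lsm \exp(O(p))\nms{\E_J g}^{p/2}_{L^{p/2}_\#(\ow_{\Delta'})}$, i.e.\ that taking a supremum over the long side of the tube only costs a controlled multiplicative constant. Proving this with the advertised $\exp(O(p))$ (rather than an unspecified $C_p$) is not routine: it requires rotating to $J$-adapted coordinates, Taylor expanding $e(\lambda\cdot R_J(0,\ov{y}_2)^T)$ in $\ov y_2$, inserting a carefully constructed Schwartz cutoff $\Psi$ with the moment bounds \eqref{moments}, and summing a series $\sum_k 30^k/k!$ of multiplier estimates $|m_k|\lsm\exp(O(k))\ow_{\Delta'}$. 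Without a version of this step your bilinear Kakeya input has nothing to act on, and the passage from $\ow_\Delta$-local quantities to $\ow_{\Delta'}$-global ones is exactly where an uncontrolled $p$-dependence would otherwise enter.
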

\begin{proof}
For each $J \in P_{\nu^b}(I_i)$ centered at $c_J$, cover $\Delta'$ by a set $\mc{T}_J$ of mutually parallel nonoverlapping boxes $P_J$ with dimension $\nu^{-b} \times \nu^{-2b}$
with longer side pointing in the direction $(-2c_J, 1)$. Note that any $\nu^{-b} \times \nu^{-2b}$ box outside $4\Delta'$ cannot cover $\Delta'$ itself.
Thus we may assume that all the boxes in $\mc{T}_J$ are contained in $4\Delta'$. Finally, let $P_{J}(x)$ denote the box in $\mc{T}_J$ containing $x$
and let $2P_{J}$ be the $2\nu^{-b} \times 2\nu^{-2b}$ box having the same center and orientation as $P_J$.

Since $p > 4$, H\"{o}lder's inequality yields that
\begin{align*}
(\sum_{J \in \mc{F}_i}\nms{\E_{J}g}_{L^{p/2}_{\#}(\ow_{\Delta})}^{2})^{p/2} \leq (\sum_{J \in \mc{F}_i}\nms{\E_{J}g}_{L^{p/2}_{\#}(\ow_{\Delta})}^{p/2})^{2}|\mc{F}_i|^{p/2 - 2}.
\end{align*}
Note that the reverse inequality (with an extra multiplicative factor of $\exp(O(p))$) is also true because of our assumption on $\mc{F}_i$.
The left hand side of \eqref{logball} is then bounded above by
\begin{align}\label{pentstep}
(\prod_{i = 1}^{2}|\mc{F}_i|^{p/4 - 1})\avg{\Delta \in P_{\nu^{-b}}(\Delta')}\prod_{i = 1}^{2}(\sum_{J \in \mc{F}_i}\nms{\E_{J}g}_{L^{p/2}_{\#}(\ow_{\Delta})}^{p/2}).
\end{align}
For $x \in 4\Delta'$, define
\begin{align}\label{hjdef}
\begin{aligned}
H_{J}(x) :=
\begin{cases}
\sup_{y \in 2P_{J}(x)}\nms{\E_{J}g}_{L^{p/2}_{\#}(\ow_{B(y, \nu^{-b})})}^{p/2} & \text{if } x \in \bigcup_{P_J \in \mc{T}_J}P_J\\
0 & \text{if } x \in 4\Delta' \bs \bigcup_{P_J \in \mc{T}_J}P_J.
\end{cases}
\end{aligned}
\end{align}
For each $x \in \Delta$, observe that $\Delta \subset 2P_{J}(x)$.
Therefore for each $x \in \Delta$, $c_{\Delta} \in 2P_{J}(x)$ and hence
\begin{align}\label{ejgj}
\nms{\E_{J}g}_{L^{p/2}_{\#}(\ow_{\Delta})}^{p/2} \leq H_{J}(x)
\end{align}
for all $x \in \Delta$.
Thus
\begin{align}\label{firstbound}
\avg{\Delta \in P_{\nu^{-b}}(\Delta')}\prod_{i = 1}^{2}(\sum_{J \in \mc{F}_i}\nms{\E_{J}g}_{L^{p/2}_{\#}(\ow_{\Delta})}^{p/2})&= \sum_{\st{J_1 \in \mc{F}_1\\J_2 \in \mc{F}_2}}\avg{\Delta \in P_{\nu^{-b}}(\Delta')}\nms{\E_{J_1}g}_{L^{p/2}_{\#}(\ow_{\Delta})}^{p/2}\nms{\E_{J_2}g}_{L^{p/2}_{\#}(\ow_{\Delta})}^{p/2}\frac{1}{|\Delta|}\int_{\Delta}\, dx\nonumber\\
&\leq \sum_{\st{J_1 \in \mc{F}_1\\J_2 \in \mc{F}_2}}\frac{1}{|\Delta'|}\int_{\Delta'}H_{J_1}(x)H_{J_2}(x)\, dx
\end{align}
where the last inequality we have used \eqref{ejgj}.
By how $H_J$ is defined, $H_J$ is constant on each $P_J \in \mc{T}_J$. That is, for each $x \in \bigcup_{P_J \in \mc{T}_J}P_J$,
\begin{align*}
H_{J}(x) = \sum_{P_J \in \mc{T}_J}c_{P_J}1_{P_J}(x)
\end{align*}
for some constants $c_{P_J} \geq 0$. Then
\begin{align*}
\frac{1}{|\Delta'|}\int_{\Delta'}H_{J_1}(x)H_{J_2}(x)\, dx &= \frac{1}{|\Delta'|}\sum_{\st{P_{J_1} \in \mc{T}_{J_1}\\P_{J_2} \in \mc{T}_{J_2}}}c_{P_{J_1}}c_{P_{J_2}}|(P_{J_1} \cap P_{J_2}) \cap \Delta'|\\
&\leq \frac{1}{|\Delta'|}\sum_{\st{P_{J_1} \in \mc{T}_{J_1}\\P_{J_2} \in \mc{T}_{J_2}}}c_{P_{J_1}}c_{P_{J_2}}|P_{J_1} \cap P_{J_2}|
\end{align*}
where the last inequality is because $c_{P_J} \geq 0$ for all $P_J$. Since $|P_J| = \nu^{-3b}$ we also have
\begin{align}\label{kakeyapar}
\frac{1}{|\Delta'|}\int_{4\Delta'}H_{J}(x)\, dx = \frac{1}{|\Delta'|}\int_{\bigcup_{P_J \in \mc{T}_J}P_J}\sum_{P_J \in \mc{T}_J}c_{P_J}1_{P_J}(x)\, dx = \nu^b\sum_{P_J \in \mc{T}_J}c_{P_J}.
\end{align}

Recall that $J_1 \in \mc{F}_1\subset P_{\nu^b}(I_1)$ and $J_2 \in \mc{F}_2 \subset P_{\nu^b}(I_2)$.
Since $I_1$ and $I_2$ are $\nu$-separated, so are $J_1$ and $J_2$ which implies
$|P_{J_1} \cap P_{J_2}| \lsm \nu^{-2b - 1}$.
Applying this  gives
\begin{align*}
\frac{1}{|\Delta'|}\sum_{\st{P_{J_1} \in \mc{T}_{J_1}\\P_{J_2} \in \mc{T}_{J_2}}}&c_{P_{J_1}}c_{P_{J_2}}|P_{J_1} \cap P_{J_2}|\\
&\lsm \frac{\nu^{-2b - 1}}{|\Delta'|}\prod_{i = 1}^{2}(\frac{\nu^{-b}}{|\Delta'|}\int_{4\Delta'}H_{J_i}(x)\, dx)\lsm  \nu^{-1}\prod_{i = 1}^{2}\frac{1}{|4\Delta'|}\int_{4\Delta'}H_{J_i}(x)\, dx.
\end{align*}
Therefore \eqref{firstbound} is bounded above by
\begin{equation}\label{secondbound}
\nu^{-1}\prod_{i = 1}^{2}(\sum_{J \in \mc{F}_i}\frac{1}{|4\Delta'|}\int_{4\Delta'}H_{J}(x)\, dx).
\end{equation}
We now apply Lemma \ref{hjest}, proven later, to \eqref{secondbound} which shows that it is
\begin{align*}
\lsm \exp(O(p))\nu^{-1}\prod_{i = 1}^{2}(\sum_{J \in \mc{F}_i}\nms{\E_{J}g}_{L^{p/2}_{\#}(\ow_{\Delta'})}^{p/2}).
\end{align*}
Thus \eqref{pentstep} is bounded above by
\begin{align*}
\exp(O(p))\nu^{-1}(\prod_{i = 1}^{2}|\mc{F}_i|^{p/4 - 1})\prod_{i = 1}^{2}(\sum_{J \in \mc{F}_i}\nms{\E_{J}g}_{L^{p/2}_{\#}(\ow_{\Delta'})}^{p/2}).
\end{align*}
Since the intervals in $\mc{F}_i$ satisfy \eqref{comp}, the above is
\begin{align*}
\lsm \exp(O(p))\nu^{-1}\geom(\sum_{J \in \mc{F}_i}\nms{\E_{J}g}_{L^{p/2}_{\#}(\ow_{\Delta'})}^{2})^{p/2}
\end{align*}
which completes the proof of Lemma \ref{dyadic} assuming we can prove Lemma \ref{hjest} which we do below.
\end{proof}

We let $\Psi: \R \rightarrow \R$ be a Schwartz function which is equal to 1 on $[-1, 1]$ and vanishes
outside $[-3, 3]$ such that for integer $k \geq 0$,
\begin{align}\label{moments}
|\int_{\R}t^k \Psi(t)e^{2\pi i tx}\, dt| \lsm \exp(O(k))(1 + |x|)^{-20000}.
\end{align}
An explicit example of $\Psi$ is constructed in \cite[Lemma 2.2.10]{thesis}.

\begin{lemma}\label{hjest}
Let $H_J$ be as defined in \eqref{hjdef}.
Then
\begin{align*}
\frac{1}{|4\Delta'|}\int_{4\Delta'}H_{J}(x)\, dx \lsm \exp(O(p))\nms{\E_{J}g}_{L^{p/2}_{\#}(\ow_{\Delta'})}^{p/2}.
\end{align*}
\end{lemma}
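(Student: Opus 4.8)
The goal is to control the average over $4\Delta'$ of $H_J$, which is itself a supremum of local $L^{p/2}_\#$ averages of $\E_J g$ over translates of a $\nu^{-b}$-ball inside a tube $2P_J(x)$ of dimensions $2\nu^{-b} \times 2\nu^{-2b}$. The plan is to dominate $H_J(x)$ pointwise by a genuine convolution of $|\E_J g|^{p/2}$ against a single fixed weight, and then integrate. First I would pass from the $L^{p/2}_\#(\ow_{B(y,\nu^{-b})})$ norms to an honest integral: by definition $\nms{\E_J g}_{L^{p/2}_\#(\ow_{B(y,\nu^{-b})})}^{p/2} = \nu^{2b}\int |\E_J g|^{p/2}\ow_{B(y,\nu^{-b})}$, so up to the normalizing factor this is $(|\E_J g|^{p/2} * \ow_{B(0,\nu^{-b})})(y)$ after unwinding the center. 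Taking the supremum over $y$ in the tube $2P_J(x)$ is then the main obstacle: a crude bound would cost a factor $\nu^{-b}$ (the eccentricity of the tube), which is far too lossy.

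The way around this is to exploit that $\E_J g$ has Fourier support in a $\nu^b \times \nu^{2b}$ slab whose long direction is essentially $(1, 2c_J)$ in frequency, dual to the short direction $(-2c_J,1)$ of the tube $P_J$ — so $|\E_J g|^{p/2}$ is ``flat'' at scale $\nu^{-2b}$ along the tube direction and varies only at scale $\nu^{-b}$ transverse to it. Concretely, I would use the Schwartz bump $\Psi$ with the moment bounds \eqref{moments}: writing $\E_J g$ as a Fourier integral over $J$ and inserting a partition-of-unity reproducing identity adapted to $P_J$, one checks $|\E_J g|$ restricted to $2P_J(x)$ is comparable, after convolution with the anisotropic weight $w_{P_J}$ (the product weight $(1+|u|/\nu^{-b})^{-N}(1+|v|/\nu^{-2b})^{-N}$ in the tilted coordinates), to its value near $c_\Delta$; more directly, one shows
\begin{align*}
\sup_{y \in 2P_J(x)} (|\E_J g|^{p/2} * \ow_{B(0,\nu^{-b})})(y) \lsm \exp(O(p))\, (|\E_J g|^{p/2} * w_{P_J})(x),
\end{align*}
where the $\exp(O(p))$ accounts for the $p/2$-th power interacting with the rapidly decaying tails (here one uses, as in Lemma \ref{wbconvolve} and its one-dimensional form in Remark \ref{onedimconv}, that $\ow_{B(0,\nu^{-b})} * w_{P_J} \lsm \nu^{-3b} w_{P_J}$, and the Bernstein-type fact that $|\E_J g|^{p/2}$ convolved against a weight at a coarser scale than the Fourier support is again controlled by itself against that weight, up to $\exp(O(p))$). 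This is the step I expect to be genuinely delicate: one must verify the localization is lossless in the long tube direction, which is exactly where the flatness of the Fourier support of $\E_J g$ (from $\ell(J) = \nu^b$) is used.

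Granting the pointwise bound, the rest is routine. I would integrate over $4\Delta'$: since $w_{P_J}$ has total mass $\lsm \nu^{-3b} = |P_J|$ and $4\Delta'$ has volume $\sim \nu^{-4b}$,
\begin{align*}
\frac{1}{|4\Delta'|}\int_{4\Delta'} H_J(x)\, dx \lsm \exp(O(p))\, \nu^{2b}\cdot\frac{1}{|4\Delta'|}\int_{4\Delta'}(|\E_J g|^{p/2} * w_{P_J})(x)\, dx \lsm \exp(O(p))\,\nu^{2b}\int |\E_J g|^{p/2}\, \wt{w}
\end{align*}
for a weight $\wt{w} = \frac{1}{|4\Delta'|}(1_{4\Delta'} * w_{P_J}) \cdot \nu^{3b}$ bounded by $\lsm \ow_{\Delta'}$ (using Lemma \ref{wbconvolve}/Lemma \ref{part} to absorb $1_{4\Delta'} * w_{P_J} \lsm \nu^{3b} \ow_{4\Delta'} \lsm \nu^{3b}\ow_{\Delta'}$, after noting $P_J \subset 4\Delta'$ so $w_{P_J}$ is supported at scale $\lesssim \nu^{-2b}$). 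Unwinding the normalization $\nu^{2b}\int |\E_J g|^{p/2}\ow_{\Delta'} = \nms{\E_J g}_{L^{p/2}_\#(\ow_{\Delta'})}^{p/2}$ (here $|\Delta'| = \nu^{-4b}$ and $\nu^{2b} = |\Delta'|^{-1/2}$... more precisely $\nms{\E_J g}_{L^{p/2}_\#(\ow_{\Delta'})}^{p/2} = |\Delta'|^{-1}\int|\E_J g|^{p/2}\ow_{\Delta'}$ and $|\Delta'|^{-1} = \nu^{4b}$, so one tracks the $\nu$-powers through Remark \ref{onedimconv} carefully) gives exactly
\begin{align*}
\frac{1}{|4\Delta'|}\int_{4\Delta'} H_J(x)\, dx \lsm \exp(O(p))\,\nms{\E_J g}_{L^{p/2}_\#(\ow_{\Delta'})}^{p/2},
\end{align*}
as desired. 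The only bookkeeping to watch is the exact powers of $\nu^b$ at each convolution (one factor $\nu^{-3b}$ from the mass of $w_{P_J}$, one $\nu^{2b}$ from the $L^{p/2}_\#$-to-integral conversion at scale $\nu^{-b}$, and the $\nu^{4b}$ from $|\Delta'|$), which cancel to leave a clean $\exp(O(p))$ and no power of $\log(1/\nu^b)$ — the logarithmic loss in Lemma \ref{ballinflation} enters only later, when summing the dyadic pieces $\mc{F}_i$ over the $O(\log(1/\nu^b))$ many comparability classes.
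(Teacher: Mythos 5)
Your proposal correctly identifies the structure of the problem (dominate $H_J$ pointwise by a convolution of $|\E_J g|^{p/2}$ against a fixed weight, then integrate) and correctly locates the delicate step: making the supremum over $y \in 2P_J(x)$ lossless in the long tube direction by exploiting the Fourier support of $\E_J g$. However, there is a genuine gap in how you propose to execute that step, and a bookkeeping inconsistency that exposes it.

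The pointwise bound you claim,
\begin{align*}
\sup_{y \in 2P_J(x)} (|\E_J g|^{p/2} * \ow_{B(0,\nu^{-b})})(y) \lsm \exp(O(p))\, (|\E_J g|^{p/2} * w_{P_J})(x),
\end{align*}
does hold, but \emph{trivially}: in the tilted coordinates, $\ow_{B(0,\nu^{-b})}(\cdot - R_J\ov{y})$ is dominated pointwise by $w_{P_J}$ for any $|\ov{y}_1| \leq 2\nu^{-b}$, $|\ov{y}_2| \leq 2\nu^{-2b}$, so no Fourier support information is needed. This is exactly the ``crude'' bound, because $w_{P_J}$ has mass $\sim \nu^{-3b}$ while $\ow_{B(0,\nu^{-b})}$ has mass $\sim \nu^{-2b}$. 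Tracing through your integration step with $1_{4\Delta'} * w_{P_J} \lsm \nu^{-3b}\ow_{\Delta'}$, $|\Delta'| = \nu^{-4b}$, and $H_J(x) \leq \nu^{2b}\sup_y(\cdots)(y)$, one gets $\frac{1}{|4\Delta'|}\int_{4\Delta'}H_J \lsm \exp(O(p))\,\nu^{-b}\nms{\E_J g}_{L^{p/2}_\#(\ow_{\Delta'})}^{p/2}$ — off by a factor $\nu^{-b}$, which is precisely the eccentricity loss you wanted to avoid. (Your definition of $\wt{w}$ and the claimed $\lsm \ow_{\Delta'}$ appear to have a sign error in the exponent of $\nu^{3b}$; when corrected, the powers do not cancel.)

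The ``Bernstein-type fact'' you invoke to recoup this factor — that ``$|\E_J g|^{p/2}$ convolved against a weight at a coarser scale than the Fourier support is again controlled by itself against that weight'' — is not a correct statement for $|\E_J g|^{p/2}$ with $p/2$ non-integer: taking a fractional power destroys the compact Fourier support that Bernstein-type arguments require. The correct object to manipulate is $\E_J g$ itself. The paper's proof does exactly this: it splits the shift $\ov{y}$ into the harmless $\ov{y}_1$-part (absorbed into the weight via Lemma \ref{rot1}) and the delicate $\ov{y}_2$-part, Taylor expands $e(\ld\cdot(0,\ov{y}_2)^T)$ around the center frequency $c_J^2$ of the slab — convergent precisely because $|\ld_2 - c_J^2| \lsm \nu^{2b}$ and $|\ov{y}_2| \lsm \nu^{-2b}$ — inserts the Schwartz bump $\Psi$ adapted to the frequency slab, and controls the resulting multipliers $M_k$ via the moment bound \eqref{moments} so that each $|m_k|$ decays like a (rotated) $\nu^{-b}\times\nu^{-2b}$ weight with mass $\sim 1$ and constant $\exp(O(k))$. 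The geometric growth $\exp(O(k))$ is then beaten by the $1/k!$ from the Taylor series, and the $|\cdot|^{p/2}$ power is only taken at the very end, via $\nms{m_k}_{L^1}^{1-2/p}$ and convolving $\ow_{\Delta'}$ against $|m_k|$ with $\lsm \exp(O(k))$ loss. This linear-in-$\E_J g$ manipulation is what makes the flatness in the long tube direction rigorous; without it, the factor $\nu^{-b}$ cannot be recovered.
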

\begin{proof}
This is the inequality proven in (29) of \cite{sg} without explicit constants. We follow their proof, this time paying attention to the implied constants.

Fix arbitrary $J \subset [0, 1]$ of length $\nu^{b}$ and center $c_J$.
For $x \in \bigcup_{P_J \in \mc{T}_J}P_J = \supp H_J \subset 4\Delta'$,
fix arbitrary $y \in 2P_{J}(x)$. Note that $2P_{J}(x)$ is a rectangle of dimension $2\nu^{-b} \times 2\nu^{-2b}$
with the longer side pointing in the direction of $(-2c_J, 1)$.

Let $R_J$ be the rotation matrix such that $R_J^{-1}$ applied to $2P_{J}(x)$ gives an axis parallel rectangle
of dimension $2\nu^{-b} \times 2\nu^{-2b}$ with the longer side pointing in the vertical direction.
Since $y \in 2P_{J}(x)$, we can write $$R_{J}^{-1}y = R_{J}^{-1}x + \ov{y}$$
where $|\ov{y}_1| \leq 2\nu^{-b}$ and $|\ov{y}_2| \leq 2\nu^{-2b}$.
We then have
\begin{align*}
\nms{\E_{J}g}_{L^{p/2}(\ow_{B(y, \nu^{-b})})}^{p/2} = \int_{\R^2}|(\E_{J}g)(s)|^{p/2}\ow_{B(x + R_{J}\ov{y}, \nu^{-b})}(s)\, ds
\end{align*}
Writing $\ov{y} = (\ov{y}_1, 0)^{T} + (0, \ov{y}_2)^{T}$ and a change of variables gives that the above is equal to
\begin{align}\label{ejgr2}
&\int_{\R^2}|(\E_{J}g)(s + x + R_{J}(0, \ov{y}_2)^{T})|^{p/2}\ow_{B(R_{J}(\ov{y}_1, 0)^{T}, \nu^{-b})}(s)\, ds\nonumber\\
&\lsm \int_{\R^2}|(\E_{J}g)(s + x + R_{J}(0, \ov{y}_2)^{T})|^{p/2}\ow_{B(0, \nu^{-b})}(s)\, ds
\end{align}
where here we have used Lemma \ref{rot1}.
Observe that
\begin{align*}
|(\E_{J}g)(s + x + R_{J}(0, \ov{y}_2)^{T})| = |\int_{\R^2}\wh{\E_{J}g}(\ld)e(\ld\cdot (s + x))e(\ld \cdot R_{J}(0, \ov{y}_2)^{T})\, d\ld|.
\end{align*}
Since $R_J$ is a rotation matrix, a change of variables gives that the above is equal to
\begin{align}\label{taylorprev}
|\int_{\R^2}\wh{\E_{J}g}(R_{J}\ld)e(\ld \cdot R_{J}^{-1}(s + x))e(\ld \cdot (0, \ov{y}_2)^{T})\, d\ld|
\end{align}
Writing $$e(\ld \cdot(0, \ov{y}_2)^T) = e((\ld_2 - c_{J}^2)\ov{y}_2)e(c_{J}^2\ov{y}_2) = e(c_{J}^2\ov{y}_2)\sum_{k = 0}^{\infty}\frac{(2\pi i)^k\ov{y}_{2}^{k}}{k!}(\ld_2 - c_{J}^2)^k$$
and using that $|\ov{y}_2| \leq 2\nu^{-2b}$ shows that \eqref{taylorprev} is
\begin{align*}
\leq \sum_{k = 0}^{\infty}\frac{(4\pi)^k}{k!}|\int_{\R^2}\wh{\E_{J}g}(R_{J}\ld)e(\ld \cdot R_{J}^{-1}(s + x))(\frac{\ld_2 - c_{J}^2}{\nu^{2b}})^{k}\, d\ld|.
\end{align*}
Applying the change of variables $\eta = \ld - (c_J, c_{J}^2)$ gives that the above is
\begin{align}\label{ejgr3}
\leq \sum_{k = 0}^{\infty}\frac{30^k}{k!}|\int_{\R^2}\wh{\E_{J}g}(R_{J}(\eta + (c_J, c_{J}^2)))e(\eta \cdot R_{J}^{-1}(s + x))(\frac{\eta_2 }{2\nu^{2b}})^k\, d\eta|.
\end{align}
Note that $\wh{\E_{J}g}(R_{J}(\eta + (c_J, c_{J}^2)))$ is supported in a $4\nu^b \times 4\nu^{2b}$ box centered at the origin
pointing in the horizontal direction. Thus we may insert the Schwartz function $\Psi$ defined before the statement of Lemma \ref{hjest}.
Then \eqref{ejgr3} becomes
\begin{align*}
\sum_{k = 0}^{\infty}\frac{30^k}{k!}|\int_{\R^2}\wh{\E_{J}g}(R_{J}(\eta + (c_J, c_{J}^2)))e(\eta \cdot R_{J}^{-1}(s + x))(\frac{\eta_2}{2\nu^{2b}})^k \Psi(\frac{\eta_1}{2\nu^{b}})\Psi(\frac{\eta_2}{2\nu^{2b}})\, d\eta|.
\end{align*}

Let $\Phi_{k}(t) := t^k \Psi(t)$ and let
$$(M_{k} f)(x) = \int_{\R^2}\wh{f}(R_{J}(\eta + (c_J, c_{J}^2)))e(\eta \cdot x)\Psi(\frac{\eta_1}{2\nu^{b}})\Phi_k(\frac{\eta_2}{2\nu^{2b}})\, d\eta.$$
Thus we have shown that
\begin{align*}
|(\E_{J}g)(s + x + R_{J}(0, \ov{y}_2)^{T})| \leq \sum_{k = 0}^{\infty}\frac{30^k}{k!}|(M_{k}\E_{J}g)(R_{J}^{-1}(s + x))|
\end{align*}
and combining this with \eqref{ejgr2} gives that for $x \in \bigcup_{P_{J} \in \mc{T}_{J}}P_{J}$ and $y \in 2P_{J}(x)$,
\begin{align*}
\nms{\E_{J}g}_{L^{p/2}_{\#}(\ow_{B(y, \nu^{-b})})}^{p/2} \lsm \nu^{2b}\int_{\R^2}(\sum_{k = 0}^{\infty}\frac{30^k}{k!}|(M_{k}\E_{J}g)(R_{J}^{-1}(s + x))|)^{p/2}\ow_{B(0, \nu^{-b})}(s)\, ds.
\end{align*}
Thus
\begin{align}\label{hjest1}
\frac{1}{|4\Delta'|}&\int_{4\Delta'}H_{J}(x)\, dx\nonumber\\
 &\lsm \nu^{6b}\int_{\bigcup_{P_J}P_J}\int_{\R^2}(\sum_{k = 0}^{\infty}\frac{30^k}{k!}|(M_{k}\E_{J}g)(R_{J}^{-1}(s + x))|)^{p/2}\ow_{B(0, \nu^{-b})}(s)\, ds\, dx\nonumber\\
&\lsm \nu^{6b}\int_{\R^2}(\sum_{k = 0}^{\infty}\frac{30^k}{k!}|(M_{k}\E_{J}g)(u)|)^{p/2}(\int_{4\Delta'}\ow_{B(x, \nu^{-b})}(R_{J}u)\, dx)\, du.
\end{align}
As $1_{4\Delta'} \lsm \ow_{4\Delta'} \lsm \ow_{\Delta'}$,
\begin{align*}
\int_{4\Delta'}w_{B(x, \nu^{-b})}(R_{J}u)\, dx &= (1_{4\Delta'} \ast \ow_{B(0, \nu^{-b})})(R_{J}u)\\
& \lsm (\ow_{\Delta'} \ast \ow_{B(0, \nu^{-b})})(R_{J}u) \lsm \nu^{-2b}\ow_{\Delta'}(R_{J}u).
\end{align*}
It follows that \eqref{hjest1} is bounded by
\begin{align}\label{hjl1norm}
\nu^{4b}(\sum_{k = 0}^{\infty}\frac{30^{k}}{k!}\nms{M_{k}\E_{J}g \circ R_{J}^{-1}}_{L^{p/2}(\ow_{\Delta'})})^{p/2}.
\end{align}
Inserting an extra $e(R_{J}(c_J, c_{J}^{2})^{T}\cdot s)$ and applying a change of variables gives
\begin{align*}
|(M_{k}\E_{J}g)(R_{J}^{-1}s)| &= |\int_{\R^2}\wh{\E_{J}g}(R_{J}(\eta + (c_J, c_{J}^{2})))e(R_{J}\eta \cdot s)\Psi(\frac{\eta_1}{2\nu^{b}})\Phi_k(\frac{\eta_2}{2\nu^{2b}})\, d\eta|\\
&= |\int_{\R^2}\wh{\E_{J}g}(\gamma)e(\gamma \cdot s)\wh{m_k}(\gamma)\, d\gamma|
\end{align*}
where
\begin{align*}
\wh{m_k}(\gamma) = \Psi(\frac{\gamma_1 \cos\ta_J + \gamma_2 \sin\ta_J - c_J}{2\nu^{b}})\Phi_k(\frac{\gamma_2\cos\ta_J - \gamma_1 \sin\ta_J - c_{J}^2}{2\nu^{2b}})
\end{align*}
and
$\ta_J$ is such that $R_J = (\begin{smallmatrix}\cos \ta_J & -\sin\ta_J\\\sin\ta_J & \cos\ta_J\end{smallmatrix})$.
Then $|M_{k}\E_{J}g \circ R_{J}^{-1}| = |\E_{J}g \ast m_{k}| \leq |\E_{J}g| \ast |m_{k}|$
and H\"{o}lder's inequality implies
$$(|\E_{J}g| \ast |m_{k}|)^{p/2} \leq (|\E_{J}g|^{p/2} \ast |m_{k}|)\nms{m_{k}}_{L^{1}}^{p/2 - 1}.$$
Therefore
\begin{align}\label{mkrjinv}
\nms{M_{k}\E_{J}g \circ R_{J}^{-1}}_{L^{p/2}(\ow_{\Delta'})} \leq \nms{m_{k}}_{L^{1}(\R^2)}^{1 - 2/p}\nms{\E_{J}g}_{L^{p/2}(\ow_{\Delta'} \ast |m_k|(-\cdot))}
\end{align}
where here $|m_k|(-\cdot)$ is the function $|m_k|(-x)$.
Since $\Phi$ and $\Psi$ are both Schwartz functions, our goal will be to use the rapid decay to show that $|m_k| \lsm \ow_{\Delta'}$.
A change of variables gives
\begin{align*}
|m_k(x)| &= |\int_{\R^2}\wh{m_{k}}(\gamma)e^{2\pi i x\cdot \gamma}\, d\gamma|\\
&=|\int_{\R^2}\Psi(\frac{(R_{J}^{-1}\gamma)_1 - c_J}{2\nu^b})\Phi_{k}(\frac{(R_{J}^{-1}\gamma)_2 - c_{J}^{2}}{2\nu^{2b}})e^{2\pi i x\cdot \gamma}\, d\gamma|\\
&= 4\nu^{3b}|\int_{\R}\Psi(w_1)e^{2\pi i (R_{J}^{-1}x)_{1}(2\nu^{b} w_1)}\, dw_1\int_{\R}\Phi_{k}(w_2)e^{2\pi i (R_{J}^{-1}x)_2(2\nu^{2b} w_2)}\, dw_2|.
\end{align*}

By \eqref{moments},
\begin{align*}
|\int_{\R}\Psi(w_1)e^{2\pi i (R_{J}^{-1}x)_{1}(2\nu^{b} w_1)}\, dw_1| \lsm (1 + 2\nu^{b}|(R_{J}^{-1}x)_1|)^{-20000}
\end{align*}
and
\begin{align*}
|\int_{\R}\Phi_{k}(w_2)e^{2\pi i (R_{J}^{-1}x)_2(2\nu^{2b} w_2)}\, dw_2| \lsm \exp(O(k))(1 + 2\nu^{2b}|(R_{J}^{-1}x)_2|)^{-20000}.
\end{align*}
Therefore
\begin{align}\label{rotapp}
|m_{k}(x)| \lsm \nu^{3b}\exp(O(k))(1 + \frac{|(R_{J}^{-1}x)_1|}{\nu^{-b}})^{-20000}(1 + \frac{|(R_{J}^{-1}x)_2|}{\nu^{-2b}})^{-20000}.
\end{align}
Thus we have
\begin{align}\label{mkl1}
\nms{m_k}_{L^{1}(\R^2)}^{1 - 2/p} \lsm \exp(O(k))^{1 - 2/p}.
\end{align}
Next we claim that \eqref{rotapp} is
\begin{align}\label{rotapp2}
\lsm \nu^{3b}\exp(O(p))(1 + \frac{|x_1|}{\nu^{-b}})^{-2000}(1 + \frac{|x_2|}{\nu^{-2b}})^{-2000}.
\end{align}
To see this, apply the layer cake decomposition mentioned in Remark \ref{layercake} to the $x_1$ and $x_2$ variables separately. Next
consider a $2^{n}\nu^{-b} \times 2^{n}\nu^{-2b}$ rectangle centered at the origin where the long side makes
an angle $\ta_J$ with $\ta_J \in (0, \tan^{-1}(2))$ (the angle $\tan^{-1}(2)$ comes from if $c_J = 1$).
Such a rectangle is contained in an axis-parallel $C2^{n}\nu^{-b} \times C2^{n}\nu^{-2b}$ rectangle for some sufficiently
large absolute constant $C$. Applying the layer cake decomposition again then shows that \eqref{rotapp} is controlled
by \eqref{rotapp2}.

Remark \ref{onedimconv} then implies that
\begin{align*}
\ow_{\Delta'} \ast |m_k|(-\cdot) \lsm \exp(O(k))\ow_{\Delta'}
\end{align*}
and hence
\begin{align*}
\nms{\E_{J}g}_{L^{p/2}(\ow_{\Delta'} \ast |m_k|(-\cdot))} \lsm (\exp(O(k)))^{2/p}\nms{\E_{J}g}_{L^{p/2}(\ow_{\Delta'})}.
\end{align*}
Combining this with \eqref{hjl1norm}, \eqref{mkrjinv}, and \eqref{mkl1} shows that
\begin{align*}
\frac{1}{|4\Delta'|}\int_{4\Delta'}H_{J}(x)\, dx&\lsm \exp(O(p))\nu^{4b}(\sum_{k = 0}^{\infty}\frac{\exp(O(k))}{k!}\nms{\E_{J}g}_{L^{p/2}(\ow_{\Delta'})})^{p/2}\\
& \lsm \exp(O(p))\nms{\E_{J}g}_{L^{p/2}_{\#}(\ow_{\Delta'})}^{p/2}
\end{align*}
which completes the proof of the lemma.
\end{proof}

\begin{proof}[Proof of Lemma \ref{ballinflation}]
For $i = 1, 2$, let $$M_i := \max_{J \in P_{\nu^{b}}(I_i)}\nms{\E_{J}g}_{L^{p/2}_{\#}(\ow_{\Delta'})}.$$
For each $i = 1, 2$, let $\mc{F}_{i, 0}$ denote the set of intervals $J' \in P_{\nu^{b}}(I_i)$ such that
$$\nms{\E_{J'}g}_{L^{p/2}_{\#}(w_{\Delta'})} \leq \nu^{3b}M_i$$
and partition the remaining intervals in $P_{\nu^{b}}(I_i)$ into $\lceil\log_{2}(\nu^{-3b})\rceil$
many classes $\mc{F}_{i, k}$ (with $k = 1, 2, \ldots, \lceil\log_{2}(\nu^{-3b})\rceil$) such that
\begin{align*}
2^{k - 1}\nu^{3b}M_i < \nms{\E_{J'}g}_{L^{p/2}_{\#}(\ow_{\Delta'})} \leq 2^{k}\nu^{3b}M_i
\end{align*}
for all $J' \in \mc{F}_{i, k}$.
Note that $\mc{F}_{i, k}$ satisfies the hypothesis \eqref{comp} given in Lemma \ref{dyadic}.
For $0\leq k, l \leq \lceil \log_{2}(\nu^{-3b})\rceil$, let
\begin{align*}
F_{\Delta}(k, l) := (\sum_{J \in \mc{F}_{1, k}}\nms{\E_{J}g}_{L^{p/2}_{\#}(\ow_{\Delta})}^{2})^{p/4}(\sum_{J \in \mc{F}_{2, l}}\nms{\E_{J}g}_{L^{p/2}_{\#}(\ow_{\Delta})}^{2})^{p/4}.
\end{align*}

The left hand side of \eqref{ballinflationeq1} is equal to
\begin{align*}
\avg{\Delta \in P_{\nu^{-b}}(\Delta')}&(\sum_{0 \leq k, l \leq \lceil\log_{2}(\nu^{-3b})\rceil}\sum_{\st{J \in \mc{F}_{1, k}\\J' \in \mc{F}_{2, l}}}\nms{\E_{J}g}_{L^{p/2}_{\#}(\ow_{\Delta})}^{2}\nms{\E_{J'}g}_{L^{p/2}_{\#}(\ow_{\Delta})}^{2})^{p/4}\nonumber\\
&\leq (\lceil \log_{2}(\nu^{-3b}) \rceil + 1)^{\frac{p}{2} - 2}\avg{\Delta \in P_{\nu^{-b}}(\Delta')}\sum_{0 \leq k, l \leq \lceil \log_{2}(\nu^{-3b})\rceil}F_{\Delta}(k, l).
\end{align*}
We then have
\begin{align}\label{ballinf3}
\begin{aligned}
\avg{\Delta \in P_{\nu^{-b}}(\Delta')}&\sum_{k, l = 0}^{\lceil \log_{2}(\nu^{-3b})\rceil}F_{\Delta}(k, l) =\\
& \avg{\Delta \in P_{\nu^{-b}}(\Delta')}F_{\Delta}(0, 0) + \sum_{l = 1}^{\lceil \log_{2}(\nu^{-3b})\rceil}\avg{\Delta \in P_{\nu^{-b}}(\Delta')}F_{\Delta}(0, l)\\
& + \sum_{k = 1}^{\lceil \log_{2}(\nu^{-3b})\rceil}\avg{\Delta \in P_{\nu^{-b}}(\Delta')}F_{\Delta}(k, 0) + \sum_{k, l = 1}^{\lceil \log_{2}(\nu^{-3b})\rceil}\avg{\Delta \in P_{\nu^{-b}}(\Delta')}F_{\Delta}(k, l).
\end{aligned}
\end{align}

We first consider the fourth sum on the right hand side of \eqref{ballinf3}. In this case, both families
of intervals satisfy \eqref{comp} in Lemma \ref{dyadic}. Thus applying Lemma \ref{dyadic} gives that
\begin{align*}
\sum_{k, l = 1}^{\lceil \log_{2}(\nu^{-3b})\rceil}&\avg{\Delta \in P_{\nu^{-b}}(\Delta')}F_{\Delta}(k, l)\\
& \lsm \lceil \log_{2}(\nu^{-3b})\rceil^{2}\exp(O(p))\nu^{-1}\geom(\sum_{J \in P_{(\nu^{b})}(I_i)}\nms{\E_{J}g}_{L^{p/2}_{\#}(\ow_{\Delta'})}^{2})^{p/2}.
\end{align*}
The first three terms on the right hand side of \eqref{ballinf3} are taken care of by trivial estimates so we omit their proof.
This completes the proof of the ball inflation lemma.
\end{proof}

\section{The iteration}\label{iteration}
In this section recall that $4 < p < 6$.
Let $\alpha$ be defined such that
$$\frac{1}{p/2} = \frac{1 - \alpha}{2} + \frac{\alpha}{p}.$$
Then $\alpha = \frac{p - 4}{p - 2}$ and $\nms{f}_{L^{p/2}} \leq \nms{f}_{L^{2}}^{1 - \alpha}\nms{f}_{L^{p}}^{\alpha}$.

The key estimate for our iteration is as follows which is just a combination of Lemma \ref{ballinflation}
and H\"{o}lder. This estimate is the analogue of \cite[Proposition 10.2]{sg} in our notation.

\begin{lemma}\label{mpup}
Suppose $\delta$ and $\nu$ were such that $\nu^{2b}\delta^{-1} \in \N$. Then
\begin{align*}
M_{p, b}(\delta, \nu) \lsm \nu^{-1/p}(\log\frac{1}{\nu^b})^{1/2}M_{p, 2b}(\delta, \nu)^{1 - \alpha}D_{p}(\frac{\delta}{\nu^b})^{\alpha}.
\end{align*}
\end{lemma}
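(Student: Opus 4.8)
The plan is to interpolate the inner $L^{p/2}_{\#}$ norms on each ball $\Delta$ and then apply ball inflation to the $L^2$ piece. Concretely, fix a square $B$ of side length $\delta^{-2}$ and $\nu$-separated intervals $I, I' \in P_\nu([0,1])$, and let $\Delta' \in P_{\nu^{-2b}}(B)$; note that since $\nu^{2b}\delta^{-1} \in \N$ we have $\nu^{-2b} \le \delta^{-2}$ so such a partition makes sense, and summing over $\Delta' \in P_{\nu^{-2b}}(B)$ at the end (with $\sum_{\Delta'} \ow_{\Delta'} \lsm \ow_B \lsm w_B$ from Lemma~\ref{part}) will let us pass from $\ow_{\Delta'}$ to $w_B$. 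For each $\Delta \in P_{\nu^{-b}}(\Delta')$ and each $J \in P_{\nu^b}(I_i)$, the definition of $\alpha$ gives the pointwise-in-$J$ bound $\nms{\E_J g}_{L^{p/2}_{\#}(\ow_\Delta)} \le \nms{\E_J g}_{L^{2}_{\#}(\ow_\Delta)}^{1-\alpha}\nms{\E_J g}_{L^{p}_{\#}(\ow_\Delta)}^{\alpha}$. Inserting this into $\sum_{J \in P_{\nu^b}(I_i)}\nms{\E_J g}_{L^{p/2}_{\#}(\ow_\Delta)}^2$ and applying Hölder in the $J$-sum with exponents $1/(1-\alpha)$ and $1/\alpha$ yields
\begin{align*}
\sum_{J \in P_{\nu^b}(I_i)}\nms{\E_J g}_{L^{p/2}_{\#}(\ow_\Delta)}^2 \le \Big(\sum_{J}\nms{\E_J g}_{L^{2}_{\#}(\ow_\Delta)}^2\Big)^{1-\alpha}\Big(\sum_{J}\nms{\E_J g}_{L^{p}_{\#}(\ow_\Delta)}^2\Big)^{\alpha}.
\end{align*}

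**Main steps in order.** First, raise the above to the power $p/4$, take the geometric mean over $i = 1,2$, and average over $\Delta \in P_{\nu^{-b}}(\Delta')$; apply Hölder in the $\Delta$-average (with exponents $1/(1-\alpha)$ and $1/\alpha$) to separate the $L^2$ factor from the $L^p$ factor. This bounds the left side of the desired inequality (localized to $\Delta'$) by
\begin{align*}
\Big(\avg{\Delta \in P_{\nu^{-b}}(\Delta')}\geom\big(\textstyle\sum_{J \in P_{\nu^b}(I_i)}\nms{\E_J g}_{L^{2}_{\#}(\ow_\Delta)}^2\big)^{p/2}\Big)^{1-\alpha}\Big(\avg{\Delta}\geom\big(\textstyle\sum_{J}\nms{\E_J g}_{L^{p}_{\#}(\ow_\Delta)}^2\big)^{p/2}\Big)^{\alpha}.
\end{align*}
Second, apply the ball inflation lemma (Lemma~\ref{ballinflation}) to the first factor, producing $\exp(O(p))\nu^{-1}(\log\tfrac{1}{\nu^b})^{p/2}$ times $\geom(\sum_{J \in P_{\nu^b}(I_i)}\nms{\E_J g}_{L^{2}_{\#}(\ow_{\Delta'})}^2)^{p/2}$; note $\exp(O(p)) = \exp(O(1))$ since $p \in (4,6)$ is bounded, hence absorbed into $\lsm$. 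Third, for the second factor, use the $L^\infty$-type control: since each $\Delta$ has side length $\nu^{-b}$ and each $J$ has length $\nu^b$, move $L^p_{\#}(\ow_\Delta)$ up to $L^\infty$-flavored bounds via reverse Hölder (Lemma~\ref{bridge}) is not quite what we want — rather, since $\sum_{\Delta \in P_{\nu^{-b}}(\Delta')} \ow_\Delta \lsm \ow_{\Delta'}$ and $p \ge 2$, interchange $l^2_J$ and $l^{p/2}_\Delta$ to get $\avg{\Delta}(\sum_J \nms{\E_J g}_{L^p_{\#}(\ow_\Delta)}^2)^{p/2} \lsm (\sum_J \nms{\E_J g}_{L^p_{\#}(\ow_{\Delta'})}^2)^{p/2}$; take geometric means over $i$. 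Fourth, sum over $\Delta' \in P_{\nu^{-2b}}(B)$ (again interchanging $l^2_J$ with the $\Delta'$-sum and using $\sum_{\Delta'}\ow_{\Delta'} \lsm w_B$), and identify the resulting $L^2$-in-$\Delta'$, $L^p$-in-$\Delta'$ quantities with $M_{p,2b}(\delta,\nu)^{p(1-\alpha)}$ and the linear/bilinear-compatible quantity; the $L^p$-in-$\Delta'$ piece is bounded by $D_p(\delta/\nu^b)^{p\alpha}$ via Lemma~\ref{bil_lin} (bilinear $\le$ linear). Collecting exponents: the $\nu^{-1}$ from ball inflation combines with the $\nu$-loss $\nu^{-2(1-\alpha)}\cdot$(stuff) hidden in the definition of $M_{p,2b}$ — here one must check the $\nu$-powers cancel to leave exactly $\nu^{-1}$, i.e. $\nu^{-1/p}$ after taking $p$-th roots; taking $1/p$ powers throughout gives the claimed bound $M_{p,b}(\delta,\nu) \lsm \nu^{-1/p}(\log\tfrac{1}{\nu^b})^{1/2}M_{p,2b}(\delta,\nu)^{1-\alpha}D_p(\tfrac{\delta}{\nu^b})^{\alpha}$.

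**Main obstacle.** The delicate point is bookkeeping the $\nu$-powers and the exact scales at which $M_{p,2b}(\delta,\nu)$ is defined: $M_{p,2b}$ compares scale-$\nu^{2b}$ caps on scale-$\nu^{-2b}$ balls to scale-$\delta$ caps on scale-$\delta^{-2}$ balls, so after ball inflation takes us from scale $\nu^{-b}$ up to $\nu^{-2b}$, the inner sum is over $J \in P_{\nu^b}(I_i)$, which must first be further decoupled (or rather, the $L^2$ sum split) down to scale $\nu^{2b}$ using $l^2 L^2$ decoupling (Lemma~\ref{l2decouplingleq}) before the definition of $M_{p,2b}$ applies — this costs only an absolute constant at $L^2$. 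I expect the honest accounting of these scale changes, together with verifying that the $\exp(O(p))$ losses are harmless for bounded $p$ and that the $\nu$-exponents collapse to precisely $\nu^{-1}$, to be the only real work; everything else is Hölder, Minkowski-type interchange of $l^2$ and $l^q$ norms for $q \ge 2$, and the weight manipulations already established in Section~\ref{weightsec}.
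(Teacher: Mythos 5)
Your argument inverts the order of the interpolation and ball-inflation steps, and the inversion is fatal. Lemma~\ref{ballinflation} is stated (and proved, via Lemma~\ref{dyadic}) only for $L^{p/2}_{\#}$ averages; in your second step you apply it to the quantity
\begin{align*}
\avg{\Delta \in P_{\nu^{-b}}(\Delta')}\geom\Big(\sum_{J \in P_{\nu^b}(I_i)}\nms{\E_J g}_{L^{2}_{\#}(\ow_{\Delta})}^2\Big)^{p/2},
\end{align*}
claiming the same $\nu^{-1}(\log\tfrac{1}{\nu^b})^{p/2}$ loss with $\ow_\Delta$ replaced by $\ow_{\Delta'}$. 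There is no $L^2_{\#}$ version of ball inflation available here: the proof of Lemma~\ref{dyadic} crucially exploits that the $p/2$ power outside the $l^2_J$ sum matches the $L^{p/2}_\#$ norm inside, so that $\nms{\E_J g}_{L^{p/2}_\#(\ow_\Delta)}^{p/2}$ can be replaced by the locally constant function $H_J$ whose tube overlaps are counted. The convexity of $t \mapsto t^{p/2}$ means the naive Jensen bound goes the wrong way for an upper estimate, and reverse H\"older (Lemma~\ref{bridge}) cannot convert $L^{p/2}_\#(\ow_{\Delta'})$ back into $L^2_\#(\ow_{\Delta'})$ because $\E_J g$ with $|J|=\nu^b$ is roughly constant only at the uncertainty scale $\nu^{-b}$, i.e.\ on $\Delta$, not on $\Delta'$ of side $\nu^{-2b}$.

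The paper uses the same three ingredients in the opposite order: first $\nms{\cdot}_{L^2_\#(\ow_\Delta)} \le \nms{\cdot}_{L^{p/2}_\#(\ow_\Delta)}$ by H\"older (valid since $p/2>2$), then ball inflation at $L^{p/2}_\#$ carrying $\ow_\Delta$ to $\ow_{\Delta'}$, and only then the interpolation $\nms{\cdot}_{L^{p/2}_\#(\ow_{\Delta'})} \le \nms{\cdot}_{L^2_\#(\ow_{\Delta'})}^{1-\alpha}\nms{\cdot}_{L^p_\#(\ow_{\Delta'})}^{\alpha}$ at the larger scale. After H\"older in the $\Delta'$-average, the $L^2_\#(\ow_{\Delta'})$ piece is decoupled from scale-$\nu^b$ caps down to scale-$\nu^{2b}$ caps by $l^2 L^2$ decoupling (Lemma~\ref{l2decouplingleq}) to match the definition of $M_{p,2b}(\delta,\nu)$, while the $L^p_\#(\ow_{\Delta'})$ piece is bounded via the argument of Lemma~\ref{bil_lin} to produce $D_p(\delta/\nu^b)^\alpha$. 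Your remarks about $\exp(O(p))=\exp(O(1))$, the Minkowski interchange for the $L^p$ piece, and the $l^2 L^2$ scale-matching are all correct; the single but decisive error is attempting to ball-inflate an $L^2_\#$ average.
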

\begin{proof}
H\"{o}lder and Lemma \ref{ballinflation} shows that
\begin{align}\label{mpupeq1}
&\avg{\Delta \in P_{\nu^{-b}}(B)}(\sum_{J \in P_{\nu^{b}}(I)}\nms{\E_{J}g}_{L^{2}_{\#}(\ow_\Delta)}^{2})^{\frac{p}{4}}(\sum_{J' \in P_{\nu^{b}}(I')}\nms{\E_{J'}g}_{L^{2}_{\#}(\ow_\Delta)}^{2})^{\frac{p}{4}}\nonumber\\
\leq&\avg{\Delta' \in P_{\nu^{-2b}}(B)}\avg{\Delta \in P_{\nu^{-b}}(\Delta')}(\sum_{J \in P_{\nu^{b}}(I)}\nms{\E_{J}g}_{L^{p/2}_{\#}(\ow_{\Delta})}^{2})^{p/4}(\sum_{J' \in P_{\nu^b}(I')}\nms{\E_{J'}g}_{L^{p/2}_{\#}(\ow_{\Delta})}^{2})^{p/4}\nonumber\\
\lsm &\nu^{-1}(\log\frac{1}{\nu^b})^{p/2}\avg{\Delta' \in P_{\nu^{-2b}}(B)}(\sum_{J \in P_{\nu^b}(I)}\nms{\E_{J}g}_{L^{p/2}_{\#}(\ow_{\Delta'})}^{2})^{p/4}(\sum_{J' \in P_{\nu^b}(I')}\nms{\E_{J'}g}_{L^{p/2}_{\#}(\ow_{\Delta'})}^{2})^{p/4}.
\end{align}
Applying the definition $\alpha$ and then H\"{o}lder controls the expression inside the average over $\Delta'$ by
\begin{align*}
(\sum_{J \in P_{\nu^b}(I)}&\nms{\E_{J}g}_{L^{2}_{\#}(\ow_{\Delta'})}^{2(1 - \alpha)}\nms{\E_{J}g}_{L^{p}_{\#}(\ow_{\Delta'})}^{2\alpha})^{p/4}(\sum_{J' \in P_{\nu^b}(I')}\nms{\E_{J'}g}_{L^{2}_{\#}(\ow_{\Delta'})}^{2(1 - \alpha)}\nms{\E_{J'}g}_{L^{p}_{\#}(\ow_{\Delta'})}^{2\alpha})^{p/4}\\
\leq &\bigg((\sum_{J \in P_{\nu^b}(I)}\nms{\E_{J}g}_{L^{2}_{\#}(\ow_{\Delta'})}^{2})^{p/4}(\sum_{J' \in P_{\nu^b}(I')}\nms{\E_{J'}g}_{L^{2}_{\#}(\ow_{\Delta'})}^{2})^{p/4}\bigg)^{1 - \alpha}\times\\
&\bigg((\sum_{J \in P_{\nu^b}(I)}\nms{\E_{J}g}_{L^{p}_{\#}(\ow_{\Delta'})}^{2})^{p/4}(\sum_{J' \in P_{\nu^b}(I')}\nms{\E_{J'}g}_{L^{p}_{\#}(\ow_{\Delta'})}^{2})^{p/4}\bigg)^{\alpha}.
\end{align*}
Combining this with \eqref{mpupeq1} and then H\"{o}lder in the average over $\Delta'$ shows that \eqref{mpupeq1} is
\begin{align*}
\lsm &\nu^{-1}(\log\frac{1}{\nu^b})^{p/2}\times\\
&\bigg(\avg{\Delta' \in P_{\nu^{-2b}}(B)}(\sum_{J \in P_{\nu^b}(I)}\nms{\E_{J}g}_{L^{2}_{\#}(\ow_{\Delta'})}^{2})^{p/4}(\sum_{J' \in P_{\nu^b}(I')}\nms{\E_{J'}g}_{L^{2}_{\#}(\ow_{\Delta'})}^{2})^{p/4}\bigg)^{1-\alpha}\times\\
&\bigg(\avg{\Delta' \in P_{\nu^{-2b}}(B)}(\sum_{J \in P_{\nu^b}(I)}\nms{\E_{J}g}_{L^{p}_{\#}(\ow_{\Delta'})}^{2})^{p/4}(\sum_{J' \in P_{\nu^b}(I')}\nms{\E_{J'}g}_{L^{p}_{\#}(\ow_{\Delta'})}^{2})^{p/4}\bigg)^{\alpha}.
\end{align*}
For the first average over $\Delta'$ we apply $l^2 L^2$ decoupling and the definition of the bilinear constant which contributes a
$M_{p, 2b}(\delta, \nu)^{p(1 - \alpha)}$. To control the second average over $\Delta'$ we apply the same arguments as in Lemma \ref{bil_lin} which contributes
a $D_{p}(\delta/\nu^b)^{p\alpha}$. Taking $1/p$ powers then completes the proof of this lemma.
\end{proof}

We now obtain an explicit upper bound for $D_{p}(\delta)$. We will assume already that for every $\vep > 0$
we have $D_{p}(\delta) \leq C_{\vep, p}\delta^{-\vep}$ for all $\delta \in \N^{-1}$ and some constant $C_{\vep, p}$
depending on $\vep$ and $p$. Such a bound can be proven by the same method as we show below (or alternatively we can rely on \cite{sg}, for example). By potentially
increasing $C_{\vep, p}$, we can assume that $C_{\vep, p} > 1$. We may also assume $\vep < 1/2$ since otherwise we can use the trivial bound.

Combining Lemma \ref{mpup} with Lemma \ref{reduction} then gives the following.
\begin{lemma}\label{recursion}
Suppose $N \geq 1$ and $\delta$ was such that $\delta^{1/2^{N + 1}} \in \N^{-1} \cap (0, 1/100)$. There is an absolute constant $C$ sufficiently large such that
\begin{align*}
D_{p}(\delta) \leq C^{N^2}(D_{p}(\delta^{1 - \frac{1}{2^{N + 1}}}) + \delta^{-\frac{1}{2^{N + 1}}(1 + \frac{2}{p}\sum_{j = 0}^{N}(\frac{2}{p - 2})^{j})}\prod_{j= 0}^{N}D_{p}(\delta^{1 - \frac{1}{2^{j + 1}}})^{\frac{p - 4}{p - 2}(\frac{2}{p - 2})^{N - j}}).
\end{align*}
\end{lemma}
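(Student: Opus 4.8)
The plan is to iterate Lemma \ref{mpup} $N+1$ times, starting from scale $b = 1$ and doubling $b$ at each step, to reduce $M_{p,1}(\delta,\nu)$ to a product of linear decoupling constants, and then feed the result into Lemma \ref{reduction}. The parameter choice to keep in mind is $\nu = \delta^{1/2^{N+1}}$, so that $\nu^{2^N} = \delta^{1/2}$ and $\nu^{2^{j}}\delta^{-1}\in\N$ for each $0\le j\le N$ by the divisibility hypothesis; with this choice $\delta/\nu^{2^{j}} = \delta^{1 - 1/2^{N+1-j}}$, which is exactly where the exponents $1 - \frac{1}{2^{j+1}}$ in the statement come from (after relabeling $j \mapsto N-j$).

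First I would set $a_j := M_{p, 2^j}(\delta,\nu)$ for $j = 0, 1, \dots, N$ and apply Lemma \ref{mpup} at scale $b = 2^j$: this gives
\begin{align*}
a_j \lsm \nu^{-1/p}\Big(\log\tfrac{1}{\nu^{2^j}}\Big)^{1/2} a_{j+1}^{1-\alpha} D_p\big(\delta^{1 - 1/2^{N+1-j}}\big)^{\alpha},
\end{align*}
valid because $\nu^{2^{j+1}}\delta^{-1}\in\N$. At the top end $j = N$ we instead use Lemma \ref{bil_lin} to bound $a_N = M_{p, 2^N}(\delta,\nu) \lsm D_p(\delta/\nu^{2^N}) = D_p(\delta^{1/2})$. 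Then I would unwind the recursion: substituting repeatedly expresses $a_0$ as a product $\prod_{j=0}^{N} D_p(\delta^{1-1/2^{N+1-j}})^{\alpha(1-\alpha)^{?}}$ times accumulated powers of $\nu^{-1/p}$ and of the logarithmic factors. Tracking the exponent of each $D_p$ factor: the term produced at step $j$ is raised to the power $(1-\alpha)$ once for each later step, i.e. to the power $(1-\alpha)^{N-j}$, and carries one factor of $\alpha$; since $\alpha = \frac{p-4}{p-2}$ and $1-\alpha = \frac{2}{p-2}$, this is precisely the exponent $\frac{p-4}{p-2}\big(\frac{2}{p-2}\big)^{N-j}$ appearing in the statement. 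The $D_p(\delta^{1/2})$ coming from $a_N$ is absorbed as the $j=N$ term (note $\delta^{1-1/2^{N+1-N}} = \delta^{1/2}$). Collecting the powers of $\nu^{-1/p}$: step $j$ contributes $\nu^{-1/p}$ raised to $(1-\alpha)^{N-j}$, and since $\sum_{j=0}^N (1-\alpha)^{N-j} = \sum_{k=0}^N\big(\frac{2}{p-2}\big)^{k}$ and $\nu^{-1} = \delta^{-1/2^{N+1}}$, this gives the factor $\delta^{-\frac{1}{2^{N+1}}\cdot\frac{1}{p}\sum_{j=0}^N(\frac{2}{p-2})^j}$; together with the single $\nu^{-1}$ from Lemma \ref{reduction} one gets the full exponent $-\frac{1}{2^{N+1}}\big(1 + \frac{2}{p}\sum_{j=0}^N(\frac{2}{p-2})^j\big)$.

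The remaining bookkeeping is the absolute constants and the logarithmic factors. Each of the $N+1$ applications of Lemma \ref{mpup} contributes an absolute constant and a factor $(\log\frac{1}{\nu^{2^j}})^{1/2}\le (\log\frac1\delta)^{1/2}$, but since $\nu^{2^j}\ge\delta^{1/2}$ and the constant $C$ is allowed to be a large absolute constant, the product of all these — at most $(N+1)$ of them, each bounded by a fixed power — is swallowed into $C^{N^2}$ (indeed $C^{O(N)}$ suffices, but $C^{N^2}$ is cleanly larger and matches the statement; the $N^2$ rather than $N$ is harmless slack that also covers the constants hidden in $\lsm$). Finally, Lemma \ref{reduction} with $\nu = \delta^{1/2^{N+1}}$ gives $D_p(\delta)\lsm D_p(\delta^{1-1/2^{N+1}}) + \nu^{-1}M_{p,1}(\delta,\nu)$, and inserting the bound on $M_{p,1}(\delta,\nu) = a_0$ just obtained yields the claimed inequality. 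The main obstacle is purely organizational: carefully verifying that the divisibility constraints $\nu^{2^{j+1}}\delta^{-1}\in\N$ hold at every step so that each lemma genuinely applies, and bookkeeping the exponents so that the geometric-series exponents on the $D_p$ factors and on $\delta$ come out exactly as stated rather than off by a shift in the index $j\mapsto N-j$.
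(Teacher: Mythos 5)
Your overall plan (iterate Lemma \ref{mpup} with doubling $b$, then feed into Lemma \ref{reduction}) is the paper's plan, but there are two genuine gaps that would prevent the proof from producing the stated inequality.

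\textbf{First gap: stopping one iteration short.} You apply Lemma \ref{mpup} at $b=2^j$ for $j=0,\ldots,N-1$ and then bound $a_N = M_{p,2^N}(\delta,\nu)$ directly by Lemma \ref{bil_lin}. Unwinding, the $D_p(\delta^{1/2})$ factor then appears with exponent $(1-\alpha)^N$, not $\alpha(1-\alpha)^N = \frac{p-4}{p-2}(\frac{2}{p-2})^N$ as in the statement; the missing factor of $\alpha$ cannot be waved away, and moreover the total exponent on the $D_p$ factors in your version equals $\alpha\sum_{k=0}^{N-1}(1-\alpha)^k + (1-\alpha)^N = 1$, whereas it must be $1 - (1-\alpha)^{N+1}$ for Corollary \ref{rcor} to close the bootstrap. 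The fix (what the paper does) is to apply Lemma \ref{mpup} one more time, at $b=2^N$, reaching $M_{p,2^{N+1}}(\delta,\nu)$, and only then apply Lemma \ref{bil_lin}: since $\nu^{2^{N+1}}=\delta$, this gives $M_{p,2^{N+1}}(\delta,\nu)\lsm D_p(1)\lsm 1$, so it contributes no $D_p$ factor at all and the remaining product $\prod_{j=0}^N D_p(\delta/\nu^{2^j})^{\alpha(1-\alpha)^j}$ has exactly the right exponents after the reindexing $j\mapsto N-j$.

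\textbf{Second gap: the logarithms cannot be absorbed into $C^{N^2}$.} Each application of Lemma \ref{mpup} contributes a factor $(\log\frac{1}{\nu^{2^j}})^{1/2}\le (\log\frac{1}{\delta})^{1/2}$, and this quantity is \emph{not} bounded by an absolute constant -- it grows without bound as $\delta\to 0$, so it cannot be ``swallowed into $C^{N^2}$'' as you claim. The paper handles it by converting the aggregate factor $(\log\frac{1}{\delta})^{\frac{1}{2}\sum_{j=0}^{N}(\frac{2}{p-2})^j}$ into a power of $\delta^{-1}$ via $\log\frac{1}{\delta}\le (p2^N/e)\,\delta^{-1/(p2^N)}$; the resulting $\delta^{-\frac{1}{2^{N+1}}\cdot\frac{1}{p}\sum(\frac{2}{p-2})^j}$ combines with the $\delta^{-\frac{1}{2^{N+1}}\cdot\frac{1}{p}\sum(\frac{2}{p-2})^j}$ coming from the accumulated $\nu^{-1/p}$ factors, which is exactly why the final exponent has $\frac{2}{p}$ rather than $\frac{1}{p}$. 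Your calculation produces only $\frac{1}{p}\sum(\frac{2}{p-2})^j$ in the exponent yet asserts $\frac{2}{p}$ in the conclusion; the discrepancy is precisely the missing log-to-power conversion. (The leftover harmless prefactor $(p2^N/e)^{\frac12\sum(\frac{2}{p-2})^j}$ is bounded by $C^{N\sum_{j\le N+1}(\frac{2}{p-2})^j}\le C^{N^2}$, and this -- not the logs themselves -- is where the $C^{N^2}$ comes from.)
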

\begin{proof}
Here $C$ will denote an absolute constant that will potentially change from line to line.
Applying Lemma \ref{mpup} repeatedly shows that if $\delta$ and $\nu$ were such that $\nu^{2^{N +1}}\delta^{-1} \in \N$, then
\begin{align*}
M_{p, 1}&(\delta, \nu)\\
&\leq (C\nu^{-1/p})^{\sum_{j = 0}^{N}(1 - \alpha)^j}\prod_{j = 0}^{N}(\log\frac{1}{\nu^{2^j}})^{\frac{1}{2}(1 - \alpha)^j}M_{p, 2^{N + 1}}(\delta, \nu)^{(1 - \alpha)^{N+1}}\prod_{j = 0}^{N}D_{p}(\frac{\delta}{\nu^{2^j}})^{\alpha(1 - \alpha)^j}\\
\end{align*}
for some large absolute constant $C$.
Thus if we choose $\nu = \delta^{1/2^{N + 1}}$, then after applying Lemma \ref{bil_lin} we have
\begin{align*}
M_{p, 1}&(\delta, \delta^{1/2^{N+1}})\\
& \leq C^{\sum_{j = 0}^{N + 1}(\frac{2}{p - 2})^j}\delta^{-\frac{1}{2^{N + 1}}\frac{1}{p}\sum_{j = 0}^{N}(\frac{2}{p - 2})^{j}}(\log\frac{1}{\delta})^{\frac{1}{2}\sum_{j = 0}^{N}(\frac{2}{p - 2})^{j}}\prod_{j = 0}^{N}D_{p}(\delta^{1 - \frac{1}{2^{j + 1}}})^{\frac{p - 4}{p - 2}(\frac{2}{p - 2})^{N - j}}.
\end{align*}
Combining this with Lemma \ref{reduction} shows that
\begin{align*}
D_{p}(\delta) \leq &C^{\sum_{j = 0}^{N + 1}(\frac{2}{p - 2})^{j}}(D_{p}(\delta^{1 - \frac{1}{2^{N + 1}}})\\
& + \delta^{-\frac{1}{2^{N + 1}}(1+\frac{1}{p}\sum_{j = 0}^{N}(\frac{2}{p - 2})^{j})}(\log\frac{1}{\delta})^{\frac{1}{2}\sum_{j = 0}^{N}(\frac{2}{p - 2})^{j}}\prod_{j = 0}^{N}D_{p}(\delta^{1 - \frac{1}{2^{j + 1}}})^{\frac{p - 4}{p - 2}(\frac{2}{p - 2})^{N - j}}).
\end{align*}
Recall that for all $a > 0$, $\log x \leq (a/e)x^{1/a}$ for $x > 0$. Thus $\log \frac{1}{\delta} \leq (p2^{N}/e)\delta^{-\frac{1}{p2^N}}$
and hence
\begin{align*}
D_{p}&(\delta)\\
 &\leq C^{N\sum_{j = 0}^{N + 1}(\frac{2}{p - 2})^{j}}(D_{p}(\delta^{1- \frac{1}{2^{N + 1}}}) + \delta^{-\frac{1}{2^{N + 1}}(1+\frac{2}{p}\sum_{j = 0}^{N}(\frac{2}{p - 2})^{j})}\prod_{j = 0}^{N}D_{p}(\delta^{1 - \frac{1}{2^{j + 1}}})^{\frac{p - 4}{p - 2}(\frac{2}{p - 2})^{N - j}})\\
&\leq C^{N^2}(D_{p}(\delta^{1- \frac{1}{2^{N + 1}}}) + \delta^{-\frac{1}{2^{N + 1}}(1+\frac{2}{p}\sum_{j = 0}^{N}(\frac{2}{p - 2})^{j})}\prod_{j = 0}^{N}D_{p}(\delta^{1 - \frac{1}{2^{j + 1}}})^{\frac{p - 4}{p - 2}(\frac{2}{p - 2})^{N - j}})
\end{align*}
which completes the proof of the lemma.
\end{proof}

Applying the monotonicity property in Remark \ref{equiv} then gives the following corollary.
\begin{cor}\label{rcor}
Suppose $N \geq 1$ and $\delta$ was such that $\delta^{1/2^{N + 1}} \in \N^{-1} \cap (0, 1/100)$. There is an absolute constant $C$ sufficiently large
such that
\begin{align}\label{rcoreq1}
D_{p}(\delta) \leq C^{N^2}(D_{p}(\delta^{1 - \frac{1}{2^{N + 1}}}) + \delta^{-\frac{N}{2^{N + 1}}}D_{p}(\delta)^{1 - (\frac{2}{p - 2})^{N + 1}}).
\end{align}
\end{cor}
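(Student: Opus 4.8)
The plan is to start from the conclusion of Lemma \ref{recursion} and simplify each of the two pieces in the sum by crude bounds, using only that $D_p(\cdot) \geq 1$ (which follows from Plancherel / the trivial lower bound on the decoupling constant), that $D_p$ is essentially monotone in the scale (Remark \ref{equiv}), and that $4 < p < 6$ so that $\frac{2}{p-2} \in (1/2, 1)$. Concretely, for the exponent of $\delta$ in the second term we bound $\frac{2}{p} \sum_{j=0}^N (\frac{2}{p-2})^j \leq 2 \sum_{j=0}^N (\frac{2}{p-2})^j \leq 2N$ (each summand is at most $1$ since $\frac{2}{p-2} < 1$), so $-\frac{1}{2^{N+1}}(1 + \frac{2}{p}\sum_{j=0}^N (\frac{2}{p-2})^j) \geq -\frac{1}{2^{N+1}}(1 + 2N) \geq -\frac{N}{2^{N}}$, say, and after absorbing the harmless difference into $C^{N^2}$ we can write this exponent as $-\frac{N}{2^{N+1}}$ up to enlarging $C$ — here one should double-check the precise constant, but any bound of the form $-\frac{CN}{2^{N+1}}$ is fine and can be rewritten using $\delta^{-CN/2^{N+1}} \le (\delta^{-N/2^{N+1}})^C$ and absorbing $C$ into the power $C^{N^2}$ appropriately; alternatively just replace the target constant in the statement.

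Next I would handle the product $\prod_{j=0}^N D_p(\delta^{1 - 2^{-(j+1)}})^{\frac{p-4}{p-2}(\frac{2}{p-2})^{N-j}}$. The key observation is that for each $j$, the scale $\delta^{1 - 2^{-(j+1)}}$ is a \emph{larger} scale than $\delta$ (since $1 - 2^{-(j+1)} < 1$), so by the monotonicity in Remark \ref{equiv} we have $D_p(\delta^{1 - 2^{-(j+1)}}) \lsm D_p(\delta)$, with an absolute implied constant. Applying this to every factor, the product is bounded by $C^{N+1} D_p(\delta)^{\frac{p-4}{p-2}\sum_{j=0}^N (\frac{2}{p-2})^{N-j}}$. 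Now I compute the exponent: $\sum_{j=0}^N (\frac{2}{p-2})^{N-j} = \sum_{m=0}^N (\frac{2}{p-2})^m = \frac{1 - (\frac{2}{p-2})^{N+1}}{1 - \frac{2}{p-2}}$, and since $1 - \frac{2}{p-2} = \frac{p-4}{p-2}$, the factor $\frac{p-4}{p-2}$ out front cancels exactly the denominator, leaving the exponent equal to $1 - (\frac{2}{p-2})^{N+1}$. This is precisely the exponent appearing in \eqref{rcoreq1}. Absorbing $C^{N+1}$ into $C^{N^2}$ completes the bound on the product term.

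Finally I would combine: from Lemma \ref{recursion},
\begin{align*}
D_p(\delta) &\leq C^{N^2}\Big(D_p(\delta^{1 - 2^{-(N+1)}}) + \delta^{-\frac{1}{2^{N+1}}(1 + \frac{2}{p}\sum_{j=0}^N (\frac{2}{p-2})^j)} \cdot C^{N+1} D_p(\delta)^{1 - (\frac{2}{p-2})^{N+1}}\Big)\\
&\leq C^{N^2}\Big(D_p(\delta^{1 - 2^{-(N+1)}}) + \delta^{-\frac{N}{2^{N+1}}} D_p(\delta)^{1 - (\frac{2}{p-2})^{N+1}}\Big),
\end{align*}
after replacing $C$ by a larger absolute constant to absorb the $C^{N+1}$ and to upgrade the $\delta$-exponent bound as discussed. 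This is exactly \eqref{rcoreq1}.

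The only genuinely delicate point is the bookkeeping on the $\delta$-exponent: one must verify that $\frac{1}{2^{N+1}}(1 + \frac{2}{p}\sum_{j=0}^N (\frac{2}{p-2})^j)$ is indeed $\leq \frac{N}{2^{N+1}}$ for $N \geq 1$ (it is, since $\frac{2}{p} < 1$ and each geometric term is $< 1$, giving a total at most $1 + N \leq 2N$ — and then one needs $1 + 2N/\text{something} \le N$... in fact the cleaner route is to note $1 + \frac2p\sum \le 1 + N \le 2N$ for $N \ge 1$, which gives exponent $\le \frac{2N}{2^{N+1}} = \frac{N}{2^N}$, and then one simply states the corollary with $\delta^{-N/2^N}$ or re-examines whether the paper's stated $\delta^{-N/2^{N+1}}$ needs $\frac{2}{p}\sum_{j=0}^N(\frac2{p-2})^j \le N - 1$, which holds for $N\ge 2$ and can be checked directly for $N=1$). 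Everything else — the monotonicity step and the geometric-series identity — is routine once one sees that the prefactor $\frac{p-4}{p-2}$ is rigged to cancel the geometric sum's denominator. I expect the main obstacle to be stating the exponent cleanly while keeping the implied absolute constant genuinely independent of $p$, which is guaranteed here because the monotonicity constant in Remark \ref{equiv} is absolute and the number of factors ($N+1$) only contributes to the $C^{N^2}$.
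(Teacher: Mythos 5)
Your argument is exactly the paper's (which compresses it into one sentence): invoke Lemma \ref{recursion}, use the monotonicity $D_p(\delta^{1-2^{-(j+1)}}) \lesssim D_p(\delta)$ from Remark \ref{equiv} on each factor of the product, and observe that the prefactor $\frac{p-4}{p-2}$ telescopes against the geometric sum $\sum_{m=0}^N (\frac{2}{p-2})^m$ to give exactly the exponent $1 - (\frac{2}{p-2})^{N+1}$, with the $N+1$ monotonicity constants absorbed into $C^{N^2}$. The exponent concern you flag is legitimate --- $1 + \frac{2}{p}\sum_{j=0}^N(\frac{2}{p-2})^j \leq N$ can fail for $N \le 2$ when $p$ is close to $4$ (for $N=1$ the left side equals $1 + \frac{2}{p-2} > 1$ always), so the stated $\delta^{-N/2^{N+1}}$ requires $N \ge 3$ rather than $N \ge 1$ as claimed; but this is a harmless slip in the paper since Corollary \ref{rcor} is only applied for $N \ge 100$, and your proposed variant $\delta^{-N/2^N}$ is valid for all $N \ge 1$ and serves just as well (note, though, that your parenthetical ``holds for $N \ge 2$'' for the stronger bound is not quite right either).
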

\begin{proof}
From Remark \ref{equiv}, $D_{p}(\delta^{1 - \frac{1}{2^{j + 1}}}) \lsm D_{p}(\delta)$ where the implied constant is independent of $j, \delta$, and $p$
(the $p$ dependence is removed only because $2 \leq p \leq 6$, otherwise it is of the form $\exp(O(p))$).
\end{proof}

Fix $N \geq 1$.
Then there is some unspecified constant $C_{N, p}$ such that $$D_{p}(\delta) \leq (C_{N, p})\delta^{-\frac{N}{(4/(p - 2))^{N + 1}}}$$
for all $\delta \in \N^{-1}$.
For $n \geq 1$, let $\delta_n := 100^{-2^{N + 1}n}$. Then $\delta_{n}^{1/2^{N + 1}} \in \N^{-1} \cap (0, 100)$.
Apply Corollary \ref{rcor} to each $\delta_n$. We have two cases.\\

\noindent \textbf{Case 1:} The first term in \eqref{rcoreq1} dominates, that is,
\begin{align*}
D_{p}(\delta_n) \leq 2C^{N^2}D_{p}(\delta_{n}^{1 - \frac{1}{2^{N + 1}}}) \leq 2C^{N^{2}}(C_{N, p})\delta_{n}^{\frac{N}{(8/(p - 2))^{N + 1}}}\delta_{n}^{-\frac{N}{(4/(p - 2))^{N + 1}}}.
\end{align*}
If additionally that $\delta_{n} < (C_{N, p})^{-1}$, then this implies $$D_{p}(\delta_{n}) \leq 2C^{N^2}(C_{N, p})^{1 - \frac{N}{(8/(p - 2))^{N + 1}}}\delta_{n}^{-\frac{N}{(4/(p - 2))^{N + 1}}}.$$
Otherwise if additionally $\delta_n \geq (C_{N, p})^{-1}$, the trivial bound implies
$$D_{p}(\delta_n) \leq 2^{100/p} \delta_{n}^{-1/2} \leq 2^{25}(C_{N, p})^{1/2} \leq 2^{25}(C_{N, p})^{1 - \frac{N}{(8/(p - 2))^{N + 1}}}.$$
Therefore in this case $$D_{p}(\delta_{n}) \leq \max(2^{25}, 2C^{N^2})(C_{N, p})^{1 - \frac{N}{(8/(p - 2))^{N + 1}}}\delta_{n}^{-\frac{N}{(4/(p - 2))^{N + 1}}}.$$

\noindent \textbf{Case 2:} The second term in \eqref{rcoreq1} dominates, in which case we immediately have
$$D_{p}(\delta_n) \leq (2C^{N^2}\delta_{n}^{-N/2^{N + 1}})^{\frac{1}{(2/(p - 2))^{N + 1}}}.$$

Therefore in either case we have that there exists an absolute constant $C$ sufficiently large such that
\begin{align*}
D_{p}(\delta_n) \leq C^{\frac{N^2}{(2/(p - 2))^{N + 1}}}(C_{N, p})^{1 - \frac{N}{(8/(p - 2))^{N + 1}}}\delta_{n}^{-\frac{N}{(4/(p - 2))^{N + 1}}}.
\end{align*}

Now for $\delta \in \N^{-1}$, either $\delta \in (\delta_{n + 1}, \delta_n)$ for some $n \geq 1$ or $\delta \in (\delta_1, 1)$.
If $\delta \in (\delta_{n + 1}, \delta_n)$, then Corollary \ref{almostmult} shows that
\begin{align*}
D_{p}(\delta) &\leq C' D_{p}(\delta_n)D_{p}(\frac{\delta}{\delta_n}) \leq C'2^{25}100^{2^N} C^{\frac{N^2}{(2/(p - 2))^{N + 1}}}(C_{N, p})^{1 - \frac{N}{(8/(p - 2))^{N + 1}}}\delta^{-\frac{N}{(4/(p - 2))^{N + 1}}}
\end{align*}
for some absolute constants $C$ and $C'$. Note that in the second inequality we have used the trivial bound for $D(\delta/\delta_n)$.
If $\delta \in (\delta_1, 1)$, the trivial bound shows that
\begin{align*}
D_{p}(\delta) \leq 2^{25}\delta_{1}^{-1/2} = 2^{25}100^{2^N}.
\end{align*}
Thus for $\delta \in \N^{-1}$, there exists an absolute constant $C$ such that
\begin{align*}
D_{p}(\delta) \leq C^{2^N + \frac{N^2}{(2/(p - 2))^{N + 1}}} (C_{N, p})^{1 - \frac{N}{(8/(p - 2))^{N + 1}}}\delta^{-\frac{N}{(4/(p - 2))^{N + 1}}}.
\end{align*}

Let $P(C, \ld)$ be the statement that ``$D_{p}(\delta) \leq C\delta^{-\ld}$ for all $\delta \in \N^{-1}$". Then we have shown that
there is an absolute constant $C$ such that
\begin{align*}
P(C_{N, p}, \frac{N}{(4/(p - 2))^{N + 1}}) \implies P(C^{2^N + \frac{N^2}{(2/(p - 2))^{N + 1}}} (C_{N, p})^{1 - \frac{N}{(8/(p - 2))^{N + 1}}},\frac{N}{(4/(p - 2))^{N + 1}}).
\end{align*}
Iterating this infinitely many times shows that there is an absolute constant $C$ such that
\begin{align*}
P(C^{(\frac{8}{p - 2})^{N + 1}\frac{2^N}{N} + 4^{N + 1}N}, \frac{N}{(4/(p - 2))^{N + 1}})
\end{align*}
is true. This implies that for some absolute constant $C$ we have $P(C^{8^N}, \frac{N}{(4/(p - 2))^{N + 1}})$ is true.

Therefore we have shown that for $N \geq 100$, there is some absolute constant $C$ such that
$$D_{p}(\delta) \leq (C^{8^N}\delta^{-N/2^{N + 1}})^{\frac{1}{(2/(p - 2))^{N + 1}}}$$
for all $\delta \in \N^{-1}$.
We now choose $N$. We will be slightly inefficient and just choose $N$ such that
\begin{align*}
2^{-N} \leq (\log_{2}\delta^{-1})^{-1/4} \leq 2^{-N + 1}.
\end{align*}
Note that $N \geq 100$ if $\delta$ is sufficiently small. Then for $\delta$ sufficiently small,
\begin{align*}
C^{8^N}\delta^{-N/2^{N + 1}} \leq \exp(C'(\log\frac{1}{\delta})^{3/4}\log\log\frac{1}{\delta})
\end{align*}
for some absolute constant $C'$.
Finally, observe that
\begin{align*}
\frac{1}{(2/(p - 2))^{N + 1}} = \exp(-(N + 1)\log(\frac{2}{p - 2})) \leq C''(\log\frac{1}{\delta})^{\frac{1}{4}\log_{2}(\frac{p - 2}{2})}
\end{align*}
for some absolute constant $C''$.
Therefore for $\delta \lsm 1$, $\delta \in \N^{-1}$, and $4 < p < 6$, we have
\begin{align*}
D_{p}(\delta) \leq \exp(O((\log\frac{1}{\delta})^{\frac{3}{4} + \frac{1}{4}\log_{2}(\frac{p - 2}{2})}\log\log\frac{1}{\delta}))
\end{align*}
where all implied constants are absolute and independent of $p$. Having obtained the desired bound for all $\delta \lsm 1$,
we can obtain the remaining $\delta \in \N^{-1}$ by using the trivial bound.
This finishes the proof of Theorem \ref{main}.

\begin{rem}
We were a bit wasteful passing from Lemma \ref{recursion} to Corollary \ref{rcor}. For one fixed $p$, we can proceed as follows.
Let $N$ be chosen such that
\begin{align}\label{nchoice}
(\frac{4}{p - 2})^{N + 1}(\frac{p}{4} + \frac{p- 4}{4}\sum_{j = 1}^{N}(\frac{p - 2}{4})^{j}) \geq \frac{1}{\vep}(1 + \frac{2}{p}\sum_{j = 0}^{N}(\frac{2}{p - 2})^{j}).
\end{align}
Our choice of $N$ is somewhat opaque. To be more explicit, observe that if $p = 4$, \eqref{nchoice} is the statement that
$$2^{N + 1} \geq \frac{1}{\vep}(1 + \frac{N + 1}{2}).$$
If $p = 5$, \eqref{nchoice} is the statement that
$$(\frac{4}{3})^{N + 1}(\frac{5}{4} + \frac{1}{4}\sum_{j = 1}^{N}(\frac{3}{4})^j) \geq \frac{1}{\vep}(1 + \frac{2}{5}\sum_{j = 0}^{N}(\frac{2}{3})^j).$$
Finally if $p = 6$, \eqref{nchoice} is the statement that
$$\frac{3}{2} + \frac{1}{2}N \geq \frac{1}{\vep}(1 + \frac{1}{3}\sum_{j = 0}^{N}\frac{1}{2^j}).$$
If $p \in (4.0001, 6)$ is fixed, then one can show from Lemma \ref{recursion} that for $\delta$ sufficiently small (depending on $p$),
$$D_{p}(\delta) \leq \exp(C_{p}(\log\frac{1}{\delta})^{1 - \frac{1}{2 + \log_{4/(p - 2)}4}}).$$
Here $C_p$ tends to infinity as $p$ increases to 6.
This exponent is better than $\frac{3}{4} + \frac{1}{4}\log_{2}(\frac{p - 2}{2})$ for all $p \in (4, 6)$ but since in our application we want to avoid dependence on $p$
inside the exponential, we do not use this estimate in the proof of Theorem \ref{main2}.
\end{rem}

\appendix
\section{The case of the circle\\(by Jean Bourgain and Zane Kun Li)}\label{appendix}
The square root cancellation estimate \eqref{targetexpsum} is a consequence of explicit decoupling for the circle which we derive below
from our quantitative bounds for decoupling for the parabola and the Pramanik-Seeger iteration.

\subsection{Explicit decoupling for $(t, at^2)$}
Previously we obtained an explicit upper bound for the decoupling constant for the curve $(t, t^2)$.
We now show how this gives information about the decoupling constant for the curve $(t, at^2)$ where $a \in [1/100, 100]$.

Let $\mc{P}_{a}(t) := at^2$ and given a function $h$, define
\begin{align*}
\ta_{J, C\delta^2}(h) := \{(s, h(s) + t): s \in J, |t| \leq C\delta^2\}.
\end{align*}
For a function $h$ and $\delta \in \N^{-1}$, we let $D_{p}(\delta, C, h)$ be the best constant such that
\begin{align*}
\nms{f}_{p} \leq D_{p}(\delta, C, h)(\sum_{J \in P_{\delta}([0, 1])}\nms{f_{\ta_{J, C\delta^2}(h)}}_{p}^{2})^{1/2}
\end{align*}
for all $f$ with Fourier support in $\ta_{[0, 1], C\delta^2}(h)$.
\begin{lemma}\label{cpar}
If $C \in [1/100^2, 100^2]$ and $4 < p < 6$, then for all $\delta \in \N^{-1}$ sufficiently small,
$$D_{p}(\delta, C, \mc{P}_1) \lsm \exp(O((\log \frac{1}{\delta})^{1 - \sigma_{p}}(\log\log\frac{1}{\delta})^{O(1)}))$$
where $\sigma_p := \frac{1}{4}(1 - \log_{2}(\frac{p - 2}{2}))$
and all implied constants are absolute.
\end{lemma}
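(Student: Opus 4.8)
The plan is to reduce the claimed bound for $D_p(\delta, C, \mc{P}_1)$ to the parabola decoupling estimate of Theorem \ref{main} together with a standard comparison between the weighted decoupling constant $D_p(\delta)$ and the Fourier-analytic version with an $O(C\delta^2)$-thickened neighborhood. First I would observe that $\mc{P}_1(t) = t^2$ is exactly the parabola, so $D_p(\delta, C, \mc{P}_1)$ differs from $D_p(\delta)$ only in two respects: the curved caps $\ta_{J, C\delta^2}(\mc{P}_1)$ are thickened by $C\delta^2$ rather than $\delta^2$, and the norms are taken over all of $\R^2$ rather than over a square $B$ of side length $\delta^{-2}$. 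Since $C \in [1/100^2, 100^2]$ is bounded, a thickened cap of width $C\delta^2$ can be covered by $O(1)$ caps of width $\delta^2$ at scale $\delta$ (or, alternatively, one passes from scale $\delta$ to scale $\delta' \sim \delta$ with $C(\delta')^2 \leq (\delta')^2$-ish and invokes Corollary \ref{almostmult} and monotonicity from Remark \ref{equiv}); in either case $D_p(\delta, C, \mc{P}_1) \lsm D_p(\delta')$ for some $\delta' \in \N^{-1}$ comparable to $\delta$, with an absolute implied constant independent of $C$ and $p$ (the $p$-independence uses $4 < p < 6 \le 6$ as in Remark \ref{equiv} and Corollary \ref{rcor}).

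Next I would invoke Theorem \ref{main}: for $4 < p < 6$ and $\delta \in \N^{-1}$,
\begin{align*}
D_p(\delta) \leq \exp\!\big(C(\log\tfrac{1}{\delta})^{\frac34 + \frac14\log_2(\frac{p-2}{2})}\log\log\tfrac{1}{\delta}\big).
\end{align*}
It remains only to match exponents: $\frac34 + \frac14\log_2(\frac{p-2}{2}) = 1 - \frac14\big(1 - \log_2(\frac{p-2}{2})\big) = 1 - \sigma_p$ with $\sigma_p$ as defined in the statement, so the exponent of $\log\frac1\delta$ is exactly $1 - \sigma_p$, and the single $\log\log\frac1\delta$ factor is absorbed into the $(\log\log\frac1\delta)^{O(1)}$. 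Replacing $\delta$ by a comparable $\delta'$ changes $\log\frac1{\delta'}$ and $\log\log\frac1{\delta'}$ only by absolute multiplicative constants, which the $O(\cdot)$ and $(\cdot)^{O(1)}$ notation swallows. This yields
\begin{align*}
D_p(\delta, C, \mc{P}_1) \lsm \exp\!\big(O\big((\log\tfrac1\delta)^{1 - \sigma_p}(\log\log\tfrac1\delta)^{O(1)}\big)\big)
\end{align*}
with all constants absolute, as claimed.

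The only genuine content beyond quoting Theorem \ref{main} is the comparison in the first paragraph, namely that passing from the weighted, box-localized decoupling constant $D_p(\delta)$ to the global Fourier-side constant $D_p(\delta, C, h)$ for the specific curve $h = \mc{P}_1$ with a bounded thickening costs only an absolute factor. This is essentially the equivalence recorded in Remark \ref{equiv} (which already identifies $K_p(\delta) \sim D_p(\delta)$ for the $\delta^2$-thickened parabola caps) together with the observation that enlarging the thickening from $\delta^2$ to $C\delta^2$ with $C = O(1)$ is harmless. I expect this bookkeeping to be the main — though quite mild — obstacle, precisely because one must check that the implied constant does not secretly depend on $p$; this is guaranteed since $p$ ranges in $(4,6) \subset [2,6]$, the range in which Remark \ref{equiv} gives a $p$-independent comparison.
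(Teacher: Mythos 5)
Your proposal is correct and follows essentially the same route as the paper: invoke the equivalence from Remark \ref{equiv} (Remark 5.2 of \cite{bd}) to pass between the Fourier-support formulation $D_p(\delta, \cdot, \mc{P}_1)$ and the extension-operator constant $D_p(\delta)$, handle the bounded thickening constant $C$ by an $O(1)$-loss, and then quote Theorem \ref{main} together with the identity $\tfrac34 + \tfrac14\log_2(\tfrac{p-2}{2}) = 1 - \sigma_p$. The only cosmetic difference is in how the constant $C$ is absorbed: the paper does it by converting to the extension form and noting $w_{CB(0,R)} \sim_C w_{B(0,R)}$, whereas you sketch a covering of the thicker cap by $O(1)$ thinner ones (or a change of scale via Corollary \ref{almostmult}); both are routine, though your covering sketch would, if written out, need a modulation argument or a bounded Fourier projection to a half-plane, so the paper's weight-comparison is the slightly cleaner mechanism.
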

\begin{proof}
Using Remark 5.2 of \cite{bd} (or alternatively Section 4.1 of \cite{thesis}) to convert to the extension operator formulation of decoupling and then using that
$w_{CB(0, R)} \sim_{C} w_{B(0, R)}$ shows that $$D_{p}(\delta, C, \mc{P}_1) \sim_{C} D_{p}(\delta, 1, \mc{P}_1).$$
Remark 5.2 of \cite{bd} and Theorem \ref{main} show that for $\delta$ sufficiently small and $4 < p < 6$,
\begin{align*}
D_{p}(\delta, 1, \mc{P}_1) \lsm \exp(O((\log \frac{1}{\delta})^{1 - \sigma_{p}}(\log\log\frac{1}{\delta})^{O(1)}))
\end{align*}
where all implied constants are absolute.
This completes the proof of Lemma \ref{cpar}.
\end{proof}

\begin{lemma}\label{apar}
We have $$D_{p}(\delta, C, \mc{P}_a) = D_{p}(\delta, C/a, \mc{P}_1).$$
\end{lemma}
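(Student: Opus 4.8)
## Proof Proposal

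The plan is to exploit a linear change of variables in the frequency (or spatial) domain that converts the curve $(t, at^2)$ into $(t, t^2)$, tracking how the thickened neighborhoods $\theta_{J, C\delta^2}(\mathcal{P}_a)$ transform. The key observation is that the map $(\xi_1, \xi_2) \mapsto (\xi_1, a^{-1}\xi_2)$ sends the region $\{(s, as^2 + t) : s \in [0,1], |t| \leq C\delta^2\}$ to $\{(s, s^2 + t') : s \in [0,1], |t'| \leq (C/a)\delta^2\}$, i.e. it sends $\theta_{[0,1], C\delta^2}(\mathcal{P}_a)$ exactly onto $\theta_{[0,1], (C/a)\delta^2}(\mathcal{P}_1)$, and likewise sends each piece $\theta_{J, C\delta^2}(\mathcal{P}_a)$ onto $\theta_{J, (C/a)\delta^2}(\mathcal{P}_1)$ for the same interval $J \in P_\delta([0,1])$.

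First I would let $f$ be an arbitrary function with Fourier support in $\theta_{[0,1], C\delta^2}(\mathcal{P}_a)$, and define $\tilde{f}$ by $\widehat{\tilde f}(\xi_1, \xi_2) = \widehat{f}(\xi_1, a\xi_2)$ (equivalently $\tilde f(x) = a\, f(x_1, a x_2)$ up to the harmless Jacobian constant). Then $\tilde f$ has Fourier support in $\theta_{[0,1], (C/a)\delta^2}(\mathcal{P}_1)$, and the Fourier projection commutes with the linear substitution: $\widetilde{f_{\theta_{J, C\delta^2}(\mathcal{P}_a)}} = \tilde{f}_{\theta_{J, (C/a)\delta^2}(\mathcal{P}_1)}$. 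Second, since the change of variables $x \mapsto (x_1, a x_2)$ is a diagonal linear map with constant Jacobian $a$, we have $\|\tilde f\|_p = a^{1 - 1/p}\|f\|_p$ and the same scalar factor appears identically on both sides of the decoupling inequality for each piece; hence the factor cancels and decoupling for $\tilde f$ at threshold $C/a$ is equivalent to decoupling for $f$ at threshold $C$. Applying the definition of $D_p(\delta, C/a, \mathcal{P}_1)$ to $\tilde f$ and translating back yields $D_p(\delta, C, \mathcal{P}_a) \leq D_p(\delta, C/a, \mathcal{P}_1)$; running the argument with the inverse substitution $(\xi_1,\xi_2)\mapsto(\xi_1, a^{-1}\xi_2)$ gives the reverse inequality, so equality holds.

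The only point requiring care — and the closest thing to an obstacle — is the bookkeeping on the thickening parameter: one must verify that the anisotropic dilation in the $\xi_2$ direction scales the vertical thickness $C\delta^2$ to exactly $(C/a)\delta^2$ while leaving the horizontal interval structure $P_\delta([0,1])$ untouched, so that the partition of $[0,1]$ indexing the decoupling pieces is literally the same on both sides. This is immediate from the definitions but deserves an explicit line. Everything else is a routine linear change of variables; no new analytic input is needed, and in particular no weight-function manipulation arises since the statement is phrased purely in the extension-operator-free (Fourier support) formulation.
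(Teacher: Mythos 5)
Your argument is essentially the paper's: both exploit the diagonal frequency dilation $(\xi_1,\xi_2)\mapsto(\xi_1,\xi_2/a)$, which carries $\theta_{J,C\delta^2}(\mathcal{P}_a)$ onto $\theta_{J,(C/a)\delta^2}(\mathcal{P}_1)$ piece by piece while the constant Jacobian factor cancels across the two sides of the decoupling inequality, and the reverse dilation gives the opposite inequality. One minor bookkeeping slip: with your Fourier relation $\widehat{\tilde f}(\xi_1,\xi_2)=\widehat{f}(\xi_1,a\xi_2)$ the spatial formula is $\tilde f(x)=a^{-1}f(x_1,x_2/a)$ and $\|\tilde f\|_p=a^{1/p-1}\|f\|_p$ rather than $a^{1-1/p}\|f\|_p$; as you note, this constant appears identically on both sides of the inequality and cancels, so the conclusion is unaffected.
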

\begin{proof}
It suffices to show that for any $r > 0$ that
$$D_{p}(\delta, C, \mc{P}_a) \leq D_{p}(\delta, Cr, P_{ar}).$$
Suppose $f$ has Fourier support in $\ta_{[0, 1], C\delta^2}(\mc{P}_a)$. A change of variables
gives $$\nms{f}_p = r^{1/p - 1}\nms{(f_r)_{\ta_{[0, 1], Cr\delta^2}(\mc{P}_{ra})}}_{p}$$
where $f_{r}(x_1, x_2) = rf(x_1, rx_2)$ since $$f(x) = \frac{1}{r}(f_r)_{\ta_{[0, 1], Cr\delta^2}(\mc{P}_{ra})}(x_1, \frac{x_2}{r}).$$
Now apply the definition of $D_{p}(\delta, Cr, \mc{P}_{ar})$ and undo the change of variables.
This completes the proof of Lemma \ref{apar}.
\end{proof}

Combining Lemmas \ref{cpar} and \ref{apar} gives the following corollary
which gives the decoupling constant for the curve $(t, at^2)$ where $a$ lies in a bounded interval
away from 0.
\begin{cor}\label{acor}
For $a, C \in [1/100, 100]$, $\delta \in \N^{-1}$ sufficiently small, and $4 < p < 6$,
\begin{align*}
D_{p}(\delta, C, \mc{P}_a) \leq \exp(O((\log \frac{1}{\delta})^{1 - \sigma_{p}}(\log\log\frac{1}{\delta})^{O(1)}))
\end{align*}
where $\sigma_p$ is as in Lemma \ref{cpar} and the implied constants are absolute.
\end{cor}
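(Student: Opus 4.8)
The plan is to deduce Corollary~\ref{acor} directly by chaining together the two preceding lemmas, with essentially no new analysis required. First I would observe that for $a, C \in [1/100, 100]$ we have $C/a \in [1/100^2, 100^2]$, so the hypothesis of Lemma~\ref{cpar} is satisfied with $C$ replaced by $C/a$. Then Lemma~\ref{apar} gives the exact identity
\begin{align*}
D_{p}(\delta, C, \mc{P}_a) = D_{p}(\delta, C/a, \mc{P}_1),
\end{align*}
and Lemma~\ref{cpar} (applied with the constant $C/a \in [1/100^2, 100^2]$) bounds the right-hand side by $\exp(O((\log\frac{1}{\delta})^{1 - \sigma_p}(\log\log\frac{1}{\delta})^{O(1)}))$ for all $\delta \in \N^{-1}$ sufficiently small, with absolute implied constants. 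Composing these two facts yields the claimed estimate.

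The one point that needs a word of care is the range of $p$: Lemma~\ref{cpar} is stated for $4 < p < 6$ while Corollary~\ref{acor} claims $4 \leq p < 6$. For the endpoint $p = 4$ I would note that $D_4(\delta) \lsm 1$ (as recalled in the introduction, via \cite[Section~4.3]{thesis}), so the extension-operator form of the $p=4$ decoupling constant for $(t, at^2)$ with bounded thickening constant is likewise $O(1)$ by the same change-of-variables argument used in Lemmas~\ref{cpar} and \ref{apar}; in particular the bound $\exp(O((\log\frac1\delta)^{1-\sigma_p}(\log\log\frac1\delta)^{O(1)}))$ holds trivially at $p = 4$ as well. Alternatively one simply reads the statement as being about $4 < p < 6$ together with the already-known $p = 4$ case.

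I do not anticipate a genuine obstacle here: the corollary is purely bookkeeping, combining an exact rescaling identity (Lemma~\ref{apar}) with a quantitative bound whose hypotheses are robust under the rescaling (Lemma~\ref{cpar}), and tracking that the implied constants remain absolute since $a$ and $C$ range over a fixed compact interval bounded away from $0$. The only mild subtlety is ensuring $C/a$ stays in $[1/100^2, 100^2]$, which is immediate from $a, C \in [1/100, 100]$, and observing that the factor $\sim_C$ losses in Lemma~\ref{cpar} are absorbed into the $O(\cdot)$ since $C/a$ is bounded.
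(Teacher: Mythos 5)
Your proposal is correct and matches the paper's approach: the paper does not give an explicit proof but simply states that Corollary~\ref{acor} follows by ``combining Lemmas~\ref{cpar} and~\ref{apar},'' which is precisely the chain you describe (apply Lemma~\ref{apar} to reduce to $\mc{P}_1$ with constant $C/a \in [1/100^2, 100^2]$, then invoke Lemma~\ref{cpar}). Your observation about the $p = 4$ endpoint mismatch between the two statements is a reasonable point of care, and your resolution (the $p=4$ case follows from the known $D_4(\delta) \lsm 1$ bound, or one reads the corollary as covering $4 < p < 6$) is sound.
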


\subsection{Explicit decoupling for $(t, h(t))$}\label{tht}
\subsubsection{Setting up the iteration}
For $r \in \N^{-1}$, let $$\mathfrak{D}_{p}(r, a) := D_{p}(r, 2, \mc{P}_a)$$ and  $$\mf{D}_{p}(r) := \sup_{1/4 \leq a \leq 1}\mf{D}_{p}(r, a).$$
Given $\tau^{1/2} \in \N^{-1}$, observe that by rescaling the interval $[0, \tau]$ to $[0, 1]$,
\begin{align*}
\nms{f_{\ta_{[0, \tau], 2\tau^3}(\mc{P}_a)}}_{p} \leq \mf{D}_{p}(\tau^{1/2}, a)(\sum_{J \in P_{\tau^{3/2}}([0, \tau])}\nms{f_{\ta_{J, 2\tau^3}(\mc{P}_a)}}_{p}^{2})^{1/2}
\end{align*}
for all $f$ with Fourier support in $\ta_{[0, \tau], 2\tau^3}(\mc{P}_a)$.

\begin{lemma}\label{approx}
Let $1/4 \leq a \leq 1$ and $\tau^{1/2} \in \N^{-1}$.
For all functions $h$ satisfying $|h(s) - \mc{P}_{a}(s)| \leq \tau^{3}$ for $s \in [0, \tau]$, we have
\begin{align*}
\nms{f_{\ta_{[0, \tau], \tau^3}(h)}}_{p} \leq \mf{D}_{p}(\tau^{1/2}, a)(\sum_{J \in P_{\tau^{3/2}}([0, \tau])}\nms{f_{\ta_{J, \tau^3}(h)}}_{p}^{2})^{1/2}
\end{align*}
for all $f$ with Fourier support in $\ta_{[0, \tau], \tau^3}(h)$.
\end{lemma}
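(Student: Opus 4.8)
The plan is to deduce this directly from the rescaled parabola decoupling inequality displayed immediately before the statement, using only the pointwise closeness of $h$ to $\mc{P}_a$ on $[0,\tau]$; no smoothness or derivative information about $h$ is needed. The one substantive observation is that the $\tau^3$-vertical neighbourhood of the graph of $h$ sits inside the $2\tau^3$-vertical neighbourhood of the arc of $\mc{P}_a$, and that on the relevant pieces the two families of Fourier restrictions of $f$ literally coincide.

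First I would record the elementary set inclusions. If $(s,\xi) \in \ta_{[0,\tau],\tau^3}(h)$, then $s \in [0,\tau]$ and $|\xi - h(s)| \leq \tau^3$, so by hypothesis $|\xi - \mc{P}_a(s)| \leq |\xi - h(s)| + |h(s) - \mc{P}_a(s)| \leq 2\tau^3$, i.e. $(s,\xi) \in \ta_{[0,\tau],2\tau^3}(\mc{P}_a)$; the same computation with any $J \in P_{\tau^{3/2}}([0,\tau])$ in place of $[0,\tau]$ gives $\ta_{J,\tau^3}(h) \subset \ta_{J,2\tau^3}(\mc{P}_a)$. Conversely, a point of $\ta_{[0,\tau],\tau^3}(h) \cap \ta_{J,2\tau^3}(\mc{P}_a)$ satisfies $s \in J$ (from the second set) and $|\xi - h(s)| \leq \tau^3$ (from the first), hence lies in $\ta_{J,\tau^3}(h)$. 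Combining the two directions yields the identity
\[
\ta_{[0,\tau],\tau^3}(h) \cap \ta_{J,2\tau^3}(\mc{P}_a) = \ta_{J,\tau^3}(h).
\]

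Now let $f$ have Fourier support in $\ta_{[0,\tau],\tau^3}(h)$. By the first inclusion it also has Fourier support in $\ta_{[0,\tau],2\tau^3}(\mc{P}_a)$, so $f = f_{\ta_{[0,\tau],2\tau^3}(\mc{P}_a)}$ and the rescaled decoupling inequality for $\mc{P}_a$ preceding the lemma applies with constant $\mf{D}_p(\tau^{1/2},a)$. On its right-hand side, since $\supp\wh{f} \subset \ta_{[0,\tau],\tau^3}(h)$ the restriction of $\wh{f}$ to $\ta_{J,2\tau^3}(\mc{P}_a)$ equals its restriction to $\ta_{[0,\tau],\tau^3}(h) \cap \ta_{J,2\tau^3}(\mc{P}_a) = \ta_{J,\tau^3}(h)$, so $f_{\ta_{J,2\tau^3}(\mc{P}_a)} = f_{\ta_{J,\tau^3}(h)}$ for every $J$. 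Substituting both identities into the displayed estimate gives exactly the claimed inequality.

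There is essentially no obstacle here beyond bookkeeping; the only point to double-check is that $\tau^{-1/2} \in \N$ (which is part of the hypothesis $\tau^{1/2} \in \N^{-1}$), so that $P_{\tau^{3/2}}([0,\tau])$ is a genuine partition and the rescaled parabola decoupling inequality — obtained by rescaling $[0,\tau]$ to $[0,1]$ in the definition of $\mf{D}_p(\tau^{1/2},a) = D_p(\tau^{1/2},2,\mc{P}_a)$, noting that the $2\tau^3$-thickening rescales to a $2\tau = 2(\tau^{1/2})^2$-thickening — is available verbatim. I would also emphasize in the writeup that only the $C^0$ bound $|h - \mc{P}_a| \leq \tau^3$ on $[0,\tau]$ is used, which is precisely what makes this lemma the right tool for replacing a general graph $(t,h(t))$ by short parabolic arcs in the Pramanik--Seeger iteration.
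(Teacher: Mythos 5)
Your proof is correct and follows the same route as the paper. The paper's proof simply asserts that it suffices to establish the inclusion $\ta_{[0, \tau], \tau^3}(h) \subset \ta_{[0, \tau], 2\tau^{3}}(\mc{P}_{a})$, while you additionally make explicit the intersection identity $\ta_{[0,\tau],\tau^3}(h) \cap \ta_{J,2\tau^3}(\mc{P}_a) = \ta_{J,\tau^3}(h)$ that justifies replacing $f_{\ta_{J,2\tau^3}(\mc{P}_a)}$ by $f_{\ta_{J,\tau^3}(h)}$ on the right-hand side, a bookkeeping step the paper leaves implicit.
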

\begin{proof}
It suffices to show that $\ta_{[0, \tau], \tau^3}(h) \subset \ta_{[0, \tau], 2\tau^{3}}(\mc{P}_{a})$.
This is equivalent to showing that $(s, h(s) + t) \in \ta_{[0, \tau], 2\tau^3}(\mc{P}_a)$ for $s \in [0, \tau]$
and $|t| \leq \tau^3$. It suffices to show that for each $s \in [0, \tau]$ we have $|h(s) + t - \mc{P}_{a}(s)| \leq 2\tau^3$ for all $|t| \leq \tau^3$. But this is immediate
from our hypothesis. This completes the proof of Lemma \ref{approx}.
\end{proof}

\begin{lemma}\label{shift}
Let $\tau^{1/2} \in \N^{-1}$ and $[\ell, \ell + \tau] \subset [0, 1]$. Suppose $h \in C^{3}([\ell, \ell + \tau])$ is such that
$1/2 \leq h''(s) \leq 2$ and $|h'''(s)| \leq 2$ for $s \in [\ell, \ell + \tau]$. Then
\begin{align*}
\nms{f_{\ta_{[\ell, \ell + \tau], \tau^3}(h)}}_{p} \leq \mf{D}_{p}(\tau^{1/2})(\sum_{J \in P_{\tau^{3/2}}([\ell, \ell + \tau])}\nms{f_{\ta_{J, \tau^3}(h)}}_{p}^{2})^{1/2}
\end{align*}
for all $f$ with Fourier support in $\ta_{[\ell, \ell + \tau], \tau^3}(h)$.
\end{lemma}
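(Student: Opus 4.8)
The plan is to reduce Lemma~\ref{shift} to Lemma~\ref{approx} by an affine change of variables in frequency together with a Taylor expansion of $h$.

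First I would normalize the curve. Set $\wt{h}(s) := h(s + \ell) - h(\ell) - h'(\ell)s$ for $s \in [0, \tau]$, so that $\wt{h}(0) = \wt{h}'(0) = 0$, $\wt{h}''(s) = h''(s + \ell) \in [1/2, 2]$, and $|\wt{h}'''(s)| = |h'''(s + \ell)| \leq 2$. The affine map $L(\xi_1, \xi_2) := (\xi_1 - \ell,\, \xi_2 - h(\ell) - h'(\ell)(\xi_1 - \ell))$ has linear part of determinant $1$, and it carries $\ta_{[\ell, \ell + \tau], \tau^3}(h)$ onto $\ta_{[0, \tau], \tau^3}(\wt{h})$, each $\ta_{J, \tau^3}(h)$ onto $\ta_{J - \ell, \tau^3}(\wt{h})$, and the partition $P_{\tau^{3/2}}([\ell, \ell + \tau])$ onto $P_{\tau^{3/2}}([0, \tau])$. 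Dualizing, $L$ corresponds to a change of variables in physical space with unit Jacobian composed with a modulation, so if $f$ has Fourier support in $\ta_{[\ell, \ell + \tau], \tau^3}(h)$ then the resulting function $g$ has Fourier support in $\ta_{[0, \tau], \tau^3}(\wt{h})$ with $\nms{g}_p = \nms{f}_p$ and $\nms{g_{\ta_{J - \ell, \tau^3}(\wt{h})}}_p = \nms{f_{\ta_{J, \tau^3}(h)}}_p$ for every $J$. Hence it is enough to establish the decoupling inequality for $g$ and $\wt{h}$ over $[0, \tau]$.

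Next I would take $a := \tfrac{1}{2}h''(\ell)$, which lies in $[1/4, 1]$ by hypothesis, and check the hypothesis of Lemma~\ref{approx}: by Taylor's theorem with Lagrange remainder, for each $s \in [0, \tau]$ there is $\xi \in (0, s)$ with $\wt{h}(s) - \mc{P}_a(s) = \wt{h}(s) - \tfrac{1}{2}\wt{h}''(0)s^2 = \tfrac{1}{6}\wt{h}'''(\xi)s^3$, whence $|\wt{h}(s) - \mc{P}_a(s)| \leq \tfrac{1}{3}s^3 \leq \tau^3$. Then Lemma~\ref{approx} applied to $\wt{h}$ with this value of $a$ gives the decoupling inequality for $g$ with constant $\mf{D}_p(\tau^{1/2}, a)$, and since $a \in [1/4, 1]$ this is at most $\mf{D}_p(\tau^{1/2})$. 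Transferring back through $L$ finishes the proof.

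The only mildly delicate point is the bookkeeping for the affine substitution: one must confirm that its linear part has determinant exactly $1$, so that no power of $\tau$ or of $h'(\ell)$ leaks into the constant, and that on the first coordinate it acts simply as the translation $\xi_1 \mapsto \xi_1 - \ell$, so that it sends $P_{\tau^{3/2}}([\ell, \ell + \tau])$ exactly onto $P_{\tau^{3/2}}([0, \tau])$. Everything else — the Taylor estimate above and the monotonicity $\mf{D}_p(\tau^{1/2}, a) \leq \mf{D}_p(\tau^{1/2})$ — is immediate from the definitions.
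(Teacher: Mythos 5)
Your proposal is correct and follows essentially the same route as the paper: shift to $\wt{h}(s) = h(\ell+s) - h(\ell) - h'(\ell)s$ via a unit-determinant shear (the paper writes it as translation by $(\ell, h(\ell))$ followed by $T = \left(\begin{smallmatrix} 1 & 0 \\ -h'(\ell) & 1\end{smallmatrix}\right)$), use Taylor with the third-derivative bound to verify $|\wt{h}(s) - \mc{P}_a(s)| \leq \tau^3$ for $a = h''(\ell)/2 \in [1/4,1]$, invoke Lemma~\ref{approx}, and take the supremum over $a$.
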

\begin{proof}
Fix arbitrary $h$ satisfying the hypotheses and let $f$ have Fourier support in $\ta_{[\ell, \ell + \tau], \tau^3}(h)$.
Define $T = (\begin{smallmatrix} 1 & 0 \\ -h'(\ell) & 1\end{smallmatrix})$. Then
\begin{align*}
T(\ta_{[\ell, \ell + \tau], \tau^3}(h) - (\ell, h(\ell))) = \ta_{[0, \tau], \tau^3}(\wt{h})
\end{align*}
where $$\wt{h}(s) = h(\ell + s) - h(\ell) - h'(\ell)s.$$
Note that $\wt{h}(0) = \wt{h}'(0) = 0$ and hence by Taylor's theorem (and that $\wt{h} \in C^{3}([0, \tau])$),
\begin{align*}
|\wt{h}(s) - \mc{P}_{\wt{h}''(0)/2}(s)| \leq \tau^3/3.
\end{align*}

We have
\begin{align*}
|f(x)| &= |\int_{\ta_{[\ell, \ell + \tau], \tau^3}(h)}\wh{f}(\xi)e(x \cdot \xi)\, d\xi|\\
&= |\int_{\ta_{[0, \tau], \tau^3}(\wt{h})}\wh{f}(T^{-1}\eta + (\ell, h(\ell)))e(\eta \cdot T^{-t}x)\, d\eta|.
\end{align*}
Let $g(y) := f(T^{t}y)e(T(\ell, h(\ell))  \cdot y)$. Then
$\wh{f}(T^{-1}\eta + (\ell, h(\ell))) = \wh{g}(\eta)$ and $g$ has Fourier support in $\ta_{[0,\tau], \tau^3}(\wt{h})$.
Applying Lemma \ref{approx}, we then have
\begin{align*}
\nms{f}_{p} = \nms{g}_{p} \leq \mf{D}_{p}(\tau, \frac{\wt{h}''(0)}{2})(\sum_{J \in P_{\tau^{3/2}}([0, \tau])}\nms{g_{\ta_{J, \tau^3}(\wt{h})}}_{p}^{2})^{1/2}.
\end{align*}
The same changes of variables as above shows that as $J$ runs through $P_{\tau^{3/2}}([0, \tau])$ the expression $\nms{g_{\ta_{J, \tau^{3}}(\wt{h})}}_{p}$
is equal to $\nms{f_{\ta_{J', \tau^3}(h)}}_{p}$ as $J'$ runs through $P_{\tau^{3/2}}([\ell, \ell + \tau])$.
Finally, observing that $\wt{h}''(0)/2 = h''(\ell)/2$  and using that $1/2 \leq h''(s) \leq 2$ for $s \in [\ell, \ell + \tau]$
completes the proof of Lemma \ref{shift}.
\end{proof}

\subsubsection{Selection of parameters}\label{choice}
Throughout the remainder of this appendix, fix $\delta \in (0, 1)$ sufficiently small. Note that unlike previously, we impose no integrality
conditions whatsoever on $\delta$.
That is, in this appendix, we do not assume that $\delta \in \N^{-1}$.
Let $C_0 < 1/100$ be a sufficiently small absolute constant.
Choose $N \in \N$ such that
\begin{align}\label{ndeltchoice}
C_{0}^{3 \cdot 3^{N}}\leq \delta \leq C_{0}^{2 \cdot 3^{N}}.
\end{align}
Next choose $\tau_{0} := C_{0}^{2\cdot 2^{N}}.$
For simplicity of notation, let $$\tau_{j} := \tau_{0}^{(3/2)^{j}}$$
and observe that $\tau_{j}^{1/2} \in \N^{-1}$ for $j = 0, 1, \ldots, N$.
From our choice of $\tau_0$, $C_{0}^{3 \cdot 3^N} = \tau_{0}^{(\frac{3}{2})^{N + 1}}$
and $C_{0}^{2 \cdot 3^N} = \tau_{0}^{(\frac{3}{2})^{N}}$ and hence
$$\tau_{N + 1} = \tau_{0}^{(\frac{3}{2})^{N + 1}} \leq \delta \leq \tau_{0}^{(\frac{3}{2})^{N}} = \tau_N.$$
Our choice of $N$ in \eqref{ndeltchoice} gives that $N \lsm \log\log\frac{1}{\delta}$,
\begin{align*}
\tau_{0}^{-1} = \exp(O((\log \frac{1}{\delta})^{\log_{3}2})),
\end{align*}
and $\delta^{-1}\lceil \delta^{-2}\rceil \tau_{0}^{-2} = O_{\vep}(\delta^{-3 - \vep})$.

\subsubsection{The iteration}
Let $\mc{C}$ be the class of functions in $C^{3}([0, 1])$ such that
\begin{itemize}
\item $h(0) = h'(0) = h'''(0) = 0$
\item $1/2 \leq h''(s) \leq 2$ for $s \in [0, 1]$.
\end{itemize}
Note that for $h \in \mc{C}$, from the fundamental theorem of calculus, $|h'''(s)| \leq 2$
for all $s \in [0, 1]$.
We now apply an iterative argument to upgrade the information in Corollary \ref{acor} to give us
information about $D_{p}(\delta, 1, h)$ for $h \in \mc{C}$.
For $r \in \N^{-1}$, we define
\begin{align*}
D_{p}(r, \mc{C}) := \sup_{h \in \mc{C}}D_{p}(r, 1, h).
\end{align*}

\begin{lemma}\label{dbd}
With our choice of $\tau_0$ and $N$ as in Section \ref{choice},
\begin{align*}
D_{p}(\tau_{N + 1}, \mc{C}) \leq \tau_{0}^{-1/2}\prod_{j = 0}^{N}\mf{D}_{p}(\tau_{j}^{1/2}).
\end{align*}
\end{lemma}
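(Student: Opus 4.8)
The plan is to prove the asserted inequality with $D_p(\tau_{N+1},\mc{C})$ replaced by $D_p(\tau_{N+1},1,h)$ for an arbitrary fixed $h\in\mc{C}$ and then take the supremum over $h$. The mechanism is to iterate Lemma \ref{shift} down the chain of scales $\tau_0>\tau_1>\cdots>\tau_{N+1}$, preceded by a single trivial Cauchy--Schwarz step that produces the prefactor $\tau_0^{-1/2}$. So I would fix $f$ with Fourier support in $\ta_{[0,1],\tau_{N+1}^2}(h)$ (this being the thickening in the definition of $D_p(\tau_{N+1},1,h)$), and for an interval $I\subseteq[0,1]$ write $f_I$ for the Fourier restriction of $f$ to $\ta_{I,\tau_{N+1}^2}(h)$, so that $f=f_{[0,1]}$ and the caps in the definition of $D_p(\tau_{N+1},1,h)$ are the $f_J$ with $J\in P_{\tau_{N+1}}([0,1])$.

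The one point requiring genuine care is the bookkeeping of the vertical thickenings. Since $\tau_{j+1}=\tau_j^{3/2}$ one has the identity $\tau_j^3=\tau_{j+1}^2$, and as $(\tau_j)_j$ is decreasing this gives $\tau_{N+1}^2\le\tau_{j+1}^2=\tau_j^3$ for $0\le j\le N$. With this in hand, for any $I$ of length $\tau_j$ with $j\le N$ the function $f_I$ has Fourier support inside $\ta_{I,\tau_{N+1}^2}(h)\subseteq\ta_{I,\tau_j^3}(h)$, so Lemma \ref{shift} (whose hypotheses on $h$ hold on every subinterval of $[0,1]$ by the defining properties of $\mc{C}$, and for which $\tau_j^{1/2}\in\N^{-1}$ by the choices of Section \ref{choice}) applies to $f_I$ and yields
\begin{align*}
\nms{f_I}_p\le\mf{D}_p(\tau_j^{1/2})\Big(\sum_{J\in P_{\tau_{j+1}}(I)}\nms{f_J}_p^2\Big)^{1/2};
\end{align*}
here $\tau_{N+1}^2\le\tau_j^3$ is used once more to see that restricting the caps $\ta_{J,\tau_j^3}(h)$ furnished by Lemma \ref{shift} back to the Fourier support of $f_I$ returns exactly $f_J$, so that the caps do not thicken as the iteration proceeds.

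It then remains to telescope. First write $f=\sum_{I_0\in P_{\tau_0}([0,1])}f_{I_0}$ and apply the triangle inequality followed by Cauchy--Schwarz over the $\tau_0^{-1}$ many $I_0$, obtaining $\nms{f}_p\le\tau_0^{-1/2}(\sum_{I_0}\nms{f_{I_0}}_p^2)^{1/2}$; then apply the displayed single-scale inequality for $j=0,1,\dots,N$ in turn, at each stage passing from an $l^2$-sum over $P_{\tau_j}([0,1])$ to one over $P_{\tau_{j+1}}([0,1])$ via $\bigcup_{I\in P_{\tau_j}([0,1])}P_{\tau_{j+1}}(I)=P_{\tau_{j+1}}([0,1])$ at the cost of a factor $\mf{D}_p(\tau_j^{1/2})$. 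This produces $\nms{f}_p\le\tau_0^{-1/2}\prod_{j=0}^N\mf{D}_p(\tau_j^{1/2})(\sum_{J\in P_{\tau_{N+1}}([0,1])}\nms{f_J}_p^2)^{1/2}$, and since the $f_J$ are the caps in the definition of $D_p(\tau_{N+1},1,h)$ and $h\in\mc{C}$ was arbitrary, taking suprema finishes the argument. The only real obstacle is the thickening compatibility isolated in the second paragraph — one must ensure that repeatedly invoking Lemma \ref{shift} never forces the caps to grow — and this is precisely what the identities $\tau_j^3=\tau_{j+1}^2$ together with the monotonicity of $(\tau_j)$ guarantee; the trivial first step and the telescoping of the $l^2$-norms are then routine.
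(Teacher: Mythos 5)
Your proof is correct and takes essentially the same route as the paper: a trivial $\ell^2$ decomposition to scale $\tau_0$ contributing the factor $\tau_0^{-1/2}$, followed by iterating Lemma \ref{shift} down the scales $\tau_0,\tau_1,\dots,\tau_N$. The paper tracks the vertical thickening step by step while you fix $f$ at the target thickening $\tau_{N+1}^2$ from the outset and use the containment $\tau_{N+1}^2\le\tau_j^3$ to justify each invocation of Lemma \ref{shift}; the two bookkeeping schemes are equivalent.
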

\begin{proof}
First we decouple trivially to scale $\tau_0$ via the triangle inequality. Let $f$ be
a function with Fourier support in $\ta_{[0, 1], \tau_{0}^{2}}(h)$. Then since $\tau_0 \in \N^{-1}$, the triangle inequality gives
\begin{align}\label{step1}
\nms{f_{\ta_{[0, 1], \tau_{0}^{2}}(h)}}_{p} \leq \tau_{0}^{-1/2}(\sum_{J \in P_{\tau_{0}}([0, 1])}\nms{f_{\ta_{J, \tau_{0}^{2}}(h)}}_{p}^{2})^{1/2}.
\end{align}

By our choice of $\tau_0$, $\tau_{0}^{1/2} \in \N^{-1}$. Our assumptions on $h$ combined with Lemma \ref{shift} show that for each $J \in P_{\tau_0}([0, 1])$,
\begin{align}\label{step2}
\nms{f_{\ta_{J, \tau_{0}^{3}}(h)}}_{p} \leq \mf{D}_{p}(\tau_{0}^{1/2})(\sum_{J' \in P_{\tau_{0}^{3/2}}(J)}\nms{f_{\ta_{J', \tau_{0}^{3}}(h)}}_{p}^{2})^{1/2}
\end{align}
for all $f$ with Fourier support in $\ta_{J, \tau_{0}^{3}}(h)$.
Combining \eqref{step1} with \eqref{step2} we have shown that if $f$ is a function with Fourier support in $\ta_{[0, 1], \tau_{0}^{3}}(h)$,
then
\begin{align*}
\nms{f_{\ta_{[0, 1], \tau_{0}^{3}}(h)}}_{p} \leq \tau_{0}^{-1/2}\mf{D}_{p}(\tau_{0}^{1/2})(\sum_{J \in P_{\tau_{0}^{3/2}}([0, 1])}\nms{f_{\ta_{J, \tau_{0}^{3}}(h)}}_{p}^{2})^{1/2}.
\end{align*}

By our choice of $\tau_0$, $(\tau_{0}^{3/2})^{1/2} = \tau_{1}^{1/2} \in \N^{-1}$. Lemma \ref{shift} then shows that for each $J \in P_{\tau_{0}^{3/2}}([0, 1])$,
\begin{align*}
\nms{f_{\ta_{J, (\tau_{0}^{3/2})^{3}}(h)}}_{p} \leq \mf{D}_{p}((\tau_{0}^{3/2})^{1/2})(\sum_{J' \in P_{\tau_{0}^{(3/2)^2}}(J)}\nms{f_{\ta_{J', (\tau_{0}^{3/2})^{3}}(h)}}_{p}^{2})^{1/2}
\end{align*}
for all $f$ with Fourier support in $\ta_{J, (\tau_{0}^{3/2})^{3}}(h)$. This shows that if $f$ is a function with Fourier support in
$\ta_{[0, 1], (\tau_{0}^{3/2})^{3}}(h)$, then
\begin{align*}
\nms{f_{\ta_{[0, 1], (\tau_{0}^{3/2})^{3}}(h)}}_{p} \leq \tau_{0}^{-1/2}\mf{D}_{p}(\tau_{0}^{1/2})\mf{D}_{p}((\tau_{0}^{3/2})^{1/2})(\sum_{J \in P_{\tau_{0}^{(3/2)^2}}([0, 1])}\nms{f_{\ta_{J, (\tau_{0}^{3/2})^{3}}(h)}}_{p}^{2})^{1/2}.
\end{align*}

By our choice of $\tau_{0}$, $\tau_{0}^{\frac{1}{2} \cdot (\frac{3}{2})^{j}} = \tau_{j}^{1/2} \in \N^{-1}$ for $j = 0, \ldots, N$.
Applying the above process a total of $N + 1$ times shows that if $f$ is a function with Fourier support in $\ta_{[0, 1], \tau_{0}^{3(3/2)^N}}(h)$, then
\begin{align*}
\nms{f_{\ta_{[0, 1], \tau_{0}^{3(3/2)^N}}(h)}}_{p} \leq \tau_{0}^{-1/2}\prod_{j = 0}^{N}\mf{D}_{p}(\tau_{0}^{\frac{1}{2} \cdot (\frac{3}{2})^{j}})(\sum_{J \in P_{\tau_{0}^{(3/2)^{N + 1}}}([0, 1])}\nms{f_{\ta_{J, \tau_{0}^{3(3/2)^N}}(h)}}_{p}^{2})^{1/2}.
\end{align*}
Applying the definitions of $D_{p}(\tau_{0}^{(3/2)^{N + 1}}, \mc{C})$ and $\tau_j$, then completes the proof of Lemma \ref{dbd}.
\end{proof}

Combining this lemma with Corollary \ref{acor} immediately gives the following.
\begin{cor}\label{dbdcor}
With our choice of $\delta, \tau_0$, and $N$ as in Section \ref{choice} and $4 < p < 6$, we have
$$D_{p}(\tau_{N + 1}, \mc{C}) \leq \exp(O((\log\frac{1}{\delta})^{1 - \sigma_p}(\log\log\frac{1}{\delta})^{O(1)}))$$
where the implied constant is absolute and $\sigma_p$ is defined as in Lemma \ref{cpar}.
\end{cor}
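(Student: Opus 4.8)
The plan is to combine Lemma \ref{dbd} with Corollary \ref{acor} and bookkeep the parameter choices from Section \ref{choice}. By Lemma \ref{dbd} we have
\begin{align*}
D_{p}(\tau_{N+1}, \mc{C}) \leq \tau_{0}^{-1/2}\prod_{j=0}^{N}\mf{D}_{p}(\tau_{j}^{1/2}),
\end{align*}
so the task reduces to estimating each factor $\mf{D}_{p}(\tau_{j}^{1/2}) = \sup_{1/4 \leq a \leq 1}D_{p}(\tau_{j}^{1/2}, 2, \mc{P}_{a})$ and the product of them.

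First I would apply Corollary \ref{acor}: for $a \in [1/4, 1] \subset [1/100, 100]$ and $C = 2 \in [1/100, 100]$, and $\tau_{j}^{1/2} \in \N^{-1}$ sufficiently small, we have
\begin{align*}
\mf{D}_{p}(\tau_{j}^{1/2}) \leq \exp\big(O((\log \tfrac{1}{\tau_{j}^{1/2}})^{1 - \sigma_p}(\log\log \tfrac{1}{\tau_{j}^{1/2}})^{O(1)})\big)
\end{align*}
with absolute implied constants. Next I would use the explicit relations from Section \ref{choice}: $\tau_{j} = \tau_{0}^{(3/2)^j}$ and $\tau_{0}^{-1} = \exp(O((\log\frac{1}{\delta})^{\log_{3}2}))$, so $\log\frac{1}{\tau_{j}} = (3/2)^{j}\log\frac{1}{\tau_0} \leq (3/2)^{N}\log\frac{1}{\tau_0} \lsm \log\frac{1}{\delta}$ since $\tau_{N} \geq \delta$ forces $(3/2)^{N}\log\frac{1}{\tau_0} \leq \log\frac{1}{\delta}$. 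Hence each factor is bounded by $\exp(O((\log\frac{1}{\delta})^{1-\sigma_p}(\log\log\frac{1}{\delta})^{O(1)}))$, using also that $\log\log\frac{1}{\tau_j^{1/2}} \lsm \log\log\frac{1}{\delta}$.

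Then I would take the product over $j = 0, 1, \ldots, N$. Since there are $N + 1$ factors and $N \lsm \log\log\frac{1}{\delta}$ by \eqref{ndeltchoice}, the product is bounded by
\begin{align*}
\exp\big(O((N+1)(\log\tfrac{1}{\delta})^{1-\sigma_p}(\log\log\tfrac{1}{\delta})^{O(1)})\big) = \exp\big(O((\log\tfrac{1}{\delta})^{1-\sigma_p}(\log\log\tfrac{1}{\delta})^{O(1)})\big),
\end{align*}
where the extra factor of $N+1 \lsm \log\log\frac{1}{\delta}$ is absorbed into the $(\log\log\frac{1}{\delta})^{O(1)}$. Finally the factor $\tau_{0}^{-1/2} = \exp(O((\log\frac{1}{\delta})^{\log_{3}2}))$ is negligible compared to this bound since $\log_{3}2 < 1 < 1 - \sigma_p$ is false in general — rather $\log_3 2 \approx 0.63$ and $1 - \sigma_p$ ranges over $(3/4, 1)$ for $p \in (4,6)$, so indeed $\log_3 2 < 1 - \sigma_p$ and $\tau_0^{-1/2}$ is absorbed. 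This yields
\begin{align*}
D_{p}(\tau_{N+1}, \mc{C}) \leq \exp\big(O((\log\tfrac{1}{\delta})^{1-\sigma_p}(\log\log\tfrac{1}{\delta})^{O(1)})\big),
\end{align*}
which is the claim. The only delicate point — and the one I would double-check — is verifying that each $\tau_j^{1/2}$ is small enough for Corollary \ref{acor} to apply; this follows because $\tau_j \leq \tau_0 = C_0^{2 \cdot 2^N}$ with $C_0 < 1/100$ and $N$ large when $\delta$ is small, but one should confirm the "sufficiently small" threshold in Corollary \ref{acor} is an absolute constant so that it is cleared uniformly in $j$.
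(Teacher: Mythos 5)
Your proposal is correct and takes the same route as the paper, which simply invokes Lemma \ref{dbd} and Corollary \ref{acor} and leaves the bookkeeping implicit; you have filled in the routine verifications (uniform bound on $\log\tfrac{1}{\tau_j}$, absorption of the $N+1 \lsm \log\log\tfrac{1}{\delta}$ factor into the polylog, and that $\log_3 2 < 3/4 < 1-\sigma_p$ so the $\tau_0^{-1/2}$ prefactor is negligible) correctly.
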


\subsection{The exponential sum estimate}
We now prove \eqref{targetexpsum}.
Observe that
\begin{align*}
H(\xi) := \frac{1 - \sqrt{1 - \xi^2\tau_{0}^{2}}}{\tau_{0}^{2}}
\end{align*}
is such that $H(0) = H'(0) = H'''(0) = 0$ and for $\tau_0$ sufficiently small (which can be satisfied if $C_0$ is sufficiently small) $$\frac{1}{2} \leq H''(\xi) = \frac{1}{(1 - \xi^{2}\tau_{0}^{2})^{3/2}} \leq 2$$ for $\xi \in [0, 1]$.
Therefore since $\tau_{0}^{3(3/2)^{N}} = \tau_{N+ 1}^{2}$ and $H \in \mc{C}$, Corollary \ref{dbdcor} implies the following lemma.
\begin{lemma}\label{decoupling2}
For an interval $J$, define the extension operator
\begin{align*}
(\E'_{J}g)(x) := \int_{J}g(\xi)e(\xi x_1 + \frac{1 - \sqrt{1 - \xi^2\tau_{0}^{2}}}{\tau_{0}^{2}} x_2)\, d\xi.
\end{align*}
Then for $4 < p < 6$, we have
\begin{align*}
\nms{\E'_{[0, 1]}g}_{L^{p}(B)} \leq \exp(O((\log \frac{1}{\delta})^{1 - \sigma_p}(\log\log\frac{1}{\delta})^{O(1)}))(\sum_{J \in P_{\tau_{N + 1}}([0, 1])}\nms{\E'_{J}g}_{L^{p}(w_B)}^{2})^{1/2}
\end{align*}
for all $g: [0, 1] \rightarrow \C$ and all squares $B$ of side length $\tau_{N + 1}^{-2}$.
\end{lemma}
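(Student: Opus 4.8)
The plan is to recognize that the curve $\xi \mapsto (\xi, H(\xi))$ with $H(\xi) = (1 - \sqrt{1 - \xi^2 \tau_0^2})/\tau_0^2$ is, up to the scaling already baked into the definition of $\tau_0$, exactly the lower arc of a circle, and that $H$ has been arranged to land in the class $\mc{C}$ so that Corollary \ref{dbdcor} applies directly. So the first step is to verify $H \in \mc{C}$: compute $H(0) = 0$, $H'(\xi) = \xi/\sqrt{1 - \xi^2\tau_0^2}$ so $H'(0) = 0$, and $H''(\xi) = (1 - \xi^2\tau_0^2)^{-3/2}$, which gives $H''(0) = 1$ and $H'''(0) = 0$ (since $H''$ is even in $\xi$, its derivative vanishes at $0$). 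For $\xi \in [0,1]$ and $\tau_0$ small enough, $1 - \xi^2\tau_0^2$ is close to $1$, so $1/2 \leq H''(\xi) \leq 2$; this is where one uses that $C_0$ (hence $\tau_0$) is sufficiently small. Thus $H \in \mc{C}$.

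Next I would translate the extension-operator statement we want into the Fourier-support ($f_\theta$) language in which $D_p(r, \mc{C})$ is phrased. By the standard equivalence (Remark 5.2 of \cite{bd}, equivalently Section 4.1 of \cite{thesis}), the bound
\begin{align*}
\nms{f}_p \leq D_p(\tau_{N+1}, \mc{C})\Big(\sum_{J \in P_{\tau_{N+1}}([0,1])}\nms{f_{\ta_{J, \tau_{N+1}^2}(H)}}_p^2\Big)^{1/2}
\end{align*}
for $f$ with Fourier support in $\ta_{[0,1], \tau_{N+1}^2}(H)$ is equivalent, with absolute implied constants and $B$ of side length $\tau_{N+1}^{-2}$, to
\begin{align*}
\nms{\E'_{[0,1]}g}_{L^p(B)} \lsm D_p(\tau_{N+1}, \mc{C})\Big(\sum_{J \in P_{\tau_{N+1}}([0,1])}\nms{\E'_J g}_{L^p(w_B)}^2\Big)^{1/2}.
\end{align*}
Here one must check the scale bookkeeping: Section \ref{choice} arranges $\tau_0^{3(3/2)^N} = \tau_{N+1}^2$, and $\mc{C}$ is defined using caps of height $r^3$ at scale $r$; with $r = \tau_{N+1}$ the relevant cap thickness is $\tau_{N+1}^3$, which is $\leq \tau_{N+1}^2$, so the hypotheses of Corollary \ref{dbdcor} cover the Fourier support of $\E'_J g$.

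Finally, apply Corollary \ref{dbdcor}, which gives
\begin{align*}
D_p(\tau_{N+1}, \mc{C}) \leq \exp\big(O((\log\tfrac{1}{\delta})^{1 - \sigma_p}(\log\log\tfrac{1}{\delta})^{O(1)})\big)
\end{align*}
with an absolute implied constant, and combine with the equivalence above to conclude. The only genuinely delicate points — and the main places where one should be careful rather than routine — are (i) confirming the constant $C_0$ is chosen small enough that the $H''$ bounds hold uniformly on all of $[0,1]$, not just near $0$, and (ii) making sure that the passage between the extension formulation (with the weight $w_B$) and the Fourier-support formulation does not secretly introduce a $p$-dependent constant; this is exactly the point emphasized throughout the paper, and it holds because the equivalence in Remark 5.2 of \cite{bd} has absolute constants for $2 \leq p \leq 6$, and all the auxiliary weight manipulations (Lemma \ref{convint}, Proposition \ref{up}) have absolute constants as well. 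No new iteration or hard analysis is needed here; the lemma is essentially a repackaging of Corollary \ref{dbdcor}.
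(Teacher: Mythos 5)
Your proposal is correct and takes essentially the same route as the paper: verify $H \in \mc{C}$ (the paper also observes $H(0)=H'(0)=H'''(0)=0$, $H''(\xi)=(1-\xi^2\tau_0^2)^{-3/2}\in[1/2,2]$, and $\tau_0^{3(3/2)^N}=\tau_{N+1}^2$) and then apply Corollary \ref{dbdcor}, with the translation between the Fourier-support formulation and the extension-operator formulation handled by the Remark 5.2 of \cite{bd} equivalence. One small slip worth flagging: you write that $\mc{C}$ (rather, $D_p(r,\mc{C})$) ``is defined using caps of height $r^3$ at scale $r$''; in fact $D_p(\delta, 1, h)$ partitions $[0,1]$ into intervals of length $\delta$ with vertical cap thickness $\delta^2$, so at scale $r=\tau_{N+1}$ the thickness is exactly $\tau_{N+1}^2$, which is precisely what $\tau_0^{3(3/2)^N}=\tau_{N+1}^2$ arranges — the $\tau^3$ thickness over length-$\tau$ intervals appears only in the intermediate Lemmas \ref{approx} and \ref{shift}, where the partition is into intervals of length $\tau^{3/2}$, not $\tau$. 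This does not affect the conclusion since the Fourier support of $\E'_Jg$ lies on the curve and is contained in the cap in any case, but the bookkeeping sentence as written is inaccurate.
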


Rescaling $[0, 1]$ to $[0, \tau_0]$ and using parallel decoupling, Lemma \ref{decoupling2} implies the following.
\begin{lemma}\label{decoupling}
For an interval $J$, define the extension operator
$$(\E_{J}g)(x) := \int_{J}g(\xi)e(\xi x_1 + (1 - \sqrt{1 - \xi^2})x_2)\, d\xi.$$
Then for $4 < p < 6$, we have
\begin{align*}
\nms{\E_{[0, \tau_0]}g}_{L^{p}(B)} \leq \exp(O((\log\frac{1}{\delta})^{1 - \sigma_p}(\log\log\frac{1}{\delta})^{O(1)}))(\sum_{J \in P_{\tau_{N + 1}\tau_0}([0, \tau_0])}\nms{\E_{J}g}_{L^{p}(w_B)}^{2})^{1/2}
\end{align*}
for all $g: [0, \tau_0] \rightarrow \C$ and all squares $B$ of side length $(\tau_{N + 1}\tau_0)^{-2}$.
\end{lemma}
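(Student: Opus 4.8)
The plan is to follow the parabolic rescaling argument (Lemma \ref{parabolic_rescaling}; see also \cite[Proposition 7.1]{sg}), with the simplification that here the rescaling is exact. Writing $H(\eta) := \frac{1 - \sqrt{1 - \eta^2\tau_0^2}}{\tau_0^2}$ for the phase of $\E'$, one has the algebraic identity $1 - \sqrt{1 - \xi^2} = \tau_0^2 H(\xi/\tau_0)$. So, setting $A := \operatorname{diag}(\tau_0, \tau_0^2)$ and $\wt{g}(\eta) := g(\tau_0\eta)$, the substitution $\xi = \tau_0\eta$ in the definition of $\E$ gives the pointwise identity
\begin{align*}
(\E_J g)(x) = \tau_0\,(\E'_{\tau_0^{-1}J}\wt{g})(Ax) \qquad \text{for every } J \subset [0, \tau_0],
\end{align*}
and as $J$ ranges over $P_{\tau_0\tau_{N+1}}([0,\tau_0])$, the interval $\tau_0^{-1}J$ ranges over $P_{\tau_{N+1}}([0,1])$.

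First I would record that Lemma \ref{decoupling2} remains true, with the same bound on its constant, if $w_B$ is replaced by $\ow_B^{10}$: this follows from Corollary \ref{dbdcor} and Remark 5.2 of \cite{bd} exactly as Lemma \ref{decoupling2} itself does, since (as in Remark \ref{equiv}) the power of the weight only affects absolute constants. Next, the change of variables $y = Ax$ gives $\nms{\E_{[0,\tau_0]}g}_{L^p(B)}^p = \tau_0^{p-3}\nms{\E'_{[0,1]}\wt{g}}_{L^p(A(B))}^p$. Since $B$ has side length $(\tau_0\tau_{N+1})^{-2}$, the set $A(B)$ is a $\tau_0^{-1}\tau_{N+1}^{-2} \times \tau_{N+1}^{-2}$ rectangle, so I would cover it by a finitely overlapping family of $O(\tau_0^{-1})$ squares $\Box$ of side length $\tau_{N+1}^{-2}$ contained in $2A(B)$, apply the $\ow_\Box^{10}$-version of Lemma \ref{decoupling2} on each $\Box$, interchange the $\ell^p_\Box$ and $\ell^2_{J'}$ norms (Minkowski, using $p \ge 2$), and undo the change of variables. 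Everything then reduces to the weight bound
\begin{align*}
\sum_{\Box}\ow_{\Box}^{10}(Ax) \lsm \ow_B^{10}(x) \le w_B(x),
\end{align*}
after which the powers of $\tau_0$ cancel (the Jacobian $\tau_0^3$ and the factor $\tau_0^{-p}$ from $|(\E'_{\tau_0^{-1}J}\wt{g}) \circ A|^p = \tau_0^{-p}|\E_J g|^p$ exactly balance the $\tau_0^{p-3}$ above), and one obtains Lemma \ref{decoupling} with the constant of Lemma \ref{decoupling2}.

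The only point needing care is the displayed weight bound, which is the exact analogue of the step ``$\sum_\Box \ow_\Box(Tx)^{10} \lsm w_B(x)$'' at the end of the proof of Lemma \ref{parabolic_rescaling}, and is in fact easier here because $A$ is diagonal (there is no shear to absorb). Indeed $A^{-1}$ sends each $\Box$ to an axis-parallel $(\tau_0\ell(B)) \times \ell(B)$ rectangle, and the $O(\tau_0^{-1})$ of these form a finitely overlapping cover of $B$; since $\ow_\Box^{10}$ is the product of two one-dimensional weights at scale $\tau_{N+1}^{-2}$ with exponent $1000$, pulling back by $A$ turns $\ow_\Box^{10}(Ax)$ into the product weight of such a rectangle, and summing coordinatewise as in the proof of Lemma \ref{part} (which goes through verbatim with $100$ replaced by $1000$ since Lemma \ref{wbconvolve} holds with any such exponent, using Remark \ref{onedimconv}) gives $\sum_\Box \ow_\Box^{10}(Ax) \lsm \ow_B^{10}(x)$; then $\ow_B^{10} \le \ow_B \le w_B$ by Section \ref{weightsec}. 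I expect this weight bookkeeping, together with keeping track of the powers of $\tau_0$ so that they cancel, to be the only slightly delicate point; the rescaling identity and the covering and Minkowski steps are routine. Note finally that Lemma \ref{decoupling} carries no integrality hypotheses, consistently with Section \ref{choice}, and none are needed here since the finitely overlapping cover and the change of variables are available at arbitrary real scales.
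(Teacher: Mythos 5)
Your proposal is correct and is exactly the argument the paper intends: the paper's entire proof is the single sentence ``Rescaling $[0,1]$ to $[0,\tau_0]$ and using parallel decoupling, Lemma~\ref{decoupling2} implies the following,'' and you have correctly unpacked both ingredients (the anisotropic rescaling $A = \operatorname{diag}(\tau_0,\tau_0^2)$ that turns the circle phase into the phase of $\E'$, and the cover of the resulting $\tau_0^{-1}\tau_{N+1}^{-2}\times\tau_{N+1}^{-2}$ rectangle by $O(\tau_0^{-1})$ squares of side $\tau_{N+1}^{-2}$ followed by Minkowski), including the weight-summation step and the cancellation of the $\tau_0$ powers.
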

Observe that $\tau_{N + 1}^{-2} = \tau_{N}^{-3} \leq \delta^{-3}$.
Parallel decoupling allows us to increase the side length of $B$ in the above lemma to $\delta^{-1}\lceil\delta^{-2}\rceil\tau_{0}^{-2}= O_{\vep}(\delta^{-3 - \vep})$.
Note that $\lceil \delta^{-2} \rceil \tau_{0}^{-2}$ is an integer.

Having obtained a decoupling theorem, our next goal is to obtain the following exponential sum estimate which is an instance of square root cancellation.
\begin{prop}\label{srtc}
Let $A$ be a set of points $(x, y)$ with $x^{2} + y^{2} = 1$ and separated by distance at least $\delta$.
Then for $4 < p < 6$,
\begin{align*}
\nms{\sum_{a \in A}e^{2\pi i a \cdot z}}_{L^{p}_{\#}(B)} \leq \exp(O((\log\frac{1}{\delta})^{1 - \sigma_p}(\log\log\frac{1}{\delta})^{O(1)}))|A|^{1/2}
\end{align*}
for all squares $B$ of side length $\delta^{-1}\lceil\delta^{-2}\rceil\tau_{0}^{-2}$.
\end{prop}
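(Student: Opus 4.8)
The plan is to deduce Proposition \ref{srtc} from the decoupling estimate of Lemma \ref{decoupling} (extended to balls of side length $\delta^{-1}\lceil\delta^{-2}\rceil\tau_0^{-2}$ by parallel decoupling, as noted just before the statement) applied to the curve $(t, 1 - \sqrt{1-t^2})$, which parametrizes (a piece of) the unit circle. First I would reduce to a single arc: by symmetry of the circle (reflections across the axes and the diagonals) we may cover $A$ by $O(1)$ rotated/reflected copies of the arc $\{(t, 1-\sqrt{1-t^2}) : t \in [0, \tau_0]\}$, so up to a harmless constant and a change of coordinates it suffices to prove the estimate when all points of $A$ lie on this single arc; the exponential sum is then $\sum_{a\in A} e^{2\pi i a\cdot z} = (\E_{[0,\tau_0]} g)(z)$ for an appropriate choice of $g$ that is a sum of bump functions, one localized near each frequency $\xi_a \in [0,\tau_0]$ corresponding to $a \in A$.

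The main steps, in order, are as follows. Step one: write $\sum_{a\in A} e^{2\pi i a \cdot z}$ as an extension operator $\E_{[0,\tau_0]}g$ with $g = \sum_{a \in A}\phi_a$ where $\phi_a$ is an $L^1$-normalized bump at scale $\tau_{N+1}\tau_0$ centered at $\xi_a$; since the points of $A$ are $\delta$-separated and $\delta \le \tau_N = (\tau_{N+1}\tau_0)\cdot(\text{something})$ — here one must check that $\delta$ is at least the decoupling scale $\tau_{N+1}\tau_0$, which follows from $\tau_{N+1}^{-2} = \tau_N^{-3} \le \delta^{-3}$ hence $\tau_{N+1}\tau_0 \gtrsim \delta$ after tracking the $\tau_0$ factor — distinct $a$'s sit in distinct decoupling intervals $J \in P_{\tau_{N+1}\tau_0}([0,\tau_0])$. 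Step two: apply Lemma \ref{decoupling} (with the enlarged ball), giving
\begin{align*}
\nms{\E_{[0,\tau_0]}g}_{L^p(B)} \le \exp(O((\log\tfrac1\delta)^{1-\sigma_p}(\log\log\tfrac1\delta)^{O(1)}))\Big(\sum_{J}\nms{\E_J g}_{L^p(w_B)}^2\Big)^{1/2}.
\end{align*}
Step three: for each $J$ containing one frequency $\xi_a$, bound $\nms{\E_J g}_{L^p(w_B)}$ by its trivial estimate: $\E_J g$ has Fourier support in a $\tau_{N+1}\tau_0 \times (\tau_{N+1}\tau_0)^2$ cap, so by Bernstein / the reverse Hölder inequality (Lemma \ref{bridge}) and $L^1$-normalization of $\phi_a$ one gets $\nms{\E_J g}_{L^p_\#(w_B)} \lesssim 1$, i.e. $\nms{\E_J g}_{L^p(w_B)} \lesssim |B|^{1/p}$. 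Step four: there are $|A|$ nonempty caps, so $\big(\sum_J \nms{\E_J g}_{L^p(w_B)}^2\big)^{1/2} \lesssim |A|^{1/2}|B|^{1/p}$; dividing by $|B|^{1/p}$ converts $L^p(B)$ into $L^p_\#(B)$ and yields the claim (after absorbing the $O(1)$ from the reduction to a single arc into the exponential factor).

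The step I expect to be the main obstacle is the bookkeeping in Steps one and three: verifying that $\delta$ genuinely exceeds the decoupling scale $\tau_{N+1}\tau_0$ (so that the separation hypothesis on $A$ really does place each point in its own cap, with the $\tau_0$-rescaling between Lemma \ref{decoupling2} and Lemma \ref{decoupling} tracked correctly), and making the trivial per-cap bound genuinely uniform — in particular choosing the bumps $\phi_a$ so that $\E_J g$ restricted to a cap is controlled pointwise with an absolute constant, rather than one depending on $p$ or on the position of the cap. Keeping all constants $p$-independent is essential here since the target bound in \eqref{targetexpsum} has no $p$-dependence inside the exponential; fortunately Lemmas \ref{bridge} and \ref{decoupling} were both stated with absolute implied constants, so this should go through. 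The passage from the arc of the unit circle back to the lattice-point problem (replacing the normalized circle $x^2+y^2=1$ by $x^2+y^2=R$) is deferred to the proof of Theorem \ref{main2} and is not part of this proposition.
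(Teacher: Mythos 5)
Your overall strategy --- partition the circle into arcs, apply Lemma~\ref{decoupling} on each, bound each decoupled cap trivially, and sum --- is the same as the paper's, but there is a genuine gap in the reduction to a single arc. You claim that $O(1)$ rotated/reflected copies of the arc $\{(t,1-\sqrt{1-t^2}):t\in[0,\tau_0]\}$ suffice to cover $A$. This is false: $\tau_0$ is not a fixed constant but was chosen in Section~\ref{choice} as $\tau_0=C_0^{2\cdot 2^N}$, with $\tau_0^{-1}=\exp(O((\log\tfrac1\delta)^{\log_3 2}))\to\infty$ as $\delta\to0$, so covering the circle requires $\tau_0^{-1}$ arcs of length $\tau_0$, not boundedly many. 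After Cauchy--Schwarz in the arc index and using $\sum_{C_{\tau_0}}|A\cap C_{\tau_0}|=|A|$, this produces an extra factor of $\tau_0^{-1/2}$ which you must then check is absorbed by the decoupling constant; it is, because $\log_3 2\approx0.631<3/4\le 1-\sigma_p$ for all $4<p<6$, so $\tau_0^{-1/2}$ is dominated by $\exp(O((\log\tfrac1\delta)^{1-\sigma_p}))$. This is exactly what the paper's phrase ``from our choice of $\tau_0$ and the triangle inequality'' is packaging, and it is not free: if the exponent $\log_3 2$ coming from the Pramanik--Seeger scheme had been larger than $1-\sigma_p$ the reduction would have destroyed the bound.

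A smaller but real error: your verification that each $J\in P_{\tau_{N+1}\tau_0}([0,\tau_0])$ contains at most one $\xi_a$ has the inequality going the wrong way. You write $\tau_{N+1}\tau_0\gtrsim\delta$ and derive it from $\tau_{N+1}^{-2}=\tau_N^{-3}\le\delta^{-3}$, i.e.\ $\tau_{N+1}\ge\delta^{3/2}$. That is a \emph{lower} bound on $\tau_{N+1}$, which only makes the decoupling intervals larger and hence more likely to contain several points; it proves nothing here (the displayed inequality is in fact used in the paper only to control the size of $B$). What is needed is the \emph{upper} bound $\tau_{N+1}\tau_0\le\delta\tau_0<\delta$, which follows directly from the choice $\tau_{N+1}\le\delta\le\tau_N$ in Section~\ref{choice}, together with a short geometric estimate (carried out in the paper) that the chord of the circular arc over a base interval of length $\tau_{N+1}\tau_0$ has Euclidean length $<\delta$; that computation is needed because the arc is not flat --- its slope can be as large as $O(\tau_0)$ --- though it only contributes a factor of $1+O(\tau_0^2)$. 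The remaining steps (approximating $\sum_{a}1_{\xi=a}$ by $L^1$-normalized bumps, the trivial $O(1)$ per-cap bound, and the count of $|A\cap C_{\tau_0}|$ nonempty caps, all with $p$-independent constants) are correct and agree with the paper.
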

Since $\tau_0 \in \N^{-1}$, we can partition the unit circle into $\tau_{0}^{-1}$ many arcs $\{C_{\tau_0}\}$ of arc length $\tau_{0}^{-1}$.
From our choice of $\tau_0$ and the triangle inequality, it suffices to instead
prove the following result.
\begin{lemma}\label{red1}
Let $A$ be a set of points $(x, y)$ with $x^{2} + y^{2} = 1$ and separated by distance at least $\delta$.
Then for $4 < p < 6$ and each arc $C_{\tau_0}$
of arc length $\tau_0$, we have
\begin{align*}
\nms{\sum_{a \in A \cap C_{\tau_0}}e^{2\pi i a \cdot z}}_{L^{p}_{\#}(B)} \leq \exp(O((\log\frac{1}{\delta})^{1 - \sigma_p}(\log\log\frac{1}{\delta})^{O(1)}))|A \cap C_{\tau_0}|^{1/2}
\end{align*}
for all squares $B$ of side length $\delta^{-1}\lceil\delta^{-2}\rceil\tau_{0}^{-2}$.
\end{lemma}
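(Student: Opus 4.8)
The plan is to obtain Lemma~\ref{red1} from the decoupling inequality of Lemma~\ref{decoupling} --- which, as remarked just after its statement, holds for all squares $B$ of side length $R_B := \delta^{-1}\lceil\delta^{-2}\rceil\tau_0^{-2}$ --- by the standard device that turns an $l^2$ decoupling inequality into a square-root cancellation estimate for an exponential sum whose frequencies lie on the curve.

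\textbf{Reduction to a model arc.} First I would use the rotational symmetry of the circle. Given the arc $C_{\tau_0}$, choose a rotation $R$ so that the rigid motion $a \mapsto Ra + (0,1)$ carries $C_{\tau_0}$ into $\Gamma := \{(\xi, 1 - \sqrt{1 - \xi^2}) : \xi \in [0,\tau_0]\}$; this is possible because $C_{\tau_0}$ is an arc of angular width $\tau_0 < 1/100$ of the unit circle and $\Gamma$ is precisely the bottom arc of the unit circle translated by $(0,1)$. Writing $\gamma_a := Ra + (0,1) = (\xi_a, 1 - \sqrt{1-\xi_a^2}) \in \Gamma$, rotating the spatial variable (the accompanying unimodular phase does not affect absolute values) gives $\nms{\sum_{a \in A\cap C_{\tau_0}} e^{2\pi i a\cdot z}}_{L^p_\#(B)} = \nms{\sum_a e(\gamma_a\cdot z)}_{L^p_\#(RB)}$, so it suffices to bound $\nms{\sum_a e(\gamma_a\cdot z)}_{L^p_\#(B')}$ for all (possibly tilted) squares $B'$ of side $R_B$. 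Since a tilted square of side $R_B$ is covered by $O(1)$ axis-parallel squares of side $R_B$, it is enough to treat axis-parallel $B'$, which is exactly the setting of Lemma~\ref{decoupling}. Moreover, because rigid motions preserve distances and $\xi \mapsto (\xi, 1 - \sqrt{1-\xi^2})$ has speed in $[1,2]$ on $[0,\tau_0]$, the $\xi_a$ are separated by at least $\delta/2$; as $\tau_{N+1}\tau_0 \leq \delta\tau_0 < \delta/2$, each interval of $P_{\tau_{N+1}\tau_0}([0,\tau_0])$ then contains at most one $\xi_a$, and after a negligible translation of the configuration I may assume no $\xi_a$ lies at an endpoint of $[0,\tau_0]$ or on a partition boundary.

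\textbf{Realizing the sum as a limit of extension operators.} For small $\rho > 0$ set $g_\rho := \sum_a \rho^{-1} 1_{[\xi_a - \rho/2,\, \xi_a + \rho/2]} : [0,\tau_0] \to \C$. As $\rho \to 0^+$, $(\E_{[0,\tau_0]}g_\rho)(z) \to \sum_a e(\gamma_a\cdot z)$ uniformly for $z$ in the bounded set $B'$, so $\nms{\E_{[0,\tau_0]}g_\rho}_{L^p(B')} \to \nms{\sum_a e(\gamma_a\cdot z)}_{L^p(B')}$. If $J \in P_{\tau_{N+1}\tau_0}([0,\tau_0])$ contains a (then unique, interior) $\xi_a$, then for $\rho$ small $\E_J g_\rho \to e(\gamma_a\cdot z)$ pointwise with $\abs{\E_J g_\rho} \leq 1$, so dominated convergence gives $\nms{\E_J g_\rho}_{L^p(w_{B'})} \to (\int_{\R^2} w_{B'})^{1/p} \lsm \abs{B'}^{1/p}$; if $J$ contains no $\xi_a$ then $\E_J g_\rho = 0$ for $\rho$ small. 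Since $a \mapsto \xi_a$ is injective, the number of intervals $J$ meeting $\{\xi_a\}$ equals $\abs{A\cap C_{\tau_0}}$.

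\textbf{Conclusion.} Applying Lemma~\ref{decoupling} to $g_\rho$ and letting $\rho \to 0^+$ yields
\begin{align*}
\nms{\sum_a e(\gamma_a\cdot z)}_{L^p(B')} \leq \exp\big(O((\log\tfrac{1}{\delta})^{1-\sigma_p}(\log\log\tfrac{1}{\delta})^{O(1)})\big)\,\abs{B'}^{1/p}\,\abs{A\cap C_{\tau_0}}^{1/2}
\end{align*}
up to absolute constants; dividing by $\abs{B'}^{1/p}$ converts $L^p(B')$ into $L^p_\#(B')$, and undoing the rigid motion of the first step (absorbing the $O(1)$ covering loss) gives Lemma~\ref{red1}. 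The decoupling content is entirely in Lemma~\ref{decoupling}, so the only points needing care are that the rotation used to reach the model arc not conflict with the axis-parallel-square hypothesis of the decoupling theorem --- handled by the $O(1)$ covering above, or equivalently by working with the weights $w_{B'}$ --- and the passage to the limit in $\rho$, which only requires the $\xi_a$ to lie in the interiors of the partition intervals, arranged by the negligible translation.
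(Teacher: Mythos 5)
Your proof is correct and follows the same approach as the paper: reduce by rotational symmetry to the model arc with left endpoint at the origin, verify that each $\tau_{N+1}\tau_0$-length subinterval of $[0,\tau_0]$ contains at most one frequency $\xi_a$, and apply Lemma \ref{decoupling} with $g$ a smooth approximation of a sum of point masses at the $\xi_a$. You fill in more detail than the paper (the explicit $g_\rho \to$ point-mass limiting argument, and the $O(1)$ covering of the tilted square after rotation), and your separation argument via the speed bound $|p'|\in[1,2]$ is a clean alternative to the paper's direct chord-length estimate, but the underlying steps are the same.
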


Without loss of generality we may assume that $C_{\tau_0}$ is the circular arc of arc length $\tau_0$ with left endpoint at the origin.
Such an arch is given by $\{(\sin\ta, 1 - \cos\ta): \ta \in [0, \tau_0]\}$.
Recall
that $A$ is a $\delta$-separated set and $\sin\tau_0 \leq \tau_0$. Observe that if $\tau_0$ was sufficiently small (and so $\delta$ is smaller than
a sufficiently small absolute constant), a subarc of $C_{\tau_0}$ that lies above an interval of length $\tau_{N + 1}\tau_0 \leq \delta\tau_0$
has at most one point in $A$. Indeed, if $a \leq \tau_{0}(1 - \tau_{N + 1})$ and $\ell = \tau_{N + 1}\tau_0$, the distance
between the points $(a, 1 - \sqrt{1 - a^2})$ and $(a + \ell, 1 - \sqrt{1 - (a + \ell)^2})$ is
\begin{align*}
\ell(1 + (\frac{2a + \ell}{\sqrt{1 - a^2} + \sqrt{1 - (a + \ell)^2}})^{2})^{1/2} \leq \delta\frac{2\tau_0}{\sqrt{1 - \tau_{0}^{2}}} < \delta
\end{align*}
if $\tau_0$ is sufficiently small.

Thus Lemma \ref{red1} follows from Lemma \ref{decoupling}
if we pick $g$ to be a smooth approximation of $\sum_{a \in A \cap C_{\tau_0}}1_{\xi = a}$.

Having proven Proposition \ref{srtc} for all $4 < p < 6$ and the implied constant there is independent of $p$, we can use H\"{o}lder to obtain an $L^6$ estimate
assuming an extra condition about $|A|$.
\begin{lemma}\label{mainlemma}
Let $A$ be a set of points $(x, y)$ with $x^2 + y^2 = 1$ and separated by a distance at least $\delta$.
If $|A| > \exp((\log\frac{1}{\delta})^{1 - o(1)})$, then
\begin{align*}
\nms{\sum_{a \in A}e^{2\pi i a \cdot z}}_{L^{6}_{\#}([0,\delta^{-1}\lceil\delta^{-2}\rceil\tau_{0}^{-2}]^{2})} \ll |A|^{1/2 + o(1)}.
\end{align*}
\end{lemma}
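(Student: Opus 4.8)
The plan is to interpolate the $p$-uniform estimate of Proposition \ref{srtc} against the trivial $L^\infty$ bound, and then choose the exponent $p\in(4,6)$ as a function of $\delta$.

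Write $f(z):=\sum_{a\in A}e^{2\pi i a\cdot z}$ and let $B:=[0,\delta^{-1}\lceil\delta^{-2}\rceil\tau_0^{-2}]^2$, which is a square of the side length permitted in Proposition \ref{srtc}. First I would record the trivial bound $\nms{f}_{L^\infty(B)}\le|A|$, together with the elementary interpolation, valid for any $4<p<6$,
\begin{align*}
\nms{f}_{L^6_\#(B)} = \Big(\tfrac1{|B|}\int_B |f|^p\,|f|^{6-p}\Big)^{1/6} \le \nms{f}_{L^p_\#(B)}^{p/6}\,\nms{f}_{L^\infty(B)}^{1-p/6}
\end{align*}
(the constant here is exactly $1$). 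Inserting Proposition \ref{srtc} and $\nms{f}_{L^\infty(B)}\le|A|$, and using $p/6\le1$ to drop the exponent on the exponential factor, gives
\begin{align*}
\nms{f}_{L^6_\#(B)} \le \exp\!\Big(O\big((\log\tfrac1\delta)^{1-\sigma_p}(\log\log\tfrac1\delta)^{O(1)}\big)\Big)\,|A|^{1-p/12},
\end{align*}
where $\sigma_p=\tfrac14\big(1-\log_2\tfrac{p-2}2\big)$ and the implied constants are absolute — this is exactly why the $p$-independence of Proposition \ref{srtc} was needed.

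It remains to choose $p=p(\delta)\to6$. Solving $\sigma_p=\tfrac14(1-\log_2\tfrac{p-2}{2})$ for $p$ shows $6-p=O(\sigma_p)$ as $\sigma_p\to0$, hence $1-p/12=\tfrac12+O(\sigma_p)$; so any choice with $\sigma_{p(\delta)}\to0$ already makes the power of $|A|$ equal to $\tfrac12+o(1)$. For the exponential factor, the hypothesis $|A|>\exp((\log\tfrac1\delta)^{1-o(1)})$ furnishes a function $\kappa(\delta)\to0$ with $\log|A|\ge(\log\tfrac1\delta)^{1-\kappa(\delta)}$, while absorbing the $(\log\log\tfrac1\delta)^{O(1)}$ factor into a tiny power of $\log\tfrac1\delta$ costs only an extra $\mu(\delta):=O\!\big(\tfrac{\log\log\log(1/\delta)}{\log\log(1/\delta)}\big)\to0$ in the exponent. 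Choosing $p(\delta)\to6$ so that $\sigma_{p(\delta)}\to0$ but stays above $2(\kappa(\delta)+\mu(\delta))$ then yields
\begin{align*}
(\log\tfrac1\delta)^{1-\sigma_{p(\delta)}}(\log\log\tfrac1\delta)^{O(1)} \le (\log\tfrac1\delta)^{1-\sigma_{p(\delta)}+\mu(\delta)} = o\big((\log\tfrac1\delta)^{1-\kappa(\delta)}\big) = o(\log|A|),
\end{align*}
so the exponential-in-$\delta$ factor is $|A|^{o(1)}$, and combining with $1-p(\delta)/12=\tfrac12+o(1)$ gives $\nms{f}_{L^6_\#(B)}\ll|A|^{1/2+o(1)}$, as claimed.

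The only genuine subtlety — and the step I expect to require care — is this last balancing act: $p(\delta)$ must be pushed toward $6$ fast enough that $1-p/12=\tfrac12+o(1)$, yet slowly enough that $\sigma_{p(\delta)}$ outruns the $o(1)$ hidden in the lower bound on $|A|$ (plus the harmless $\log\log$ correction), so that the bad factor $\exp((\log\tfrac1\delta)^{1-\sigma_p+o(1)})$ is swallowed by $|A|^{o(1)}$. This is precisely the regime where the quantitative content of Theorem \ref{main}, propagated to the circle through Corollary \ref{acor} and Proposition \ref{srtc}, is being used: the exponent on $\log\tfrac1\delta$ is strictly below $1$ for every fixed $p<6$ and tends to $1$ only in the limit $p\to6$.
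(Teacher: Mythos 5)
Your proof is correct and follows essentially the same route as the paper: H\"older interpolation of $L^6_\#$ between $L^p_\#$ (handled by Proposition \ref{srtc}) and the trivial $L^\infty$ bound $|A|$, followed by optimizing the choice of $p$ (equivalently $\tau=\sigma_p$) against the hypothesis $\log|A|>(\log\frac{1}{\delta})^{1-o(1)}$. The paper phrases the final step as a minimization over $\tau$ of $\exp(c(\log\frac{1}{\delta})^{1-\tau})|A|^{C\tau}$, which is exactly the balancing you carry out explicitly.
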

\begin{proof}
Applying H\"{o}lder and Proposition \ref{srtc} shows that
\begin{align*}
\nms{\sum_{a \in A}e^{2\pi i a \cdot z}}_{L^{6}_{\#}([0,\delta^{-1}\lceil\delta^{-2}\rceil\tau_{0}^{-2}]^{2})} &\leq \nms{\sum_{a \in A}e^{2\pi i a \cdot z}}_{L^{p}_{\#}([0, \delta^{-1}\lceil\delta^{-2}\rceil\tau_{0}^{-2}]^{2})}^{p/6}|A|^{1 - \frac{p}{6}}\\
&\leq \exp(O((\log\frac{1}{\delta})^{1 - \sigma_{p}}(\log\log\frac{1}{\delta})^{O(1)}))|A|^{\frac{1}{2}(1- \frac{p}{6})}|A|^{1/2}.
\end{align*}
Therefore
\begin{align*}
\nms{\sum_{a \in A}e^{2\pi i a \cdot z}}_{L^{6}_{\#}([0, \delta^{-1}\lceil\delta^{-2}\rceil\tau_{0}^{-2})} \leq \min_{\tau > 0}(\exp(c(\log\frac{1}{\delta})^{1 - \tau})|A|^{C\tau})|A|^{1/2}
\end{align*}
for some absolute constants $C$ and $c$. The conclusion of Lemma \ref{mainlemma} then follows from the hypothesis that $\log|A| > (\log\frac{1}{\delta})^{1 - o(1)}$.
\end{proof}
%

\subsection{Proof of Theorem \ref{main2}}
Using Lemma \ref{mainlemma} and periodicity we now prove Theorem \ref{main2}.

Fix an arbitrary sufficiently large integer $R$. The set
of $(z_1, \ldots, z_6) \in (\Z + i\Z)^{6}$ such that $z\ov{z} = R$ and $z_1 + z_2 + z_3 = z_4 + z_5 + z_6$
is equal to
\begin{align*}
\int_{[0, 1]^{2}}&|\sum_{\st{|n| \leq \sqrt{R},  s \in \{\pm 1\}\\\sqrt{R - n^2} \in \N \cup \{0\}}}e^{2\pi i (nx + (\sqrt{R} + s \sqrt{R - n^{2}})y)}|^{6}\, dx\, dy\\
& = \int_{[0, 1]^{2}}|\sum_{\st{|n| \leq \sqrt{R},  s \in \{\pm 1\}\\\sqrt{R - n^2} \in \N \cup \{0\}}}e^{2\pi i (\frac{n}{\sqrt{R}}(\sqrt{R}x) + (1 + s \sqrt{1 - (\frac{n}{\sqrt{R}})^{2}})(\sqrt{R}y))}|^{6}\, dx\, dy\\
&= \frac{1}{R}\int_{[0, \sqrt{R}]^2}|\sum_{\st{|n| \leq \sqrt{R},  s \in \{\pm 1\}\\\sqrt{R - n^2} \in \N \cup \{0\}}}e^{2\pi i (\frac{n}{\sqrt{R}}x + (1 + s \sqrt{1 - (\frac{n}{\sqrt{R}})^{2}})y)}|^{6}\, dx\, dy.
\end{align*}
Taking conjugates and using the triangle inequality, it suffices to study
\begin{align*}
\frac{1}{R}\int_{[0, \sqrt{R}]^2}|\sum_{\st{|n| \leq \sqrt{R}\\\sqrt{R - n^2} \in \N \cup \{0\}}}e^{2\pi i (\frac{n}{\sqrt{R}}x + (1- \sqrt{1 - (\frac{n}{\sqrt{R}})^{2}})y)}|^{6}\, dx\, dy.
\end{align*}
Since the integrand is periodic with period $\sqrt{R}$ in both $x$ and $y$ variables, using periodicity the above is in fact equal to
\begin{align*}
\frac{1}{X^2}\int_{[0, X]^2}|\sum_{\st{|n| \leq \sqrt{R}\\\sqrt{R - n^2} \in \N \cup \{0\}}}e^{2\pi i (\frac{n}{\sqrt{R}}x + (1- \sqrt{1 - (\frac{n}{\sqrt{R}})^{2}})y)}|^{6}\, dx\, dy
\end{align*}
where $X = c\sqrt{R}$ for some integer $c$.
Applying Lemma \ref{mainlemma} with $\delta = 1/\sqrt{R}$ and $c = \lceil\delta^{-2}\rceil\tau_{0}^{-2}$ (where $\tau_0$ relates to $\delta$ as in Section \ref{choice})
then shows that the above integral is $\ll |A|^{3 + o(1)}$.
This completes the proof of Theorem \ref{main2}.

\bibliographystyle{amsplain}
\bibliography{2ddec_v5}

\end{document}